\pdfoutput=1
\documentclass[11pt]{article}
\PassOptionsToPackage{table}{xcolor}
\usepackage[margin=1in]{geometry}
\usepackage{amsmath,amssymb,amsthm}
\usepackage{mathtools}
\usepackage{newtxtext}
\usepackage{newtxmath}
\usepackage{bm}
\usepackage{microtype}
\usepackage{natbib}
\bibpunct[, ]{(}{)}{,}{a}{}{,}%
\usepackage[flushleft]{threeparttable}
\usepackage{graphicx}
\usepackage{float}
\usepackage{booktabs}
\usepackage{longtable}
\usepackage{multirow}
\usepackage{tabularx}
\usepackage{enumitem}
\usepackage{comment}
\usepackage{algorithm}
\usepackage[noend]{algpseudocode}
\usepackage{tikz}
\usetikzlibrary{arrows.meta,positioning,shapes.geometric,patterns,decorations.pathreplacing}
\usepackage{needspace}
\usepackage[normalem]{ulem}
\usepackage{hyperref}
\hypersetup{colorlinks=true,linkcolor=black,citecolor=black,urlcolor=black,
  pdftitle={A Robust Chance Constrained Approach to Surgery Scheduling},
  pdfauthor={Marco Caserta, Antonio Garcia Romero, Miguel Vaquero}}
\usepackage{caption}
\usepackage{subcaption}

\theoremstyle{plain}

\newtheorem{proposition}{Proposition}
\newtheorem{lemma}{Lemma}
\newtheorem{corollary}{Corollary}
\theoremstyle{definition}

\newtheorem{remark}{Remark}

\makeatletter
\AtBeginDocument{\@ifundefined{@maketablecaption}{\global\let\@maketablecaption\@makecaption}{}}
\makeatother

\let\proof\relax \let\endproof\relax
\long\def\proof#1{\par\medskip\noindent{\itshape #1}\enspace\ignorespaces}
\def\endproof{\par\medskip}
\providecommand{\Halmos}{\ensuremath{\square}}

\definecolor{hdrgray}{gray}{0.93} 
\definecolor{ieblue}{RGB}{0,51,102}
\definecolor{ieorange}{RGB}{255,102,0}
\definecolor{fluidblue}{RGB}{100,149,237}
\definecolor{fluidbluedark}{RGB}{65,105,225}
\definecolor{fluidbluelight}{RGB}{140,180,210}
\definecolor{pressurecolor}{RGB}{220,20,60}
\definecolor{pistoncolor}{RGB}{169,169,169}
\definecolor{pistondark}{RGB}{128,128,128}
\definecolor{successgreen}{RGB}{34,139,34}
\definecolor{reservoircolor}{RGB}{100,70,140}
\definecolor{reservoirlight}{RGB}{180,160,210}
\definecolor{cBlueF}{RGB}{230,241,251}   
\definecolor{cBlueD}{RGB}{24, 95,165}    
\definecolor{cGrayF}{RGB}{241,239,232}   
\definecolor{cGrayD}{RGB}{95, 94, 90}    
\definecolor{cAmbrF}{RGB}{250,238,218}   
\definecolor{cAmbrD}{RGB}{186,117, 23}   
\definecolor{cPurpF}{RGB}{238,237,254}   
\definecolor{cPurpD}{RGB}{ 83, 74,183}   
\definecolor{cTealF}{RGB}{225,245,238}   
\definecolor{cTealD}{RGB}{ 15,110, 86}   
\definecolor{cConn} {RGB}{120,118,112}   
\definecolor{ieblue}{RGB}{0,84,159}
\definecolor{ieorange}{RGB}{210,120,0}
\definecolor{successgreen}{RGB}{15,110,86}
\definecolor{cMenu}{RGB}{227,238,250}   
\definecolor{cMenuB}{RGB}{74,126,187}
\definecolor{cEng}{RGB}{224,242,235}    
\definecolor{cEngB}{RGB}{56,142,114}
\definecolor{cBuf}{RGB}{240,240,240}    
\definecolor{cBufB}{RGB}{120,120,120}
\definecolor{cMilp}{RGB}{253,239,219}   
\definecolor{cMilpB}{RGB}{214,143,47}
\definecolor{cOut}{RGB}{232,240,223}    
\definecolor{cOutB}{RGB}{106,142,68}
\definecolor{cMode}{RGB}{238,233,246}   
\definecolor{cModeB}{RGB}{126,103,172}

\providecommand{\Acal}{\mathcal A}
\newcommand{\ECappref}[1]{\hyperref[#1]{\ref*{#1}}}
\newcommand{\ECfigref}[1]{\hyperref[#1]{Figure~\ref*{#1}}}

\newcommand{\MV}[1]{#1}

\newcommand{\GRN}[1]{#1}

\newcommand{\CHGon}{}

\newcommand{\marginMV}[1]{}
\newcommand{\marginMC}[1]{}
\newcommand{\marginAG}[1]{}

\newcommand{\ECSwitch}{%
  \clearpage
  \setcounter{section}{0}\renewcommand{\thesection}{EC.\arabic{section}}%
  \def\theHsection{EC.\arabic{section}}%
  \setcounter{equation}{0}\renewcommand{\theequation}{EC.\arabic{equation}}%
  \def\theHequation{EC.\arabic{equation}}%
  \setcounter{figure}{0}\renewcommand{\thefigure}{EC.\arabic{figure}}%
  \def\theHfigure{EC.\arabic{figure}}%
  \setcounter{table}{0}\renewcommand{\thetable}{EC.\arabic{table}}%
  \def\theHtable{EC.\arabic{table}}%
  \setcounter{theorem}{0}\renewcommand{\thetheorem}{EC.\arabic{theorem}}%
  \def\theHtheorem{EC.\arabic{theorem}}%
  \setcounter{proposition}{0}\renewcommand{\theproposition}{EC.\arabic{proposition}}%
  \def\theHproposition{EC.\arabic{proposition}}%
  \setcounter{lemma}{0}\renewcommand{\thelemma}{EC.\arabic{lemma}}%
  \def\theHlemma{EC.\arabic{lemma}}%
  \setcounter{corollary}{0}\renewcommand{\thecorollary}{EC.\arabic{corollary}}%
  \def\theHcorollary{EC.\arabic{corollary}}%
  \setcounter{remark}{0}\renewcommand{\theremark}{EC.\arabic{remark}}%
  \def\theHremark{EC.\arabic{remark}}%
  \setcounter{definition}{0}\renewcommand{\thedefinition}{EC.\arabic{definition}}%
  \def\theHdefinition{EC.\arabic{definition}}%
  \setcounter{algorithm}{0}\renewcommand{\thealgorithm}{EC.\arabic{algorithm}}%
  \def\theHalgorithm{EC.\arabic{algorithm}}%
}
\newcommand{\ECHead}[1]{%
  \begin{center}
    {\Large\bfseries Electronic Companion}\\[6pt]
    {\large\bfseries #1}
  \end{center}
  \bigskip}

\title{A Robust Chance Constrained Approach to Surgery
Scheduling}

\author{Marco Caserta\\[2pt]
  \small IE University, Madrid, Spain\\
  \small\texttt{marco.caserta@ie.edu}
  \and
  Antonio Garc\'ia Romero\\[2pt]
  \small IE University, Madrid, Spain\\
  \small\texttt{antonio.garcia@ie.edu}
  \and
  Miguel Vaquero\\[2pt]
  \small IE University, Madrid, Spain\\
  \small\texttt{miguel.vaquero@ie.edu}}
\date{}

\begin{document}

\maketitle

\begin{abstract}
  \textbf{Problem definition:} We study elective surgery scheduling
  under uncertain procedure durations. Schedules based on mean
  durations or fixed buffering rules can appear efficient ex ante but
  become fragile in execution, because early overruns propagate
  through the day and expose later surgeries to accumulated delay.
  The goal is to design schedules that explicitly control delay risk
  while remaining tractable for practical hospital planning.
  \textbf{Methodology/results:} We propose a robust
  chance-constrained framework that separates uncertainty
  quantification from schedule optimization. A buffer engine converts
  distributional information into reliability-dependent buffered
  durations, and the scheduling model then chooses surgery
  assignments, sequencing decisions, start times, and surgery-level
  reliability levels from a discrete menu. This makes reliability an
  endogenous scheduling decision rather than a fixed service-level
  parameter. The framework accommodates average-reliability,
  worst-day, and hard-target risk postures. Computational comparisons
  with common-reliability and uniform proportional-buffer benchmarks
  show that the menu’s value comes from exploiting surgery-level
  heterogeneity: protection is allocated where it has the greatest
  operational value, reducing avoidable conservatism while preserving
  explicit reliability control. In a rolling-origin case study at HLA
  Moncloa Hospital in Madrid, Spain, across 10 instances with 45 to
  227 surgeries, the approach reduces delays exceeding 90 minutes by
  97\%, lowers the 95th-percentile delay by approximately 620
  minutes, and cuts total overtime by 42\% relative to a
  deterministic mean-based baseline. An analysis of realized
  schedules further shows that the position-dependent cascade of
  delays observed under deterministic scheduling is suppressed under
  the robust schedules. \textbf{Managerial implications:} By placing
  buffers according to uncertainty and operational exposure, the
  framework prevents early overruns from compounding through the
  schedule. Hospitals can use the approach to translate heterogeneous
  duration data and risk preferences into schedules that are both
  more reliable in execution and less conservatively buffered than
  one-size-fits-all rules.
\end{abstract}

\medskip
\noindent\textbf{Keywords:} healthcare operations; surgery scheduling; chance
constraints; distributionally robust optimization; cascade control

\medskip
\noindent\textbf{MSC 2020 subject classifications:} 90B35; 90C11; 90C15; 90C17; 90C90; 92C50

\vspace{1em}

\section{Introduction}

Operating room scheduling is one of the highest-leverage operational
decisions in hospitals. Each week, planners assign dozens to hundreds
of cases to operating rooms, days, and surgeons under tight capacity
and staffing constraints. When the plan executes as expected,
operating rooms run at high utilization. When it does not, the
operational costs are immediate: staff overtime, surgeon and patient
waiting, downstream disruptions (e.g., recovery beds and ancillary
services), and, in severe cases, cancellations or rescheduling.

A fundamental challenge is that surgery durations are uncertain and
often systematically miscalibrated. Even when planners use historical
averages and clinical judgment, realized durations exhibit
substantial variability and are frequently right-skewed. Two
operational failure modes follow. First, when cumulative overruns
push the day beyond the planned horizon, the hospital incurs overtime
and end-of-day congestion. Second, and often more disruptive, small
early overruns propagate through the room sequence, generating
cascading delays that disproportionately affect late-day cases and,
eventually, provoke case cancellations --- a concern that is
increasingly important as more hospitals are subject to value-based
payment schemes. Importantly, these disruptions are not well captured
by average performance metrics: two schedules with similar mean delay
can differ substantially in the frequency and severity of extreme delays.
The scheduling problem is therefore not only how much slack to add,
but how to allocate protection across the day so that early uncertainty
is absorbed before it cascades through the schedule.
This paper studies how hospitals should allocate slack in elective
surgery schedules to control execution risk while maintaining
operational efficiency. We ask: {\it how can a planner size and place
  buffers across surgeries so that the resulting schedule is reliable
  in execution---especially in the right tail of delays and end-of-day
  completion---without relying on ad hoc rules or a fully specified
duration distribution that is difficult to validate in practice?}

\subsection{A modular reliability menu for scheduling under uncertainty}

Our approach starts from a simple design principle: decouple the
estimation of reliability from the optimization of the schedule.
Specifically, we represent uncertainty via buffered durations: each
surgery is allocated its nominal duration plus a buffer calibrated to
ensure that the probability of exceeding the allocated time is at
most a chosen tail-risk level $\alpha$, as in
Figure~\ref{fig:buffer-concept}. Rather than imposing a single
reliability target for all cases, we introduce a discrete reliability
menu (the $\alpha$-menu). The scheduler chooses, for each surgery,
one reliability level from this menu and trades off (i) the
operational cost of allocating slack (idle time, overtime, and
sequencing effects) against (ii) the execution risk implied by smaller
buffers. The menu is discrete because it mirrors the small number of
service levels a planner can audit and because it keeps the resulting
formulation finite and transparent.

\begin{figure}[ht]
  \centering
  \resizebox{0.55\textwidth}{!}{%
    \begin{tikzpicture}[scale=0.9]
      \fill[ieblue!70] (0,0) rectangle (5,1.5);
      \node[white,font=\bfseries] at (2.5,0.75) {Mean duration};

      \fill[green!40] (5,0) rectangle (7,1.5);
      \node[font=\small] at (6,0.75) {Buffer};

      \fill[red!25] (7,0) rectangle (9,1.5);
      \node[font=\small,red!70!black] at (8,0.75) {Overrun};

      \begin{scope}[shift={(0,1.8)}]
        \draw[thick,ieblue,fill=ieblue!20] plot[smooth,tension=0.7]
        coordinates {
          (0.5,0) (1.2,0.15) (1.8,0.9) (2.2,1.6) (2.5,1.2) (3,0.7)
          (3.4,1.1) (3.8,2.3) (4.2,2.5) (4.5,1.8) (4.8,1.3)
          (5.2,1.5) (5.5,1.7) (5.8,1.4) (6.2,0.8) (6.5,0.45)
        (7,0.25) (7.4,0.18) (7.8,0.1) (8.3,0.04) (9,0.01) (9.5,0)};

        \node[font=\scriptsize,ieblue!80!black,align=center] at
        (4.75,0.75) {Distribution of\\ realized duration $X_j$};

        \draw[dashed,thick] (5,0) -- (5,2.6);
        \node[above,font=\small] at (5,2.6) {$\mu_j$};

        \draw[green!60!black,very thick] (7,0) -- (7,2.2);
        \node[above,font=\small,green!60!black] at (7,2.2) {$p_j$};

        \fill[red!40,opacity=0.6] plot[smooth,tension=0.7] coordinates {
        (7,0.25) (7.4,0.18) (7.8,0.1) (8.3,0.04) (9,0.01) (9.5,0)}
        -- (9.5,0) -- (7,0) -- cycle;

        \draw[<->,red,thick] (7,0.3) -- (8.5,0.3);
        \node[red,font=\small,above] at (7.75,0.3) {$\alpha$-tail};
      \end{scope}

      \draw[->,thick] (-0.5,0) -- (10,0) node[right] {Time};

      \draw[<->,thick] (0,-0.5) -- (5,-0.5);
      \node[below,font=\small] at (2.5,-0.5) {Mean: $\mu_j$};

      \draw[<->,thick,green!60!black] (5,-0.5) -- (7,-0.5);
      \node[below,font=\small,green!60!black] at (6,-0.5)
      {$b_j(\alpha)=p_j-\mu_j$};

      \node[right,font=\small] at (9,2.5) {$\Pr(X_j>p_j)\le \alpha$};

    \end{tikzpicture}%
  }

  \caption{The buffered-duration concept. The planned duration is
    $p_j=\mu_j+b_j(\alpha)$, where the buffer $b_j(\alpha)$ is sized so
  that $\Pr(X_j>p_j)\le \alpha$.}
  \label{fig:buffer-concept}
\end{figure}

Crucially, the scheduling model does not depend on the method used to
compute buffers. It takes buffered durations as input and remains
agnostic as to whether buffers come from simple moment information or
from richer historical samples. We formalize this separation via a
\emph{modular buffer engine}: any method that maps a reliability
level and available data to a buffered duration can be plugged into
the scheduling problem without changing the optimization model. This
architecture is attractive in practice because hospitals vary widely
in data availability and modeling sophistication, yet they face the
same operational need: an implementable knob that links uncertainty
information to schedule robustness.

For the validated implementation in this paper, we use a
moment-based engine derived from Cantelli's inequality: it requires
only estimated means and variances and avoids specifying a full
duration distribution. To show that the scheduling architecture is not
tied to this particular bound, we also provide Wasserstein
distributionally robust engines that can populate the same
$\alpha$-menu when richer historical samples and an ambiguity-radius
calibration are available. We therefore treat Wasserstein as an
extensibility demonstration rather than as the empirical benchmark in
this study; \ECappref{ec:buffer-engines} of the Electronic Companion
illustrates this plug-in interface in a controlled synthetic check, and
systematic engine comparison and ambiguity-radius
calibration are left for future work.

\subsection{Risk posture at the day level}
\label{sec:day-level-posture}

Hospitals differ in how they perceive and manage execution risk, and
those differences are shaped in part by how they are paid. Under
fee-for-service, the dominant incentive is throughput, and the
relevant risk metric is average performance. Under value-based or
bundled payment contracts --- increasingly common in the United
States, Spain, and the United Kingdom --- a single severely disrupted
day can compromise quality metrics measured at the episode level,
trigger contractual penalties, and damage relationships with payers;
the relevant risk object shifts from the average week to the worst
day. To reflect this institutional heterogeneity, we model day-level
reliability through three policy modes: M1, an {\it average-reliability} mode
appropriate when throughput incentives dominate; M2, a {\it worst-day} mode that
protects against the most disrupted day, natural under value-based or
reputationally sensitive environments; and M3, a {\it hard-target} mode that
enforces an explicit maximum violation probability, aligned with
SLA-driven or regulatory contexts. We return to the institutional
rationale for this menu in Section~\ref{sec:conclusions}.

\subsection{Contributions}
\label{sec:contributions}

First, we treat per-case reliability as a decision variable. A single
mixed-integer formulation chooses each surgery's tail-risk level
$\alpha$ from a discrete menu jointly with its assignment and
sequence, so reliability is set case by case as part of the
optimization. The mapping from a reliability level to a buffered
duration is handled by an interchangeable buffer engine, so the
scheduling model is agnostic to how buffers are computed: our
validated experiments use a moment-based Cantelli engine requiring
only means and variances, and we provide Wasserstein engines
($W_\infty$, $W_1$) as plug-in extensions for data-rich settings. A
sample-path feasibility result links the day-level log-budget --- the
sum of the selected log-reliability terms on a day --- to schedule
disruption probability under the corresponding dependence assumption;
proofs and constructions are deferred to the Electronic Companion.

Second, we cast three classical day-level risk postures --- M1
(average reliability), M2 (worst-day protection), and M3 (a hard
per-day chance constraint) --- within a single MILP through a common
log-reliability proxy, and we characterize analytically when
case-level reliability improves on a common rule. A fixed-schedule
relaxation in the Electronic Companion
(\ECappref{ec:menu-value-condensed}) shows that the three postures
reduce to the same variance-sorted allocation rule and differ only in
how they price each day. Relative to any common-reliability rule, the
menu's advantage is captured  by a scale-invariant
heterogeneity index that equals one when all cases share the same
duration variance and grows as variability concentrates in fewer
cases, giving a prediction of when case-level reliability
control is worthwhile. M1--M3 make differences in payment structure,
patient mix, and contractual exposure actionable in the scheduling
model.

Third, we evaluate the framework in a rolling-origin case study at HLA
Moncloa Hospital, a medium-sized private multispecialty hospital in
Madrid, Spain. Relative to the hospital's deterministic mean-based
baseline, the proposed approach reduces delays exceeding 90 minutes by
97\%, lowers the 95th-percentile delay by approximately 620 minutes,
and cuts total overtime by 42\% across 10 surgical schedules ranging
from 45 to 227 procedures. Because this is a single-site back-test,
these results should be interpreted as evidence of the underlying
mechanism rather than as universal performance estimates. A paired
regression on the realized schedules separates and quantifies two
mechanisms: per-surgery shock absorption and the flattening of
position-dependent delay propagation across the day. By concentrating
buffers on high-uncertainty procedures, variance-aware buffer
placement suppresses the cascade that would otherwise build through
the daily sequence and stabilizes end-of-day completion.

\section{Literature Review}
\label{sec:literature-review}

Surgical services occupy a central position in hospital operations:
they are where clinical value and financial performance converge most
directly, where resource intensity is highest, and where scheduling
failures generate system-wide delay and downstream congestion that
ripple across recovery, ancillary services, and inpatient capacity
\citep{Gupta2007}.

Foundational reviews organize this literature by planning stage---from
capacity planning to schedule construction, execution, and control---and
by technical characteristics, including solution methods and the treatment
of uncertainty. The aim is to
maximize surgical throughput while keeping service reliable for
patients \citep{Cardoen2010, MaySpanglerStrumVargas2011}. Two related streams
address parts of this aim without modeling duration uncertainty
directly. The first studies the joint optimization of weekly planning
and scheduling, showing that substantial gains in capacity are
achievable at fixed resources through better assignment and
sequencing, even in deterministic models
\citep{NaderiRoshanaeiBegenAlemanUrbach2021,
RoshanaeiLuongAlemanUrbach2017}. The second studies the dynamic
allocation of aggregate operating room capacity across surgical
specialties, accounting for the evolving urgency composition of their
waiting lists; \citet{CarewNagarajanShechterArnejaSkarsgard2021}
formulate this problem as a Markov decision process. These streams are
complementary to the daily scheduling problem we study: their decisions
concern how much capacity a service receives rather than how a given
day's cases are sequenced and buffered, so duration variability is not
managed through the schedule itself.

The literature most directly relevant to our work concerns
daily surgery scheduling under uncertainty, where stochastic and
robust formulations have progressively replaced expected-value
models. Early work incorporated uncertainty through several
distinct paradigms: \citet{LamiriXieDolguiGrimaud2008} plan operating
room capacity under random emergency demand with deterministic elective
durations; \citet{DentonMillerBalasubramanianHuschka2010} develop
two-stage stochastic and robust models for assigning surgeries with
uncertain durations; \citet{GulDentonFowlerHuschkaBard2011} evaluate
sequencing and appointment-time heuristics for an outpatient procedure
center through discrete-event simulation and a bi-criteria genetic
algorithm; and \citet{FreemanMeloukMittenthal2016} propose an
explicitly scenario-based operating-theater model with uncertain
elective durations and urgent arrivals. Chance-constrained approaches
subsequently introduced service level
guarantees directly into operating room planning and scheduling
models \citep{JebaliDiabat2017, AzarCarrascoMondschein2022}. A
parallel methodological line, originating with the
convex-optimization approach of moment-class probability
bounds presented in \citet{BertsimasPopescu2005}, has developed distributionally
robust optimization (DRO) tools that require only limited
distributional information. \citet{MakRongZhang2015} apply this
machinery to single-server appointment scheduling: they derive a
closed-form min-max-optimal time allowance under marginal-moment
ambiguity, prove that under a mild condition sequencing jobs by
increasing variance is
optimal, and articulate the mechanism by which delays at the
beginning of the day propagate downstream and motivate larger
allowances for earlier jobs. In a closely related moment-based
vein, \citet{ZhouParlarVerterFraser2021} minimize the expected time
span of a surgical session subject to a cap on each patient's expected
waiting time, and derive a closed-form, distribution-free error bound
for a simple sequencing rule. As with the allowance rules above, the
service guarantee is fixed ex ante and stated in expectation, rather
than being a per-case reliability level that the schedule itself
selects. In surgical settings, robust and distributionally robust
models have represented uncertainty through deterministic uncertainty
sets \citep{BandiGupta2020}, moment-and-support ambiguity sets
\citep{WangZhangTang2019, ShehadehPadman2021}, feature-informed
ambiguity sets \citep{WangZhangZhouTang2023}, and Wasserstein balls
\citep{Shehadeh2022Wasserstein, WangZhangTang2024}. Closest to our setting are
\citet{DengShenDenton2019}, who impose distributionally robust chance
constraints on waiting and overtime over a $\phi$-divergence ambiguity
set while choosing which rooms to open and how to allocate, sequence
and time cases; \citet{TsangEtAl2025}, who jointly optimize room and
anesthesiologist allocation, assignment, sequencing and scheduling
under both stochastic and distributionally robust formulations; and
\citet{FuQiYangYe2024}, who develop
a punctuality-index objective for elective sequencing and scheduling
under uncertainty, with an explicit distributionally robust extension.

Across these streams, uncertainty is represented through scenarios,
chance constraints, robust uncertainty sets, or distributional
ambiguity sets. In the closest formulations, however, the service
level, risk calibration, or robustness criterion is fixed before the
scheduling problem is solved or embedded in a fixed objective, and the
reliability actually delivered is an outcome rather than a controlled
quantity. Our formulation departs from this in three ways. First, each
surgery's reliability is selected from a discrete, interpretable menu
of tail-risk levels, so protection can differ case by case and mirror
the small set of service levels a planner can govern. Second, the
selected case-level guarantees combine into a day-level reliability
budget that a single MILP can enforce under three risk postures ---
average reliability, worst-day protection, or a hard daily target ---
while jointly deciding assignment, sequencing, and timing across rooms
and surgeons, rather than sequencing a single server. Third, we
characterize analytically when this case-level control is worthwhile:
relative to any common service level, its advantage is governed by a
scale-invariant measure of how unevenly duration variability is spread
across a day's caseload, a comparison none of the cited models make.
Because the reliability menu is separated from the engine that
computes buffers, the same formulation admits moment-based Cantelli or
data-driven Wasserstein inputs.

\section{Uncertainty Model and Reliability Menu}
\label{sec:uncertainty-model}

Our robust approach is built around a modular buffer engine: a
mapping that, given a reliability level and case-specific
information, produces a deterministic buffered duration.%

We work with a discrete reliability menu
$\mathcal{T}=\{1,\ldots,T\}$, with associated tail probabilities
$0<\alpha_1<\cdots<\alpha_T<1$ (smaller $\alpha_t$ is more conservative).
For each feasible assignment of surgery $j$ to operating room $r$ and
surgeon $k$, let $X_{jrk}$ denote the random duration (full index
sets and decision variables are introduced in Section~\ref{sec:model}).

A buffer engine maps $(\alpha_t,\text{available data})$ into a
buffered duration $p_{jrk}^{(t)}$ such that
\begin{equation}
  \label{eq:engine-guarantee}
  \Pr\!\bigl(X_{jrk} \le p_{jrk}^{(t)}\bigr)\;\ge\; 1-\alpha_t.
\end{equation}
The collection $\{p_{jrk}^{(t)}\}_{t\in\mathcal{T}}$ provides a menu
of planning times that spans a spectrum of conservatism: choosing a
smaller $\alpha_t$ increases protection but consumes more capacity.
The MILP scheduler selects one level $t$ for each scheduled case,
trading off efficiency against execution risk. Importantly, the
scheduling formulation is buffer-agnostic: it uses
$\{p_{jrk}^{(t)}\}$ as inputs regardless of which engine produced
them. We later translate these case-level guarantees into day-level
reliability controls using aggregate budget constraints.

\begin{figure}[ht]
  \centering
  \resizebox{0.9\textwidth}{!}{%
    \begin{tikzpicture}[
        font=\small,
        box/.style={draw, rounded corners=2pt, align=center, inner sep=5pt,
        minimum height=14mm, line width=0.7pt},
        mode/.style={draw=cModeB, fill=cMode, rounded corners=2pt, align=center,
          inner sep=4pt, font=\footnotesize, text width=20mm,
        minimum height=11mm, line width=0.7pt},
        arr/.style={-{Latex[length=2.4mm]}, semithick, black!60},
        lbl/.style={font=\scriptsize\itshape, text=cModeB}
      ]

      \node[box, draw=cMenuB, fill=cMenu, text width=25mm] (menu)
      {$\alpha$-menu\\[2pt] $\{0.01,\ldots,0.10\}$};
      \node[box, draw=cEngB, fill=cEng, right=8mm of menu, text
      width=23mm] (engine)
      {Buffer engine\\[2pt] Cantelli, $W_\infty$, or $W_1$};
      \node[box, draw=cBufB, fill=cBuf, right=8mm of engine, text
      width=21mm] (buf)
      {Buffered durations\\[2pt] $p^{(t)}_{jrk}$};
      \node[box, draw=cMilpB, fill=cMilp, right=8mm of buf, text
      width=27mm] (milp)
      {MILP scheduler\\[2pt] assignment, sequence,\\ \mbox{reliability level}};
      \node[box, draw=cOutB, fill=cOut, right=8mm of milp, align=center] (out)
      {Day-level reliability\\[3pt]
        $\displaystyle\prod_{j\in\mathcal{S}_d}\!(1-\alpha_j)\ge
      1-\varepsilon$\quad$\forall d$};

      \draw[arr] (menu) -- (engine);
      \draw[arr] (engine) -- (buf);
      \draw[arr] (buf) -- (milp);
      \draw[arr] (milp) -- (out);

      \node[mode, below=10mm of milp] (m2)
      {\textbf{M2} worst-day\\ risk averse};
      \node[mode, left=4mm of m2]  (m1)
      {\textbf{M1} average\\ cost sensitive};
      \node[mode, right=4mm of m2] (m3)
      {\textbf{M3} fixed\\ regulatory compliance};

      \draw[arr, cModeB] (m1.north) -- ([xshift=-9mm]milp.south);
      \draw[arr, cModeB] (m2.north) -- (milp.south);
      \draw[arr, cModeB] (m3.north) -- ([xshift=9mm]milp.south);

      \node[lbl, below=1.5mm of m2] {risk posture $\Psi(\cdot)$};

    \end{tikzpicture}
  }
  \caption{Modular architecture. A buffer engine maps an
    $\alpha$-menu level and case data to buffered durations, which the
  MILP scheduler consumes as parameters.}
  \label{fig:modular-architecture}
\end{figure}

We next present three engines---a moment-based Cantelli
engine and two data-driven Wasserstein engines---that instantiate
\eqref{eq:engine-guarantee} in different information environments.

\subsection{Buffer Engine 1: Cantelli (Moment-Based)}
\label{sec:cantelli-engine}

We assume that, for each case, we can estimate the mean $\mu$ and
variance $\sigma^2$ of its duration from historical records or expert
judgment; the Cantelli engine then constructs buffered durations
using only these two moments, without imposing any distributional form.
\begin{lemma}[Cantelli's Inequality]
  \label{lem:cantelli}
  Let $X$ be a real random variable with mean $\mu$ and variance
  $\sigma^2 \in (0,\infty)$. Then for any $a>0$,
  \begin{equation}
    \label{eq:cantelli}
    \Pr(X-\mu \ge a) \;\le\; \frac{\sigma^2}{\sigma^2+a^2}.
  \end{equation}
  Equivalently,
  \begin{equation}
    \label{eq:cantelli-equiv}
    \Pr(X \le \mu+a) \;\ge\; \frac{a^2}{\sigma^2+a^2}.
  \end{equation}
\end{lemma}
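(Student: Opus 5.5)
The plan is the standard shift-and-Markov argument. Write $Y = X-\mu$, so $\mathbb{E}[Y]=0$ and $\mathbb{E}[Y^2]=\sigma^2$. For an arbitrary parameter $u\ge 0$, the event $\{Y\ge a\}$ is contained in $\{Y+u\ge a+u\}$. On this event $a+u>0$, so it is also contained in $\{(Y+u)^2\ge (a+u)^2\}$. Markov's inequality applied to the nonnegative variable $(Y+u)^2$ then gives
\begin{equation*}
  \Pr(Y\ge a)\;\le\;\frac{\mathbb{E}[(Y+u)^2]}{(a+u)^2}\;=\;\frac{\sigma^2+u^2}{(a+u)^2},
\end{equation*}
where the expectation expands using $\mathbb{E}[Y]=0$, since the cross term vanishes.

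The second step is to optimize over $u\ge 0$. Differentiating $g(u)=(\sigma^2+u^2)/(a+u)^2$ shows that the minimizer is $u^\ast=\sigma^2/a>0$. Substituting it back gives
\begin{equation*}
  g(u^\ast)=\frac{\sigma^2+\sigma^4/a^2}{(a+\sigma^2/a)^2}=\frac{\sigma^2(a^2+\sigma^2)/a^2}{(a^2+\sigma^2)^2/a^2}=\frac{\sigma^2}{\sigma^2+a^2},
\end{equation*}
which is exactly \eqref{eq:cantelli}. This optimization is the only step with any real content: a naive choice such as $u=0$ yields only the weaker Chebyshev-type bound $\sigma^2/a^2$. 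Since any $u\ge0$ is valid, there is no need to prove optimality. It suffices to plug in $u^\ast$ directly.

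For the equivalent form \eqref{eq:cantelli-equiv}, I pass to complements. First, $\{X\le \mu+a\}\supseteq\{X-\mu<a\}$, and $\Pr(X-\mu<a)=1-\Pr(X-\mu\ge a)$. Then
\begin{equation*}
  \Pr(X\le\mu+a)\;\ge\;1-\frac{\sigma^2}{\sigma^2+a^2}\;=\;\frac{a^2}{\sigma^2+a^2}.
\end{equation*}
The hypotheses $\sigma^2\in(0,\infty)$ and $a>0$ guarantee that all denominators are positive and that the second moment is finite, which Markov's inequality requires.
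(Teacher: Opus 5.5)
Your proof is correct and complete. The shifted Markov bound $\Pr(Y\ge a)\le(\sigma^2+u^2)/(a+u)^2$ holds for every $u\ge0$, the choice $u^\ast=\sigma^2/a$ gives exactly $\sigma^2/(\sigma^2+a^2)$, and the complement step via $\{X-\mu<a\}\subseteq\{X\le\mu+a\}$ delivers the second form.

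The paper does not prove the lemma at all. It calls it classical, omits the argument, and only records in Appendix~\ref{ec:cantelli} the two-point law with atoms $\mu-\sigma^2/a$ and $\mu+a$ to show tightness. Your argument therefore supplies the step the paper leaves out rather than competing with one.

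The two pieces fit together neatly. Under that extremal law, $Y+u^\ast$ takes only the values $0$ and $a+u^\ast$, so both your event inclusion and Markov's inequality hold with equality. Hence the bound you obtain is attained. No other choice of $u$, and indeed no argument using only the first two moments, can improve it.
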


\noindent
Lemma~\ref{lem:cantelli} is classical; we omit its standard proof and record
only the extremal distribution that shows the bound is tight in \ECappref{ec:cantelli}.

\begin{corollary}[Buffer achieving tail probability $\alpha$]
  \label{cor:buffer}
  Fix $\alpha\in(0,1)$. If $X$ has mean $\mu$ and standard deviation
  $\sigma>0$, then with $a=\sigma\sqrt{(1-\alpha)/\alpha}$ we obtain
  $\Pr(X \le \mu+a)\ge 1-\alpha$.
\end{corollary}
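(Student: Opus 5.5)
The plan is to apply Lemma~\ref{lem:cantelli} in its equivalent form \eqref{eq:cantelli-equiv} with the specific choice $a=\sigma\sqrt{(1-\alpha)/\alpha}$, and then simplify the resulting lower bound. Since the hypotheses of the lemma are $\sigma^2\in(0,\infty)$ and $a>0$, I first check that this $a$ is admissible. Because $\alpha\in(0,1)$, the ratio $(1-\alpha)/\alpha$ is strictly positive and finite, so its square root is a well-defined positive real. Multiplying by $\sigma>0$ gives $a>0$. Finiteness of the variance is implicit in ``$X$ has standard deviation $\sigma$''.

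Next I compute the bound. Squaring gives $a^2=\sigma^2(1-\alpha)/\alpha$, so
\[
\sigma^2+a^2=\sigma^2\Bigl(1+\tfrac{1-\alpha}{\alpha}\Bigr)=\frac{\sigma^2}{\alpha},
\]
and therefore
\[
\frac{a^2}{\sigma^2+a^2}=\frac{\sigma^2(1-\alpha)/\alpha}{\sigma^2/\alpha}=1-\alpha .
\]
Substituting into \eqref{eq:cantelli-equiv} yields $\Pr(X\le \mu+a)\ge 1-\alpha$, which is the claim. As an alternative route, one can apply \eqref{eq:cantelli} directly to get $\Pr(X-\mu\ge a)\le \sigma^2/(\sigma^2+a^2)=\alpha$. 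The event $\{X>\mu+a\}$ is contained in $\{X-\mu\ge a\}$, so $\Pr(X>\mu+a)\le\alpha$, and taking complements gives the same conclusion.

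There is no real obstacle here. The only points needing care are confirming $a>0$, which is where $\alpha<1$ and $\sigma>0$ are used, and, if one goes through \eqref{eq:cantelli}, handling the weak versus strict inequality in the complement step. The first route via \eqref{eq:cantelli-equiv} avoids that issue entirely.
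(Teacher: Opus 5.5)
Your proposal is correct and follows the route the paper intends: the paper states the corollary without a separate proof, as an immediate consequence of Lemma~\ref{lem:cantelli}, obtained by substituting $a=\sigma\sqrt{(1-\alpha)/\alpha}$ into \eqref{eq:cantelli-equiv}. Your check that $a>0$ and your simplification $a^2/(\sigma^2+a^2)=1-\alpha$ are exactly what is needed, and they match the identity $\alpha_j=\sigma_j^2/(\sigma_j^2+b_j^2)$ that the paper uses in the Electronic Companion.
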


For each feasible $(j,r,k)$ and each menu level $t\in\mathcal{T}$,
the Cantelli engine sets $p_{jrk}^{(t)} = \mu_{jrk} +
\sqrt{(1-\alpha_t)/\alpha_t}\,\sigma_{jrk}$.
By Corollary~\ref{cor:buffer}, this choice satisfies the engine
guarantee \eqref{eq:engine-guarantee} for any duration distribution
with the specified mean and variance. Note that Cantelli buffering is
distribution-free but can be conservative because it protects against
worst-case distributions in the moment class.

\subsection{Buffer Engine 2: Wasserstein-$W_\infty$ (Max-Shift)}
\label{sec:Winf-engine}

Given $N$ historical durations $x_1\le \cdots \le x_N$ for a case and an
ambiguity radius $\rho\ge 0$, the Wasserstein-$W_\infty$ ambiguity set contains
every distribution $Q$ obtainable from the empirical
$\hat P_N=\frac{1}{N}\sum_{i=1}^N \delta_{x_i}$ by moving no unit of probability
mass more than $\rho$ (equivalently, $W_\infty(Q,\hat P_N)\le\rho$). The worst
case is a uniform right-shift of the sample by $\rho$, which yields a
closed-form
buffered duration.

\begin{proposition}[$W_\infty$ buffered duration]
  \label{prop:Winf-buffer}
  Let $\hat Q_{1-\alpha}:=\inf\{t:\hat F_N(t)\ge 1-\alpha\}$ be the empirical
  $(1-\alpha)$-quantile of $\hat P_N$ (left-quantile convention). Then the
  smallest buffered duration $p$ satisfying
  $\sup_{Q:\,W_\infty(Q,\hat P_N)\le \rho} Q(X>p)\le\alpha$ is
  \begin{equation}
    \label{eq:Winf-buffer}
    p^{(t)} = \hat Q_{1-\alpha_t} + \rho.
  \end{equation}
\end{proposition}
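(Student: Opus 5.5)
The plan is to compute the worst-case tail function $G(p):=\sup_{Q:\,W_\infty(Q,\hat P_N)\le\rho} Q(X>p)$ in closed form, and then read off the smallest $p$ with $G(p)\le\alpha$.

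\emph{Step 1 (upper bound via couplings).} Take any feasible $Q$. Because $W_\infty(Q,\hat P_N)\le\rho$ and the infimum over couplings is attained (the support of $\hat P_N$ is finite, so compactness applies), there is a coupling $(X,\hat X)$ with $X\sim Q$, $\hat X\sim\hat P_N$, and $|X-\hat X|\le\rho$ almost surely. On the event $\{X>p\}$ we then have $\hat X\ge X-\rho>p-\rho$. Hence
\[
Q(X>p)\le \hat P_N(\hat X>p-\rho)=1-\hat F_N(p-\rho).
\]
If one prefers not to rely on attainment, the same bound follows for every $\rho'>\rho$. The resulting right-continuity issues are handled in Step 3.

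\emph{Step 2 (attainment).} Let $Q^\star$ be the law of $\hat X+\rho$. This is feasible, since each atom moves by exactly $\rho$, and it satisfies
\[
Q^\star(X>p)=\hat P_N(\hat X>p-\rho)=1-\hat F_N(p-\rho).
\]
Therefore $G(p)=1-\hat F_N(p-\rho)$ exactly.

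\emph{Step 3 (inversion).} The condition $G(p)\le\alpha$ is equivalent to $\hat F_N(p-\rho)\ge 1-\alpha$. Since $\hat F_N$ is a right-continuous, nondecreasing step function, the set $\{s:\hat F_N(s)\ge1-\alpha\}$ is the closed half-line $[\hat Q_{1-\alpha},\infty)$ under the left-quantile convention. The feasible set of $p$ is therefore $[\hat Q_{1-\alpha}+\rho,\infty)$, and its minimum is $\hat Q_{1-\alpha}+\rho$, which is \eqref{eq:Winf-buffer} with $\alpha=\alpha_t$.

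\emph{Main obstacle.} The delicate point is Step 1, specifically the rigorous use of the $W_\infty$ definition (essential sup of the transport cost) to obtain an almost-sure bound. I would handle it by invoking existence of an optimal coupling for $W_\infty$ (known for Polish spaces). Alternatively, I would argue with $\rho+\epsilon$ couplings, obtain $Q(X>p)\le 1-\hat F_N(p-\rho-\epsilon)$, and let $\epsilon\downarrow0$. This second route requires care, because $\hat F_N$ is only right-continuous: the limit gives $1-\hat F_N((p-\rho)^-)$, which can be strictly larger at atoms. So the attainment route is needed to get the exact strict inequality $X>p$ rather than $X\ge p$. Since $\hat P_N$ has finite support, optimal couplings reduce to a finite-support problem, and closedness of the set of couplings with cost at most $\rho+\epsilon$, combined with weak compactness, settles attainment.
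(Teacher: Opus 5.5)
Your proof is correct and follows the paper's route: the worst case over the $W_\infty$-ball is the uniform right-shift of $\hat P_N$ by $\rho$, so the worst-case tail equals $1-\hat F_N(p-\rho)$, and you then invert using the left-quantile convention. Your Step 1 coupling bound simply makes rigorous the paper's one-line claim that each atom $x_i$ can only move within $[x_i-\rho,x_i+\rho]$; one small correction to your closing remark is that attainment of an optimal coupling is not actually needed, since a coupling with $|X-\hat X|\le\rho+\delta$ a.s.\ gives $Q(X>p+\delta)\le 1-\hat F_N(p-\rho)$, and letting $\delta\downarrow 0$ (continuity of $Q$ from below) gives the sharp bound directly.
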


Computing \eqref{eq:Winf-buffer} requires only sorting the samples
and reading off
a quantile: it is as if every observed duration could be up to $\rho$ minutes
longer than in the historical record. A short proof is in
\ECappref{ec:winf}.

\subsection{Buffer Engine 3: Wasserstein-$W_1$ (Average-Shift)}
\label{sec:W1-engine}

The Wasserstein-$W_1$ engine uses the same samples and radius but bounds the
\emph{average} displacement rather than the maximum: $W_1(Q,\hat P_N)\le\rho$
permits moving a small amount of mass far, provided the mass-weighted average
displacement stays within $\rho$. The worst-case exceedance probability above a
threshold $\tau$, $\overline{\Phi}_\rho(\tau):=\sup_{Q:\,W_1(Q,\hat P_N)\le\rho}
Q(X>\tau)$, is a fractional-knapsack value.

\begin{proposition}[Worst-case tail over a $W_1$-ball]
  \label{prop:W1-tail}
  Let $x_1\le \cdots \le x_N$ and $\rho> 0$, and set
  $I(\tau):=\{\,i: x_i\le \tau\,\}$. For any $\tau\ge 0$,
  \begin{equation}
    \label{eq:W1-knapsack}
    \begin{aligned}
      \overline{\Phi}_\rho(\tau)
      =&\; \frac{1}{N}\sum_{i=1}^N \mathbf{1}\{x_i>\tau\}
      \;+\; \frac{1}{N}\max_{z}\ \sum_{i\in I(\tau)} z_i \\[2pt]
      \text{s.t. }&
      \sum_{i\in I(\tau)} (\tau-x_i)\,z_i \le N\rho,\quad
      0\le z_i \le 1 .
    \end{aligned}
  \end{equation}
  The maximization is a fractional knapsack, solved greedily by spending the
  transport budget on the samples closest to $\tau$ from below; given sorted
  samples it evaluates in $O(N)$ time.
\end{proposition}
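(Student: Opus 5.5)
The plan is to split the supremum into a \emph{reduction to transports from the empirical atoms} and a \emph{linear-programming characterization} of which mass can be pushed above $\tau$. Any $Q$ with $W_1(Q,\hat P_N)\le\rho$ is the second marginal of a coupling $\pi$ of $\hat P_N$ and $Q$ with $\int|x-y|\,d\pi\le\rho$; conversely every such coupling gives an admissible $Q$. Disintegrating $\pi$ over the $N$ atoms, each sample $x_i$ carries mass $1/N$ that is spread as a conditional law $\pi_i$. Then
\[
Q(X>\tau)=\frac1N\sum_i \pi_i\bigl((\tau,\infty)\bigr),\qquad \frac1N\sum_i\int|y-x_i|\,d\pi_i(y)\le\rho.
\]
The supremum is therefore over families $(\pi_i)$ subject to one aggregate cost constraint, and I will show it equals the right-hand side of \eqref{eq:W1-knapsack}.

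\textbf{Upper bound.} For $x_i>\tau$, bound $\pi_i((\tau,\infty))\le 1$; this produces the first sum. For $i\in I(\tau)$, let $z_i:=\pi_i((\tau,\infty))\in[0,1]$. Mass moved from $x_i\le\tau$ to a point $y>\tau$ travels at least $\tau-x_i$, so
\[
\int|y-x_i|\,d\pi_i\ \ge\ (\tau-x_i)\,z_i .
\]
Summing and using the budget gives $\sum_{i\in I(\tau)}(\tau-x_i)z_i\le N\rho$, so $(z_i)$ is feasible for the knapsack and $Q(X>\tau)$ is at most the stated value.

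\textbf{Achievability.} This is the step I expect to be the main obstacle, because the strict inequality $X>\tau$ means moving mass exactly to $\tau$ does not count. Take an optimal $z^\star$. For $\eta>0$, move fraction $z_i^\star$ of atom $i$ to $\tau+\eta$ and keep everything else in place. The cost is
\[
\frac1N\sum_i(\tau+\eta-x_i)\,z_i^\star\ \le\ \rho+\eta\,\frac{\sum_i z_i^\star}{N}.
\]
To restore feasibility, scale the $z^\star$ down slightly, for instance by $N\rho/(N\rho+\eta\sum_i z_i^\star)$. Then let $\eta\downarrow0$; the achieved value converges to the knapsack value, which proves that the supremum equals it. Here $\rho>0$ matters: when the budget is slack the scaling is unnecessary. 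A tie at $x_i=\tau$ costs $0$ per unit but still needs $\eta$, and the same limit argument handles it. I will note that the statement is about the supremum, which is not necessarily attained.

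\textbf{Greedy solution and complexity.} The maximization is a fractional knapsack in which every item has value $1$ and weight $\tau-x_i$. The standard exchange argument, or LP duality with a single constraint, shows the optimum fills items in increasing order of weight, with at most one fractional item. Increasing weight means decreasing $x_i$ starting from the largest sample $\le\tau$, i.e.\ the samples closest to $\tau$ from below. With sorted samples, locating $I(\tau)$ and making one pass that accumulates $\tau-x_i$ until the budget $N\rho$ is exhausted takes $O(N)$ time.
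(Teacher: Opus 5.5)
Your proposal is correct and follows essentially the same route as the paper. Both arguments bound the extra tail mass from atoms at or below $\tau$ through the transport cost $(\tau-x_i)z_i$ to obtain the fractional-knapsack upper bound, approach that value by moving the optimal fractions to $\tau+\eta$ with $\eta\downarrow 0$, and solve the knapsack greedily. Your rescaling by $N\rho/(N\rho+\eta\sum_i z_i^\star)$ is a welcome refinement: the paper's ``let $\delta\downarrow0$'' step glosses over the fact that moving the optimal fractions to $\tau+\delta$ exceeds a binding budget, and over the fact that the supremum is only approached, not attained.
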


The proof---which transports mass from just below $\tau$ into the
tail---is given
in \ECappref{ec:w1-proof}. The $W_1$ buffered duration is the smallest
threshold whose worst-case tail does not exceed $\alpha_t$,
\begin{equation}
  \label{eq:W1-buffer}
  p^{(t)}=\inf\{\tau\ge 0:\overline{\Phi}_\rho(\tau)\le \alpha_t\},
\end{equation}
computed by bisection on $\tau\in[x_1,\,x_N+\rho/\alpha_t]$ (the
  upper end because
$\overline{\Phi}_\rho(\tau)\le\rho/(\tau-x_N)$ for $\tau>x_N$).

\begin{remark}[Role and calibration of $\rho$]
  \label{rem:rho-role}
  The radius $\rho$ controls robustness: $\rho=0$ recovers the
  empirical-quantile engines, and larger $\rho$ weakly inflates buffered
  durations to hedge sampling error or distributional shift. Its meaning is
  engine-specific ($W_\infty$ caps the maximum displacement, $W_1$ the average),
  so it should be calibrated separately per engine---e.g., by out-of-sample
  validation, as \ECappref{ec:buffer-engines} does before inserting the
  buffered durations into the same scheduling model.
\end{remark}

Table~\ref{tab:buffer-engine-comparison} summarizes and contrasts the
three buffer engines.  They differ only in how the buffered durations
\(p^{(t)}_{jrk}\) are produced; once these values are computed, the same
mixed-integer scheduling model applies.

\begin{table}[ht]
  \centering
  \caption{Comparison of the three buffer engines.}
  \label{tab:buffer-engine-comparison}
  \small
  \setlength{\tabcolsep}{6pt}
  \renewcommand{\arraystretch}{1.25}
  \begin{tabular}{@{} >{\raggedright\arraybackslash}p{0.13\linewidth}
      >{\raggedright\arraybackslash}p{0.17\linewidth}
      >{\raggedright\arraybackslash}p{0.34\linewidth}
    >{\raggedright\arraybackslash}p{0.24\linewidth} @{}}
    \toprule
    \rowcolor{hdrgray}\textbf{Engine} & \textbf{Required input} & \textbf{Worst-case
    interpretation} & \textbf{Main tradeoff} \\
    \midrule
    Cantelli (moment-based)
    & Mean and variance $(\mu,\sigma^2)$
    & Distribution-free bound: protects against every law with the
    given first two moments
    & Broadly applicable, but conservative \\
    $W_\infty$ (max-displacement)
    & Samples and radius $\rho$
    & Caps the maximum displacement of probability mass; upper-tail
    worst case is a uniform right-shift by $\rho$
    & Simple max-shift reading; sensitive to worst-case pointwise shift \\
    $W_1$ (average-displacement)
    & Samples and radius $\rho$
    & Caps the mass-weighted average displacement; the adversary
    budgets transport to stress the upper tail
    & More data-adaptive tail stress; less interpretable than $W_\infty$ \\
    \bottomrule
  \end{tabular}
\end{table}

The buffer engines deliver per-surgery tail guarantees, but
hospitals evaluate reliability at the day level.
Section~\ref{sec:model} closes this gap: it aggregates per-surgery
tail probabilities into a day-level reliability constraint and
presents the MILP, with three risk-posture variants (M1, M2, M3)
for alternative day-level reliability targets.

\section{Models Description and Formulations}
\label{sec:model}

Table~\ref{tab:notation} summarizes the notation. For compactness,
define $\mathcal{V}_j:=\{(d,r,k,t):(j,d,r,k,t)\in\mathcal{V}\}$,
$\mathcal{V}_{jdr}:=\{(k,t):(j,d,r,k,t)\in\mathcal{V}\}$,
$\mathcal{V}_{jdk}:=\{(r,t):(j,d,r,k,t)\in\mathcal{V}\}$,
$\mathcal{V}_d:=\{(j,r,k,t):(j,d,r,k,t)\in\mathcal{V}\}$.

{\footnotesize\renewcommand{\arraystretch}{0.9}
  \makeatletter\let\@makecaption\@maketablecaption\makeatother
  \captionof{table}{Notation.}\label{tab:notation}
  \begin{longtable}{@{}l p{0.62\linewidth}@{}}

    \toprule
    \rowcolor{hdrgray}\multicolumn{2}{@{}l}{\textit{Sets and indices}}\\
    \midrule
    $j\in\mathcal{J},\ d\in\mathcal{D},\ r\in\mathcal{R},\ k\in\mathcal{K}$
    & surgeries, days, operating rooms, surgeons\\
    $t\in\mathcal{T}=\{1,\ldots,T\}$ & reliability-menu level (tail
    probability $\alpha_t$)\\
    $\mathcal{V}\subseteq\mathcal{J}\!\times\!\mathcal{D}\!\times\!\mathcal{R}\!\times\!\mathcal{K}\!\times\!\mathcal{T}$
    & feasible assignment tuples $(j,d,r,k,t)$\\
    $\mathcal{V}_{\!JDK}\subseteq\mathcal{J}\!\times\!\mathcal{D}\!\times\!\mathcal{K}$
    & induced feasible triples $(j,d,k)$\\
    $\mathcal{V}_j,\ \mathcal{V}_{jdr},\ \mathcal{V}_{jdk},\ \mathcal{V}_d$
    & restricted feasible-index sets used in summations\\
    $j<i$ & index convention for pairwise sequencing variables\\
    \addlinespace[4pt]
    \rowcolor{hdrgray}\multicolumn{2}{@{}l}{\textit{Parameters}}\\
    \midrule
    $p_{jrk}^{(t)}$ & buffered duration of surgery $j$ at level $t$ (min)\\
    $\alpha_t$ & tail probability for menu level $t$\\
    $\ell_t=\log(1-\alpha_t)$ & log-reliability constant for menu level $t$\\
    $H_d$ & regular room horizon on day $d$ (min)\\
    $C_{kd}$ & surgeon $k$ regular capacity on day $d$ (min)\\
    $\bar O^{\mathrm{rm}},\ \bar O^{\mathrm{sg}}$ & maximum overtime
    per room and per surgeon (min)\\
    $M_d^{\mathrm{rm}},\ M_d^{\mathrm{sg}},\ M^{\mathrm{link}}$ &
    big-$M$ constants for room, surgeon, and synchronization constraints\\
    $c^{\mathrm{idle}},\ c^{\mathrm{rm}},\ c^{\mathrm{sg}},\ w_\varepsilon$
    & cost coefficients (idle, room overtime, surgeon overtime,
    reliability weight)\\
    \addlinespace[4pt]
    \rowcolor{hdrgray}\multicolumn{2}{@{}l}{\textit{Decision variables}}\\
    \midrule
    $w_{jdrkt}\in\{0,1\}$ & 1 if surgery $j$ is assigned to day $d$,
    room $r$, surgeon $k$, level $t$\\
    $z_{jdr}\in\{0,1\}$ & 1 if surgery $j$ is assigned to room $r$ on day $d$\\
    $y_{jdk}\in\{0,1\}$ & 1 if surgery $j$ is assigned to surgeon $k$
    on day $d$\\
    $s^{\mathrm{rm}}_{jdr},\ s^{\mathrm{sg}}_{jdk}\ge 0$ & start
    times on the room and surgeon timelines (min)\\
    $u^{\mathrm{rm}}_{ji\,dr},\ u^{\mathrm{sg}}_{ji\,dk}\in\{0,1\}$ &
    pairwise sequencing variables ($j<i$)\\
    $\delta^{\mathrm{rm}}_{jdr},\ \delta^{\mathrm{sg}}_{jdk}\ge 0$ &
    buffered durations induced on the room and surgeon timelines (min)\\
    $o^{\mathrm{rm}}_{dr},\ o^{\mathrm{sg}}_{dk}\ge 0$ & room and
    surgeon overtime (min)\\
    $I_{dr}\ge 0$ & unused room capacity (min)\\
    $b_d\le 0,\ b_{\min}\le 0$ & day-level and worst-day log-budgets
    ($b_{\min}$ used in M2 only)\\
    \addlinespace[4pt]
    \rowcolor{hdrgray}\multicolumn{2}{@{}l}{\textit{Derived quantities}}\\
    \midrule
    $\varepsilon_d:=1-e^{b_d},\ \varepsilon_{\max}:=\max_{d\in\mathcal{D}}\varepsilon_d$
    & day-level and worst-day violation proxies\\
    $\mathcal J_d$ & surgeries assigned to day $d$ by the
    schedule (those $j$ with $\sum_{(r,k,t)}w_{jdrkt}=1$)\\
    \bottomrule
\end{longtable}\addtocounter{table}{-1}}

We first present the compact core formulation, then the additional
activation and non-overlap constraint blocks of the implemented MILP.

\begingroup
\setlength{\abovedisplayskip}{4pt}
\setlength{\belowdisplayskip}{4pt}
\setlength{\abovedisplayshortskip}{4pt}
\setlength{\belowdisplayshortskip}{4pt}
\allowdisplaybreaks
\setlength{\jot}{2pt}
\begin{subequations}
  \label{eq:core-formulation}
  \begin{align}
    (\text{M}_{c}) \min\
    & c^{\mathrm{idle}}\sum_{d,r} I_{dr}
    + c^{\mathrm{rm}}\sum_{d,r} o^{\mathrm{rm}}_{dr}\notag\\*
    & + c^{\mathrm{sg}}\sum_{d,k} o^{\mathrm{sg}}_{dk}
    + \Psi(\cdot)
    \label{eq:core-obj}\\
    \text{s.t.}\quad
    & \sum_{(d,r,k,t)\in\mathcal V_j} w_{jdrkt} = 1
    \label{eq:core-assign}\\
    & z_{jdr} = \sum_{(k,t)\in\mathcal V_{jdr}} w_{jdrkt}
    \label{eq:core-z}\\
    & y_{jdk} = \sum_{(r,t)\in\mathcal V_{jdk}} w_{jdrkt}
    \label{eq:core-y}\\
    & \delta^{\mathrm{rm}}_{jdr}
    = \sum_{(k,t)\in\mathcal V_{jdr}} p_{jrk}^{(t)} w_{jdrkt}
    \label{eq:core-drm}\\
    & \delta^{\mathrm{sg}}_{jdk}
    = \sum_{(r,t)\in\mathcal V_{jdk}} p_{jrk}^{(t)} w_{jdrkt}
    \label{eq:core-dsg}\\
    & s^{\mathrm{sg}}_{jdk}
    \ge s^{\mathrm{rm}}_{jdr} - M^{\mathrm{link}}(1-w_{jdrkt})
    \label{eq:core-sync-lb}\\
    & s^{\mathrm{sg}}_{jdk}
    \le s^{\mathrm{rm}}_{jdr} + M^{\mathrm{link}}(1-w_{jdrkt})
    \label{eq:core-sync-ub}\\
    & s^{\mathrm{rm}}_{jdr} + \delta^{\mathrm{rm}}_{jdr}
    \le H_d + o^{\mathrm{rm}}_{dr} + M_d^{\mathrm{rm}}(1-z_{jdr})
    \label{eq:core-room-comp}\\
    & \sum_{j:(j,d,k)\in\mathcal V_{\!JDK}} \delta^{\mathrm{sg}}_{jdk}
    \le C_{kd} + o^{\mathrm{sg}}_{dk}
    \label{eq:core-sg-cap}\\
    & s^{\mathrm{sg}}_{jdk} + \delta^{\mathrm{sg}}_{jdk}
    \le C_{kd} + o^{\mathrm{sg}}_{dk} + M_d^{\mathrm{sg}}(1-y_{jdk})
    \label{eq:core-sg-comp}\\
    & I_{dr} = H_d + o^{\mathrm{rm}}_{dr}
    - \sum_{j\in\mathcal J}\delta^{\mathrm{rm}}_{jdr}
    \label{eq:core-idle}\\
    & 0 \le o^{\mathrm{rm}}_{dr} \le \bar O^{\mathrm{rm}}
    \label{eq:core-orm}\\
    & 0 \le o^{\mathrm{sg}}_{dk} \le \bar O^{\mathrm{sg}}
    \label{eq:core-osg}\\
    & \sum_{(j,r,k,t)\in\mathcal V_d} \ell_t w_{jdrkt} \ge b_d
    \label{eq:core-rel}
  \end{align}
\end{subequations}
\endgroup

\noindent{\it Big-M.} Let
$P_{\max}:=\max\{p_{jrk}^{(t)}:(j,d,r,k,t)\in\mathcal V\}$; we set
$M_d^{\mathrm{rm}}=H_d+\bar O^{\mathrm{rm}}+P_{\max}$,
$M_d^{\mathrm{sg}}=\max_{k\in\mathcal K}C_{kd}+\bar O^{\mathrm{sg}}+P_{\max}$,
and $M^{\mathrm{link}}=\max_{d\in\mathcal D}H_d+\bar O^{\mathrm{rm}}$.

\noindent\textit{Index domains.}
Constraint~\eqref{eq:core-assign} applies for all $j\in\mathcal J$.
Constraints~\eqref{eq:core-z}, \eqref{eq:core-drm}, and
\eqref{eq:core-room-comp}
apply for all $j\in\mathcal J$, $d\in\mathcal D$, and $r\in\mathcal R$.
Constraints~\eqref{eq:core-y}, \eqref{eq:core-dsg}, and \eqref{eq:core-sg-comp}
apply for all $(j,d,k)\in\mathcal V_{\!JDK}$.
Constraints~\eqref{eq:core-sync-lb}--\eqref{eq:core-sync-ub}
apply for all $(j,d,r,k,t)\in\mathcal V$.
Constraints~\eqref{eq:core-sg-cap} and \eqref{eq:core-osg}
apply for all $d\in\mathcal D$ and $k\in\mathcal K$.
Constraints~\eqref{eq:core-idle} and \eqref{eq:core-orm}
apply for all $d\in\mathcal D$ and $r\in\mathcal R$.
Constraint~\eqref{eq:core-rel} applies for all $d\in\mathcal D$.

The core formulation captures the main structure of the problem.
Constraint~\eqref{eq:core-assign} assigns each surgery exactly once,
jointly selecting its day, room, surgeon, and reliability level.
Constraints~\eqref{eq:core-z}--\eqref{eq:core-dsg} define the induced
room and surgeon assignment indicators and the corresponding buffered
durations.
Constraints~\eqref{eq:core-sync-lb}--\eqref{eq:core-sync-ub}
synchronize the room and surgeon start times for the chosen
assignment. Constraints~\eqref{eq:core-room-comp} and
\eqref{eq:core-sg-cap}--\eqref{eq:core-sg-comp} ensure that buffered
workloads fit within room and surgeon capacity, allowing bounded
overtime. Constraints~\eqref{eq:core-idle}--\eqref{eq:core-osg}
define idle time and bound overtime: $I_{dr}$ measures unused
staffed room time relative to the effective horizon
$H_d+o^{\mathrm{rm}}_{dr}$, while $o^{\mathrm{rm}}_{dr}$ prices any
extension beyond regular hours; because the objective penalizes both
$I_{dr}$ and $o^{\mathrm{rm}}_{dr}$, extending the room horizon without
scheduled work is never attractive. Finally,
constraint~\eqref{eq:core-rel} aggregates the selected menu levels
into a day-level log-budget, which is then controlled differently by
the model variants. The risk term $\Psi(\cdot)$ in the objective
takes one of three forms, corresponding to the three model variants
developed in Section~\ref{sec:three-models}.

The four additional blocks --- start-time activation,
sequencing-variable activation, and room and surgeon non-overlap ---
follow. The two activation blocks are linking
constraints: they do not change the feasible set of integer schedules,
but they tighten the linear programming relaxation and improve
branch-and-bound convergence in our implementation. The room and
surgeon non-overlap blocks are structural: they enforce the pairwise
disjunctions that complete the model, and the sample-path guarantee of
Section~\ref{sec:probabilistic-interp} relies on them.

\begingroup
\setlength{\abovedisplayskip}{6pt plus 1pt minus 1pt}
\setlength{\belowdisplayskip}{6pt plus 1pt minus 1pt}
\setlength{\abovedisplayshortskip}{4pt plus 1pt minus 1pt}
\setlength{\belowdisplayshortskip}{6pt plus 1pt minus 1pt}
\setlength{\jot}{2pt}

\paragraph{Start-time activation constraints.}
To prevent unassigned surgeries from carrying arbitrary start times, we add
\begin{align}
  s^{\mathrm{rm}}_{jdr} &\le M_d^{\mathrm{rm}} z_{jdr}
  \label{eq:s-rm-act}\\
  s^{\mathrm{sg}}_{jdk} &\le M_d^{\mathrm{sg}} y_{jdk}
  \label{eq:s-sg-act}
\end{align}
Constraint~\eqref{eq:s-rm-act} applies for all $j\in\mathcal J$,
$d\in\mathcal D$, and $r\in\mathcal R$, while
constraint~\eqref{eq:s-sg-act} applies for all $(j,d,k)\in\mathcal
V_{\!JDK}$. These inequalities force start times to zero unless the
corresponding room-day or surgeon-day assignment is selected.

\paragraph{Sequencing-variable activation constraints.}
To ensure that pairwise sequencing variables are active only when
both surgeries are assigned to the same resource on the same day, we impose
\begin{align}
  u^{\mathrm{rm}}_{ji\,dr} &\le z_{jdr}
  \label{eq:u-rm-link1}\\
  u^{\mathrm{rm}}_{ji\,dr} &\le z_{idr}
  \label{eq:u-rm-link2}\\
  u^{\mathrm{sg}}_{ji\,dk} &\le y_{jdk}
  \label{eq:u-sg-link1}\\
  u^{\mathrm{sg}}_{ji\,dk} &\le y_{idk}
  \label{eq:u-sg-link2}
\end{align}
Constraints~\eqref{eq:u-rm-link1}--\eqref{eq:u-rm-link2} apply for
all $j<i$, $d\in\mathcal D$, and $r\in\mathcal R$, and
constraints~\eqref{eq:u-sg-link1}--\eqref{eq:u-sg-link2} apply for
all $j<i$, $d\in\mathcal D$, and $k\in\mathcal K$. These inequalities
force the sequencing variables to zero whenever at least one of the
two surgeries is absent from the corresponding room-day or surgeon-day timeline.

\paragraph{Room non-overlap constraints.}
The room timeline is made conflict-free through the pairwise
disjunctive constraints
\begin{align}
  s^{\mathrm{rm}}_{jdr} + \delta^{\mathrm{rm}}_{jdr}
  &\le s^{\mathrm{rm}}_{idr}
  + M_d^{\mathrm{rm}}(1-u^{\mathrm{rm}}_{ji\,dr})
  \notag\\
  &\quad + M_d^{\mathrm{rm}}(2-z_{jdr}-z_{idr})
  \label{eq:room-seq1}\\
  s^{\mathrm{rm}}_{idr} + \delta^{\mathrm{rm}}_{idr}
  &\le s^{\mathrm{rm}}_{jdr}
  + M_d^{\mathrm{rm}}u^{\mathrm{rm}}_{ji\,dr}
  \notag\\
  &\quad + M_d^{\mathrm{rm}}(2-z_{jdr}-z_{idr})
  \label{eq:room-seq2}
\end{align}
for all $j<i$, $d\in\mathcal D$, and $r\in\mathcal R$. These
constraints enforce that, whenever two surgeries are assigned to the
same room on the same day, one must precede the other without overlap.

\paragraph{Surgeon non-overlap constraints.}
Similarly, surgeon availability is enforced through
\begin{align}
  s^{\mathrm{sg}}_{jdk} + \delta^{\mathrm{sg}}_{jdk}
  &\le s^{\mathrm{sg}}_{idk}
  + M_d^{\mathrm{sg}}(1-u^{\mathrm{sg}}_{ji\,dk})
  \notag\\
  &\quad + M_d^{\mathrm{sg}}(2-y_{jdk}-y_{idk})
  \label{eq:sg-seq1}\\
  s^{\mathrm{sg}}_{idk} + \delta^{\mathrm{sg}}_{idk}
  &\le s^{\mathrm{sg}}_{jdk}
  + M_d^{\mathrm{sg}}u^{\mathrm{sg}}_{ji\,dk}
  \notag\\
  &\quad + M_d^{\mathrm{sg}}(2-y_{jdk}-y_{idk})
  \label{eq:sg-seq2}
\end{align}
for all $j<i$, $d\in\mathcal D$, and $k\in\mathcal K$. These
constraints ensure that no surgeon is assigned overlapping surgeries
on the same day.
\endgroup

\noindent
Having defined the base model \eqref{eq:core-obj}--\eqref{eq:sg-seq2}
as M$_b$, which schedules cases using buffered durations and
day-level reliability accounting, we next clarify how these two
ingredients translate into execution feasibility and probabilistic risk control.

\subsection{Probabilistic Interpretation}
\label{sec:probabilistic-interp}

The day-level log-budget $b_d$ in constraint~\eqref{eq:core-rel}
admits a probabilistic interpretation that links the MILP feasibility
region to a chance constraint on schedule disruption, developed below
in three steps: an independence-based bound, a dependence-robust
union-bound alternative, and a sample-path feasibility result.

\paragraph{Day-level reliability under independence.}
For each surgery $j\in\mathcal{J}_d$ --- writing $\mathcal{J}_d$
for the set of surgeries the schedule assigns to day $d$, i.e., those
$j$ with $\sum_{(r,k,t)}w_{jdrkt}=1$ ---, let $E_j := \{X_j \le p_j\}$
denote the event that the realized duration $X_j$ does not exceed the
buffered duration $p_j$ assigned by the chosen menu level $t(j)$, and
let $E_d:=\bigcap_{j\in\mathcal{J}_d}\{X_j\le p_j\}$ denote the event
that every surgery on day $d$ finishes within its buffer. Each menu level
guarantees $\Pr(E_j^c) \le \alpha_{t(j)}$. Under conditional
independence of the within-buffer events across surgeries on day $d$,
and using $b_d \le \sum_{j\in\mathcal{J}_d}\ell_{t(j)}$ (enforced
by constraint~\eqref{eq:core-rel}, and tight whenever the reliability
reward makes it bind) with $\ell_t =
\log(1-\alpha_t)$,
\[
  \Pr(E_d) \;\ge\; \prod_{j\in\mathcal{J}_d}(1-\alpha_{t(j)}) \;\ge\; e^{b_d},
\]
and therefore
\begin{equation}
  \label{eq:eps-bound}
  \Pr(E_d^c) \;\le\; 1 - e^{b_d} \;=:\; \varepsilon_d.
\end{equation}
Hence $\varepsilon_d$ is an upper bound on the day-level violation
probability under independence: the log-budget
constraint~\eqref{eq:core-rel} enforces
$b_d\le\sum_{j\in\mathcal{J}_d}\ell_{t(j)}$, so
$\varepsilon_d \ge 1-\prod_{j\in\mathcal{J}_d}(1-\alpha_{t(j)})
\ge \Pr(E_d^c)$.

\paragraph{Dependence-robust alternative.}
The independence assumption is convenient but not always credible:
durations may be correlated through shared teams, equipment,
workflows, or patient-mix effects. The marginal Cantelli guarantee
$\Pr(E_j^c) \le \alpha_{t(j)}$ remains valid regardless of
dependence, but the product formula does not. A dependence-robust
bound is obtained from Boole's inequality:
\begin{equation}
  \label{eq:union-bound-main}
  \Pr(E_d^c) \;\le\; \sum_{j\in\mathcal J_d}\Pr(E_j^c) \;\le\;
  \sum_{j\in\mathcal J_d}\alpha_{t(j)}.
\end{equation}
Replacing the log-budget~\eqref{eq:core-rel} with the linear budget
$\sum_{j\in\mathcal J_d}\alpha_{t(j)} \le \varepsilon$ guarantees
$\Pr(E_d^c) \le \varepsilon$ without any assumption on the joint
distribution. The linear budget is more conservative than the
log-budget when violations are nearly independent, but it is
worst-case tight under arbitrary dependence: no smaller
dependence-robust budget can be guaranteed from the marginal Cantelli
bounds alone. The formal tightness construction is given in
\ECappref{sec:ec-bonferroni} of the Electronic Companion.

\paragraph{From buffer events to schedule feasibility.}
Both bounds control buffer violations; the next proposition shows these imply feasibility of the realized schedule.
\begin{proposition}[Sample-path feasibility and day-level risk bound]
  \label{prop:sample-path}
  Fix a day \(d\) and consider any feasible solution the
  buffered model --- the core block~\eqref{eq:core-formulation}
  together with the activation and non-overlap
  blocks~\eqref{eq:s-rm-act}--\eqref{eq:sg-seq2} --- under any of the
  three posture objectives. Let \(t(j)\)
  denote the reliability-menu level selected for surgery \(j\in
  \mathcal J_d\). Let \(F_d\) denote the event that the realized schedule on day \(d\)
  is feasible, i.e., it has no room-timeline overlap, no surgeon-timeline
  overlap, and completes within the planned room and surgeon horizons, including
  allowed overtime. Then
  $
  E_d \subseteq F_d.
  $
  Consequently,
  $
  \Pr(F_d^c)\le \Pr(E_d^c).
  $
  Under the independence aggregation of
  Section~\ref{sec:probabilistic-interp}, if
  $
  B_d := \sum_{j\in \mathcal J_d}\log(1-\alpha_{t(j)}),
  $
  then
  $
  \Pr(F_d^c)
  \le
  \Pr(E_d^c)
  \le
  1-e^{B_d}.
  $
  Since the model enforces \(b_d\le B_d\) through
  constraint~\eqref{eq:core-rel},
  substituting \(b_d\) for \(B_d\) yields the weaker but still valid bound
  $
  \Pr(F_d^c)\le 1-e^{b_d},
  $
  expressed in the budget variable the MILP actually controls. Under
  arbitrary dependence, the same sample-path
  inclusion combined
  with the union bound gives
  $
  \Pr(F_d^c)\le \sum_{j\in \mathcal J_d}\alpha_{t(j)}.
  $
\end{proposition}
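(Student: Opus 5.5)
The plan is to prove the inclusion $E_d\subseteq F_d$ by a deterministic sample-path argument, and then read off all probability bounds from it using monotonicity of probability and the aggregation bounds already established in Section~\ref{sec:probabilistic-interp}.

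Fix a feasible solution of the buffered model and fix an outcome $\omega\in E_d$, so that $X_j(\omega)\le p_j$ for every $j\in\mathcal J_d$, where $p_j=p^{(t(j))}_{jrk}$ for the selected $(r,k)$. The realized schedule is the one that executes the planned start times $s^{\mathrm{rm}}_{jdr}$ (equal to $s^{\mathrm{sg}}_{jdk}$ by~\eqref{eq:core-sync-lb}--\eqref{eq:core-sync-ub}, since $w_{jdrkt}=1$ kills the big-$M$ terms). Surgery $j$ then occupies the interval $[s_j,s_j+X_j]$. This interval is contained in the planned interval $[s_j,s_j+p_j]$, and $p_j=\delta^{\mathrm{rm}}_{jdr}=\delta^{\mathrm{sg}}_{jdk}$ by~\eqref{eq:core-drm}--\eqref{eq:core-dsg}, because exactly one $w$ is active by~\eqref{eq:core-assign}.

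Feasibility of $\omega$ is checked resource by resource. For two surgeries $j<i$ in the same room $r$ on day $d$, we have $z_{jdr}=z_{idr}=1$, so the last big-$M$ term in~\eqref{eq:room-seq1}--\eqref{eq:room-seq2} vanishes. Depending on $u^{\mathrm{rm}}_{ji\,dr}$, one of them reads, say, $s_j+\delta^{\mathrm{rm}}_j\le s_i$. Hence $s_j+X_j\le s_j+p_j\le s_i$, and there is no realized overlap. The surgeon case is identical, using~\eqref{eq:sg-seq1}--\eqref{eq:sg-seq2} with $y_{jdk}=y_{idk}=1$.

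Horizon compliance works the same way. Constraint~\eqref{eq:core-room-comp} with $z_{jdr}=1$ gives $s_j+X_j\le s_j+p_j\le H_d+o^{\mathrm{rm}}_{dr}$. Constraint~\eqref{eq:core-sg-comp} with $y_{jdk}=1$ gives $s_j+X_j\le C_{kd}+o^{\mathrm{sg}}_{dk}$. The aggregate surgeon load also satisfies $\sum X_j\le\sum\delta^{\mathrm{sg}}_{jdk}\le C_{kd}+o^{\mathrm{sg}}_{dk}$ by~\eqref{eq:core-sg-cap}. The overtime values are fixed by the solution and bounded by~\eqref{eq:core-orm}--\eqref{eq:core-osg}, so every realized completion lies within the planned horizon including allowed overtime. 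None of these arguments uses the objective, so the result holds for any posture $\Psi$. This gives $\omega\in F_d$, and therefore $\Pr(F_d^c)\le\Pr(E_d^c)$.

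For the bounds, under independence $\Pr(E_d)=\prod_j\Pr(E_j)\ge\prod_j(1-\alpha_{t(j)})=e^{B_d}$, using~\eqref{eq:engine-guarantee}. Then~\eqref{eq:core-rel} gives $b_d\le B_d$, so $1-e^{B_d}\le 1-e^{b_d}$. Under arbitrary dependence, Boole's inequality~\eqref{eq:union-bound-main} gives $\Pr(F_d^c)\le\sum_j\alpha_{t(j)}$.

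The main obstacle is definitional rather than technical: fixing what ``realized schedule'' means. I would take it as executing the planned start times (so no early starts). This choice makes realized intervals nest inside planned intervals and keeps feasibility immune to shorter-than-planned durations. I would state this convention explicitly.
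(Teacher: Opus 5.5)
Your proposal is correct and follows the paper's proof essentially step for step. You fix $\omega\in E_d$, use the active disjunct of the non-overlap constraints~\eqref{eq:room-seq1}--\eqref{eq:sg-seq2} and the completion constraints~\eqref{eq:core-room-comp} and~\eqref{eq:core-sg-comp} with realized durations dominated by buffered ones, and then read off the probability bounds from monotonicity, the independence product, $b_d\le B_d$, and Boole's inequality; your explicit convention that the realized schedule executes the planned start times (and your extra check of~\eqref{eq:core-sg-cap}) is the interpretation the paper uses implicitly, so it sharpens rather than changes the argument.
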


The full  proof is in
\ECappref{ec:samplepath} of the Electronic Companion.  The
inclusion may be strict: a schedule can remain feasible even when
some surgery exceeds its buffer, provided downstream slack absorbs the excess.

\subsection{Risk Posture and Model Variants}
\label{sec:three-models}

Hospitals differ in how they manage execution risk in elective
surgery schedules. Some prioritize average performance over the
horizon (accepting that some days may be more fragile), while others
seek protection against bad days that create reputational harm,
cancellations, or downstream congestion. We capture these risk
postures through three model variants that share the same assignment,
capacity, and sequencing constraints
\eqref{eq:core-assign}--\eqref{eq:core-rel} and differ only in how
the day-level log-budgets $\{b_d\}$ enter the optimization.

\paragraph{Cost components.}
For compactness, let
\begin{equation}
  \label{eq:cop}
  C_{\mathrm{op}} :=
  c^{\mathrm{idle}}\sum_{d,r} I_{dr}
  + c^{\mathrm{rm}}\sum_{d,r} o^{\mathrm{rm}}_{dr}
  + c^{\mathrm{sg}}\sum_{d,k} o^{\mathrm{sg}}_{dk}
\end{equation}
denote the operational-cost component of the objective. All three
variants minimize $C_{\mathrm{op}}$ plus a risk term $\Psi$ whose
form depends on the selected posture.

\paragraph{{\bf M1: Average (mean reliability).}}
\label{sec:m1}
M1 rewards higher average day-level reliability, allowing the
optimizer to improve the days where reliability gains are cheapest.
This is appropriate when the hospital evaluates performance over the
full horizon (e.g., weekly KPIs) and tolerates uneven reliability by
day. Letting $\bar b := \frac{1}{|\mathcal{D}|}\sum_{d\in\mathcal{D}}
b_d$ denote the average day-level log-budget, the M1 objective is to
minimize $C_{\mathrm{op}} - w_{\varepsilon}\,\bar b$.
Since $b_d \le 0$, the second term encourages $b_d$ values closer to
$0$ (higher reliability); normalization by $|\mathcal{D}|$ keeps
$w_\varepsilon$ comparable across horizons of different length.

\paragraph{{\bf M2: Worst-case (protect the weakest day).}}
\label{sec:m2}
M2 protects against the worst day in the horizon, redistributing
workload away from the bottleneck day to produce more balanced
day-level reliability. It is appropriate when management is sensitive
to tail events --- severe delays on a single day that trigger
cancellations, staff dissatisfaction, or patient complaints. We
introduce $b_{\min} \le 0$ to represent the minimum day-level
log-budget and impose $b_{\min} \le b_d$ for all $d \in \mathcal{D}$.
The M2 objective is to minimize $C_{\mathrm{op}} - w_{\varepsilon}\,
b_{\min}$.

\paragraph{{\bf M3: Fixed (hard reliability target).}}
\label{sec:m3}
M3 enforces a minimum reliability level on every day and minimizes
operational cost subject to that floor. It is the natural choice when
the hospital adopts a policy target (or external requirement) such as
``keep the probability of any buffer violation below
$\varepsilon_{\mathrm{target}}$ each day.'' Let
$\varepsilon_{\mathrm{target}} \in (0,1)$ denote the upper bound on
the day-level violation proxy and define $b_{\mathrm{rhs}} :=
\log(1-\varepsilon_{\mathrm{target}})$. We impose $b_d \ge
b_{\mathrm{rhs}}$ for all $d \in \mathcal{D}$, and the M3 objective
reduces to minimizing $C_{\mathrm{op}}$.

\begin{table}[t]
  \centering
  \caption{Risk interpretation of the three postures.}
  \label{tab:risk-postures}
  \small
  \setlength{\tabcolsep}{6pt}
  \renewcommand{\arraystretch}{1.15}
  \begin{tabular}{@{} l ccc @{}}
    \toprule
    \rowcolor{hdrgray}& \textbf{M1} \textit{(average)} & \textbf{M2} \textit{(worst-day)}
    & \textbf{M3} \textit{(hard target)} \\
    \midrule
    Action on $\{b_d\}$
    & reward $\bar b=\tfrac{1}{|\mathcal D|}\sum_{d} b_d$
    & reward $b_{\min}$, $b_{\min}\!\le\! b_d\,\forall d$
    & enforce $b_d\!\ge\!\log(1-\varepsilon_{\mathrm{t}})\,\forall d$ \\
    \addlinespace[2pt]
    \cmidrule(l{0pt}r{0pt}){2-4}
    \addlinespace[2pt]
    Probabilistic translation
    & $\tfrac{1}{|\mathcal D|}\sum_{d}\Pr(F_d^c)\le 1-e^{\bar b}$
    & $\max_{d}\Pr(F_d^c)\le 1-e^{b_{\min}}$
    & $\Pr(F_d^c)\le \varepsilon_{\mathrm{t}}\,\forall d$ \\
    \addlinespace[2pt]
    \cmidrule(l{0pt}r{0pt}){2-4}
    \addlinespace[2pt]
    Object controlled
    & avg.\ log-reliability
    & worst-day reliability
    & per-day reliability floor \\
    \addlinespace[2pt]
    \cmidrule(l{0pt}r{0pt}){2-4}
    \addlinespace[2pt]
    Managerial reading
    & average-horizon reliability
    & worst-day protection
    & hard daily target \\
    \bottomrule
  \end{tabular}
  \par\smallskip
  {\footnotesize\raggedright Under the independence interpretation of
    Section~\ref{sec:probabilistic-interp};
    $\varepsilon_{\mathrm{t}}\!=\!\varepsilon_{\mathrm{target}}$.
    Schedule-infeasibility events $F_d^c$ are linked to
    buffer-violation events via Proposition~\ref{prop:sample-path}. For
    M1, the bound is the loose endpoint of $\tfrac{1}{|\mathcal
    D|}\sum_d \Pr(F_d^c)\le 1-\tfrac{1}{|\mathcal D|}\sum_d e^{b_d}\le
  1-e^{\bar b}$ (Jensen).\par}
\end{table}

\ECappref{ec:menu-value-condensed} of the Electronic Companion
makes the unification precise: in a fixed-schedule relaxation, M1, M2,
and M3 produce the same case-level reliability allocation on any
day --- distributing protection according to each surgery's variance --- and
differ only in the total reliability budget assigned to each day
(Table~\ref{tab:risk-postures}).

\section{Computational Experiments}
\label{sec:computational-experiments}

This section evaluates how the three risk postures (M1--M3) trade
off operating cost and execution risk across a set of synthetic
instances designed to stress different capacity bottlenecks. Two
supplementary computational analyses are reported in the Electronic
Companion: \MV{\ECappref{ec:mechanism}} presents mechanism
diagnostics for the
$\alpha$-menu --- the depolarization of $\alpha$ selections under increasing
reliability penalty, and the correlation between assigned $\alpha$
and surgery duration --- and \MV{\ECappref{ec:buffer-engines}}
reports a controlled
synthetic plug-in check showing how Cantelli, $W_\infty$, and $W_1$
buffers enter the same engine-agnostic scheduling model.
\subsection{Experimental Setup}
\label{sec:exp-setup}
All experiments were conducted on a workstation with an Intel Core
i7-11700K processor (3.60GHz, 8 cores, 16 threads) and 32 GB RAM. The
MIP models were implemented in Pyomo and solved with Gurobi 11.0.0.
Unless stated otherwise, we impose a time limit of 1{,}800 seconds
and a target MIP gap of 1\%.

The experiments use four synthetic instances (A--D in
Table~\ref{tab:instance-characteristics}) that share the common generator
described in this subsection and differ in size, resource balance ($|J|$,
$|R|$, $|K|$), and surgical case mix. We consider a one-week planning
horizon (Monday--Friday) with daily
operating-room availability decreasing through the week: 600 minutes
on Monday and Tuesday, 540 on Wednesday, 480 on Thursday, and 420 on
Friday. Surgeon daily capacities are heterogeneous, cycling through
$\{480, 540, 600, 480, 520, 560\}$ minutes; the last ${\sim}40\%$ of
surgeons are unavailable on Friday, creating a compound bottleneck on
that day. Each instance draws its caseload from three size classes ---
small (expected duration 60--90\,min, coefficient of variation 0.25), medium
(120--150\,min, 0.20), and large (180--240\,min, 0.18) --- with the proportion
of each class varying across instances.

To capture environment-dependent performance, expected durations are
modulated by room efficiency factors $\phi_r \in [0.95, 1.05]$ and
surgeon efficiency factors $\psi_k \in [0.90, 1.10]$, applied
multiplicatively: $\mu_{jrk} = \mu_j \cdot \phi_r \cdot \psi_k$,
$\sigma_{jrk} = \sigma_j \cdot \phi_r \cdot \psi_k$. A combination
$(j,d,r,k,t)$ is included in the feasible set $\mathcal{V}$ if the
doctor is available on day $d$ and the buffered duration does not
exceed the room horizon or doctor capacity (including overtime).
Buffered durations are computed via Cantelli's inequality:
$p_{jrk}^{(t)} = \mu_{jrk} + \sqrt{(1 -
\alpha_t)/\alpha_t}\;\sigma_{jrk}$. The reliability menu offers eight
risk levels, $\alpha \in \{0.005, 0.01, 0.02, 0.03, 0.04, 0.05,
0.075, 0.10\}$. Maximum allowable overtime is 120 minutes per room
and 60 minutes per surgeon per day.

Table~\ref{tab:instance-characteristics} summarises the four
instances. The buffered load columns show total scheduled demand
divided by total capacity at three reliability levels: raw
mean~$\bar\mu$, moderate buffer ($\alpha = 0.05$, Cantelli factor
$\sqrt{(1{-}\alpha)/\alpha} = 4.36$), and the most conservative menu
entry ($\alpha = 0.005$, factor~14.1). A load above 100\% means
overtime or less conservative $\alpha$ choices are unavoidable.
Instance~A has the highest room load (3~rooms for 25~surgeries);
Instance~D's doctors are already over capacity at $\alpha = 0.05$
(105\%), making doctor overtime dominant. Instances B and~C have
similar load balance, but C is a harder optimisation problem
(50~surgeries, ${\sim}$105k binary variables). Each instance is
solved with  two
random seeds. The seed re-draws only the per-surgery expected
durations within their class ranges (standard deviations then follow
from the fixed class coefficients of variation), while all structural
parameters --- capacities, availability, and the room and surgeon
efficiency grids --- are identical across seeds. All values in
Table~\ref{tab:instance-characteristics} are averaged over both seeds.

\begin{table}[ht]
  \centering
  \caption{Instance characteristics, averaged over two random seeds.}
  \label{tab:instance-characteristics}
  \small
  \begin{tabular}{l l rrrr  rrr  rrr}
    \toprule
    \rowcolor{hdrgray}& & & & & & \multicolumn{3}{c}{Room Load} &
    \multicolumn{3}{c}{Doctor Load} \\
    \cmidrule(lr){7-9} \cmidrule(lr){10-12}
    \rowcolor{hdrgray}Inst & Profile & $|J|$ & $|R|$ & $|K|$ & $|D|$
    & $\bar\mu$ & $.05$ & $.005$
    & $\bar\mu$ & $.05$ & $.005$ \\
    \midrule
    A & Tight (room)      & 25 & 3 & 4 & 5 & 51\% &  93\% & 189\% &
    40\% &  74\% & 149\% \\
    B & Balanced           & 40 & 4 & 4 & 5 & 44\% &  85\% & 176\% &
    47\% &  90\% & 186\% \\
    C & Balanced (large)   & 50 & 5 & 5 & 5 & 44\% &  85\% & 176\% &
    48\% &  93\% & 192\% \\
    D & Doctor-constrained & 35 & 5 & 3 & 5 & 31\% &  60\% & 123\% &
    55\% & 105\% & 217\% \\
    \bottomrule
  \end{tabular}
\end{table}

In our evaluation, we report (i) operating cost $C_{\mathrm{op}}$ and
(ii) day-level
reliability proxies $\varepsilon_d:=1-e^{b_d}$ and
$\varepsilon_{\max}:=\max_{d}\varepsilon_d$. Evaluation under
realized durations using sample-path replay against held-out
historical data is the focus of the case study in Section~\ref{sec:case-study}.

\subsection{Risk Postures: Comparing M1--M3}
\label{sec:exp-risk-postures}

For concreteness, we focus
first on Instance~C, the balanced-large synthetic
instance with 50 surgeries, 5 rooms, and 5 surgeons. The same comparison
is reported across the remaining synthetic instances below, with
supporting mechanism diagnostics provided in the Electronic
Companion (\ECappref{ec:mechanism}).

For M1 and M2
we sweep the reliability weight $w_\varepsilon \in \{1, 1{,}000,
100{,}000\}$. For M3 we use bisection over
$\varepsilon_{\mathrm{target}} \in [0.10, 0.50]$ with tolerance
$0.05$ to find the smallest feasible target; unlike M1 and M2, M3 has
no reliability-penalty
parameter. Table~\ref{tab:m1m2m3-summary} summarizes operational cost,
overtime, and reliability metrics across all seven runs ($\bar\alpha$
denotes the per-instance average of the chosen reliability levels).

\begin{table}[ht]
  \centering
  \caption[M1, M2, and M3 across $w_\varepsilon$ values on
  Instance~C]{M1, M2, and M3 across $w_\varepsilon$ values on
    Instance~C (a single representative seed; the multi-instance
    averages over two seeds are in Table~\ref{tab:multi-instance-outcomes}).
    $C_{\mathrm{op}} = \text{idle} + 3\,\text{room OT} +
    1.5\,\text{doctor OT}$; $\varepsilon_d = 1 -
  \prod_{j\in\text{day}\,d}(1-\alpha_j)$. Times in minutes.}
  \label{tab:m1m2m3-summary}
  \small
  \begin{tabular}{ll rrr r rrr r}
    \toprule
    \rowcolor{hdrgray}& & \multicolumn{3}{c}{Overtime \& idle} & &
    \multicolumn{3}{c}{Reliability} & \\
    \cmidrule(lr){3-5} \cmidrule(lr){7-9}
    \rowcolor{hdrgray}Model & $w_\varepsilon$ & Room OT & Doctor OT & Idle &
    $C_{\mathrm{op}}$ & $\varepsilon_{\mathrm{worst}}$ &
    $\varepsilon_{\mathrm{best}}$ & $\varepsilon$-spread & $\bar\alpha$ \\
    \midrule
    M1 & 1           & 1    & 17   & 1{,}739  & 1{,}767  & 0.574 &
    0.300 & 0.273 & 0.058 \\
    M1 & 1{,}000     & 1    & 29   & 1{,}724  & 1{,}771  & 0.501 &
    0.261 & 0.239 & 0.049 \\
    M1 & 100{,}000   & 1{,}026 & 1{,}040 & 1{,}162 & 5{,}801 & 0.328
    & 0.123 & 0.205 & 0.028 \\
    \midrule
    M2 & 1           & 0    & 16   & 1{,}751  & 1{,}777  & 0.589 &
    0.261 & 0.329 & 0.061 \\
    M2 & 1{,}000     & 1    & 38   & 1{,}709  & 1{,}768  & 0.463 &
    0.348 & 0.115 & 0.055 \\
    M2 & 100{,}000   & 876  & 888  & 1{,}225  & 5{,}185  & 0.263 &
    0.256 & 0.006 & 0.030 \\
    \midrule
    M3 & bisection   & 152  & 269  & 1{,}514  & 2{,}373  & 0.362 &
    0.359 & 0.003 & 0.044 \\
    \bottomrule
  \end{tabular}
\end{table}

\paragraph{Reliability and schedule structure across days.}
At $w_\varepsilon = 100{,}000$, M2 equalizes day-level reliability
within a 0.6 percentage-point band, while M1 still leaves a
20+ percentage-point spread. Figure~\ref{fig:daily-epsilon-bars}
shows the per-day failure
probability $\varepsilon_d$ for M1, M2, and M3. At low $w_\varepsilon$,
both M1 and M2 exhibit high and variable failure probabilities
($\varepsilon_d \in [0.26, 0.59]$). As $w_\varepsilon$ increases, M1
improves reliability on the cheapest days---Friday, which has the
shortest horizon and fewest surgeries, drops to $\varepsilon =
0.12$---while the most congested day (Wednesday) remains at
$\varepsilon = 0.33$. M2 instead equalizes: at $w_\varepsilon = 100{,}000$,
all days fall in the narrow band $[0.256, 0.263]$. M3 achieves
near-perfect uniformity at $\varepsilon \approx 0.36$ without
requiring $w_\varepsilon$ calibration.

The behavior of M2 has a useful managerial interpretation. Let
$C_{\mathrm{op}}(x)$ be the operating cost in~\eqref{eq:cop} and
$m(x)=\min_{d\in\mathcal D} b_d(x)$ the worst-day log-budget of a
schedule $x$. Under M2, moving to a feasible schedule $\tilde x$ is
attractive whenever $C_{\mathrm{op}}(\tilde x)-C_{\mathrm{op}}(x)
< w_\varepsilon\bigl(m(\tilde x)-m(x)\bigr)$, so $w_\varepsilon$ is an
exchange rate between operating cost and worst-day reliability: M2
spends capacity, overtime, or reassignment to raise the least reliable
day whenever the gain justifies the cost. It therefore lifts the
current bottleneck day until another becomes binding, leveling
day-level reliability. This leveling is only a tendency: capacity,
eligibility, sequencing, or menu restrictions can prevent exact
equalization.

The same pattern holds across all simulations.
\ECappref{ec:m2-equalization} of the Electronic Companion gives
theoretical support and a counterexample showing when equalization can
fail (\ECappref{ec:m2-counterexample-compact}), and
\ECappref{ec:mechanism} examines how M1 and M2 allocate
reliability across surgeries.

\begin{figure}[ht]
  \centering
  \includegraphics[width=0.68\textwidth]{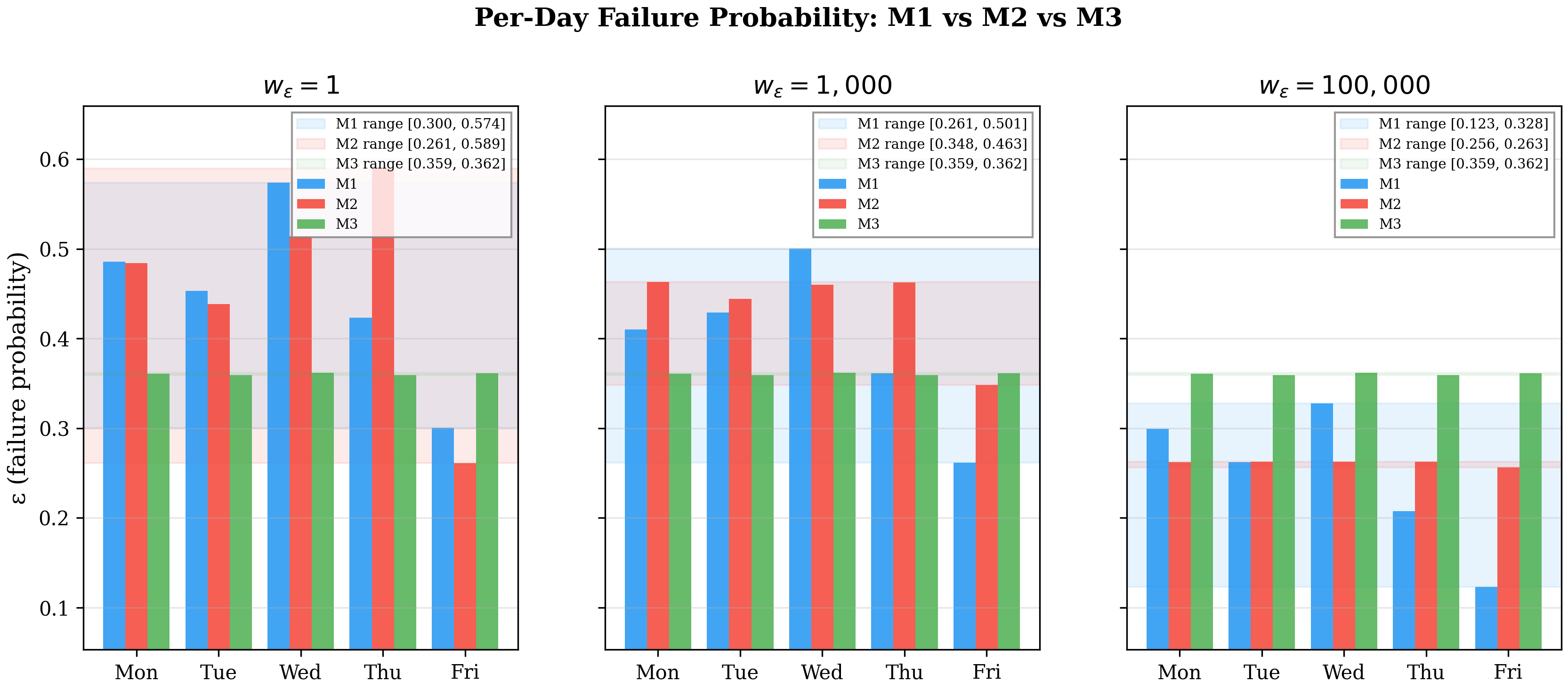}
  \caption{Per-day failure probability $\varepsilon_d$ for M1 (blue),
  M2 (red), and M3 (green) at the three $w_\varepsilon$ values.}
  \label{fig:daily-epsilon-bars}
\end{figure}

The way the optimizer uses the menu is itself informative. As the
reliability penalty grows, extreme $\alpha$ values disappear and
selections concentrate in a moderate band; within that band, longer
surgeries receive systematically higher $\alpha$ (Pearson $r=0.58$;
  diagnostics in \ECappref{ec:mechanism},
\ECfigref{ec-fig:alpha-stacked}--\ECfigref{ec-fig:alpha-vs-duration}).
The key question is therefore whether surgery-specific differentiation
within that band matters.

A fixed-schedule relaxation, derived in
\ECappref{ec:menu-value-condensed}, answers this exactly. Fix the
assignment and sequence, and suppose a given pool of buffer minutes
must be spread across a day's cases. Relative to the optimal
case-by-case allocation of those minutes, the best single common
reliability target inflates the day's failure-bound
surrogate by a factor of
exactly
\begin{equation}
  \label{eq:gamma-index}
  \Gamma_{\rm fix}\;=\; \frac{n\,\bigl(\sum_j \sigma_j\bigr)^2}
  {\bigl(\sum_j \sigma_j^{2/3}\bigr)^3},
\end{equation}
where $n$ is the number of surgeries. The index $\Gamma_{\rm fix}$ depends only on the
day's caseload: it equals one when all surgeries have the same $\sigma_j$ and
grows as variability concentrates in a few cases. The $2/3$ exponent
reflects a simple tradeoff. Reaching reliability $\alpha$ on case $j$
costs $\sigma_j\,\kappa(\alpha)$ buffer minutes, with
$\kappa(\alpha)=\sqrt{(1-\alpha)/\alpha}$, but adds $-\log(1-\alpha)$
to the day's budget irrespective of $\sigma_j$; reliability is thus
cheapest on low-variance cases, and equalizing the reliability bought
per buffer-minute across cases makes the optimal buffers grow as
$\sigma_j^{2/3}$ --- sublinearly, rather than in proportion to
$\sigma_j$. The menu can therefore protect low-variance cases more
tightly while still giving high-variance cases larger absolute
buffers, which no single $\alpha$ can do. Because $\Gamma_{\rm fix}$ is
scale-invariant and at least one, the prediction is sharp: the menu
offers no advantage on a homogeneous day and gains value as
heterogeneity grows.

\paragraph{What the solved instances show.}
To test whether the relaxation's prediction survives in the full
problem, we let the scheduler jointly optimize assignment, sequencing,
and reliability on a 50-surgery, 5-room, 5-surgeon week (the
Instance-C skeleton) and compare three policy families under an
identical $1{,}800$\,s budget per design point: the case-level
$\alpha$-menu and a common target applied to every surgery
(uniform-$\alpha$), both under the worst-day posture (M2) and traced
over $w_\varepsilon \in \{1, 3, 10, 30, 100\}\times 10^3$ and over
fixed reliability levels respectively; and uniform proportional
buffers $p_{jrk}=(1+\beta)\mu_{jrk}$
($\beta \in \{10, 20, 30, 50, 65, 80, 100, 125\}\%$) that minimize
operating cost ($w_\varepsilon = 0$). Every resulting schedule is
scored by its Cantelli worst-day failure bound, so all families are
compared on one reliability metric: the vertical axis of
Figure~\ref{fig:menu-dose-response} reports this certified worst-day
failure probability and the horizontal axis the aggregate planned
room-plus-surgeon overtime reservation. To vary heterogeneity we build
four dispersion profiles that differ only in how variance is spread
across surgeries, sharing the same skeleton and the same total
dispersion $\sum_j \sigma_j$, giving $\Gamma_{\rm fix} = 1.02$, $1.30$, and
$1.51$ across the realistic range and $1.70$ for a profile that
extends the trend.

Figure~\ref{fig:menu-dose-response} shows the resulting Pareto frontiers
under the worst-day posture (M2). The relaxation's conclusion
holds in the fully solved model: as $\Gamma_{\rm fix}$ increases, the case-level menu
progressively outperforms the scalar policies. The lowest certified worst-day
failure bound achieved by the menu decreases monotonically with $\Gamma_{\rm fix}$,
from $0.278$ to $0.238$, $0.197$, and $0.180$, whereas the best
uniform-$\alpha$ and proportional-buffer policies never improve beyond
$0.335$ and $0.414$ on any profile. Greater heterogeneity also
lowers the overtime needed for strong reliability: at
$\Gamma_{\rm fix}=1.70$ the menu certifies a $0.180$ bound while
reserving $1{,}248$ planned room-plus-surgeon overtime minutes per week,
against $0.278$ at $1{,}450$ minutes when $\Gamma_{\rm fix}=1.02$ --- stronger
protection at lower reserved overtime as heterogeneity grows.

\begin{figure}[!tb]
  \centering
  \CHGon
  \includegraphics[width=\textwidth]{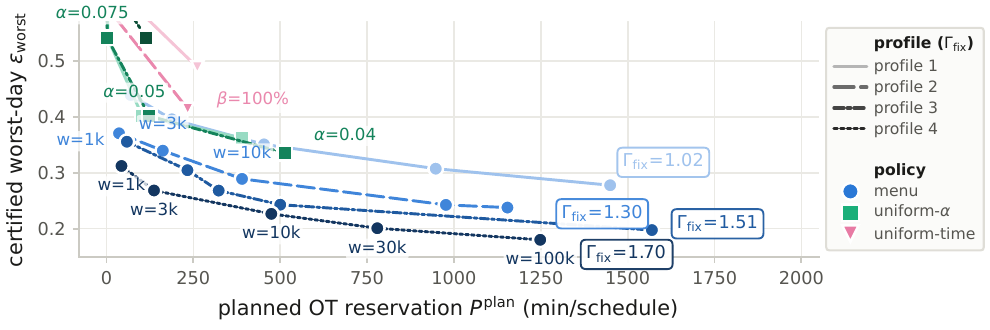}
  \caption{Certification--reservation frontiers (Instance~C, seed 42),
    zoomed to the decision-relevant band. Each point is the best
    schedule found within a 1{,}800\,s solver budget. Shade and line
    style indicate dispersion
    profiles 1--4 ($\Gamma_{\rm fix} = 1.02$, \GRN{$1.30$}, $1.51$, and
    $1.70$); profile 2 additionally matches profile 3's total
    variance $\sum_j \sigma_j^2$, so that any difference between them
    isolates $\Gamma_{\rm fix}$ from total variance. Off-frame variants reaching
  $\varepsilon$ up to $1$ are omitted.}
  \label{fig:menu-dose-response}
\end{figure}

The menu has a second advantage: it is self-calibrating. A
common-$\alpha$ policy must choose a single reliability level for all
surgeries, but the competitive value of that level depends on the
instance --- the surgery mix, room horizons, surgeon capacities, and daily
loads all affect which common $\alpha$ performs best. In practice, finding
it requires either solving the menu model first or sweeping over a grid of
fixed targets. The menu bypasses this problem: it finds the right
allocation endogenously. Across our four test instances, 99\% of selected
$\alpha$ values fall in the band $[0.02, 0.05]$, with a weighted mean that
shifts from $\bar\alpha \approx 0.026$ in balanced instances to $0.040$ in
doctor-constrained ones --- a structure-dependent adaptation that no fixed
rule can replicate without instance-specific tuning.

\paragraph{Robustness across instances.}
Table~\ref{tab:multi-instance-outcomes} reports M1, M2, and M3
results on all four instances averaged over 2 seeds. The patterns
observed for Instance~C
hold across the test set: M2 collapses the $\varepsilon$-spread to
$\le 0.007$ on every instance, while M1 leaves spreads of
0.088--0.150; M3 matches M2's uniformity at substantially lower
operating cost where the hard reliability target is achievable.
\begin{table}[ht]
  \centering
  \caption{M1/M2/M3 across all four instances at $w_\varepsilon =
    100{,}000$ (M3 by bisection), averaged over two seeds.
    $\varepsilon_{\mathrm{worst}}$: worst-day failure probability;
    $\varepsilon_\mu$: mean day-level failure probability;
  $\varepsilon$-spread: max$-$min day spread.}
  \label{tab:multi-instance-outcomes}
  \small
  \begin{tabular}{l l  rrr  rrr  r}
    \toprule
    \rowcolor{hdrgray}Inst & Model
    & $\varepsilon_{\mathrm{worst}}$ & $\varepsilon_\mu$ & $\varepsilon$-spread
    & Room OT & Doc OT & Idle
    & $C_{\mathrm{op}}$ \\
    \midrule
    \multirow{3}{*}{A}
    & M1 & 0.163 & 0.125 & 0.090 &  691 &  543 &  141 & 3{,}027 \\
    & M2 & 0.137 & 0.137 & 0.001 &  459 &  515 &  135 & 2{,}283 \\
    & M3 & 0.150 & 0.148 & 0.004 &  319 &  379 &  166 & 1{,}692 \\
    \midrule
    \multirow{3}{*}{B}
    & M1 & 0.216 & 0.178 & 0.088 &  904 &  826 &  697 & 4{,}647 \\
    & M2 & 0.192 & 0.192 & 0.001 &  704 &  614 &  723 & 3{,}755 \\
    & M3 & 0.225 & 0.224 & 0.004 &  293 &  375 &  777 & 2{,}218 \\
    \midrule
    \multirow{3}{*}{C}
    & M1 & 0.299 & 0.238 & 0.150 & 1{,}014 & 1{,}031 & 1{,}173 & 5{,}760 \\
    & M2 & 0.259 & 0.258 & 0.007 &  809 &  873 & 1{,}218 & 4{,}954 \\
    & M3 & 0.346 & 0.343 & 0.005 &  188 &  251 & 1{,}528 & 2{,}468 \\
    \midrule
    \multirow{3}{*}{D}
    & M1 & 0.279 & 0.233 & 0.129 &  787 &  661 & 5{,}834 & 9{,}186 \\
    & M2 & 0.249 & 0.248 & 0.002 &  680 &  572 & 5{,}833 & 8{,}731 \\
    & M3 & 0.283 & 0.282 & 0.002 &  408 &  336 & 5{,}877 & 7{,}606 \\
    \bottomrule
  \end{tabular}
\end{table}

\section{Case Study: Elective Surgery Scheduling at HLA Moncloa Hospital}
\label{sec:case-study}

This section evaluates the proposed approach using historical data
from HLA Moncloa Hospital, a private medium-sized multi-specialty
hospital in Madrid, Spain. The purpose is back-testing: we quantify
how schedules produced using only pre-schedule information would have
performed under the realized surgery durations observed in the
hospital data.

\subsection{Setting and Design}
\label{sec:case-study-design}

Elective cases are scheduled several days in advance over a rolling
planning horizon of two to five days. Operating rooms operate under a
regular daily horizon of 720 minutes; overtime is permitted but
operationally undesirable. The hospital's baseline planning practice
constructs schedules from expected (mean) procedure durations derived
from historical records, with buffers used informally and not
governed by an explicit reliability target.

We conduct a rolling-origin back-test using elective-surgery data
from 2016--2018. We set the first evaluation date to the beginning of
week 40 of 2018 (chosen to be as late as possible in the dataset
while avoiding the holiday weeks at year-end) and generate ten
consecutive, non-overlapping test instances. For each instance
$\ell=1,\dots,10$, we randomly draw a planning-horizon length $L_\ell
\in \{2,3,4,5\}$ days and a number of operating rooms $R_\ell \in
\{3,\dots,10\}$, define a time window of $L_\ell$ consecutive working
days starting at the current evaluation date, and extract all
elective surgeries performed during that window together with the
surgeons who actually performed them. Because we do not observe a
complete surgeon--procedure qualification matrix, we keep the surgeon
assignment fixed at its observed value (each surgery can only be
assigned to its historical surgeon). Nominal duration parameters used
for planning are computed in an out-of-sample fashion: for each
surgery--surgeon combination, we estimate $(\mu,\sigma)$ using all
historical observations available up to the day immediately preceding
the start of the window. We then solve the scheduling model for that
instance and evaluate the resulting schedule on the realized
durations from the window. The next instance starts on the first
working day after the previous window ends. The protocol generated
instances with 45 to 227 surgeries and between 5 and 19 surgeons; we
set regular room capacity $H = 720$ minutes per day and maximum
overtime $\bar{O}^{\text{room}} = \bar{O}^{\text{doc}} = 240$ minutes.

This design avoids information leakage and reflects how a planner
would update estimates over time. Three modeling choices warrant
comment. First, instances are consecutive in time, hence not i.i.d.;
we mitigate this through non-overlapping windows and within-instance
comparisons (robust vs.\ deterministic) under the same realized
workload. Second, randomizing $L_\ell$ and $R_\ell$ improves external
validity but reduces strict cross-instance comparability, so we
emphasize distributional effects (tail-delay reduction) stable across
that heterogeneity. Third, fixed surgeon assignments isolate the
value of uncertainty-aware buffering and sequencing from that of
cross-training or surgeon reassignment.

For each instance we construct six scheduling policies, three implementing the
planner heuristics, and three capturing distinct reliability policies.
The planner heuristics reflect current practice: a
\emph{deterministic} schedule built on mean
durations ($\sigma=0$)
and \emph{fixed proportional buffers} that inflate every mean
duration by a constant
$30\%$ or $50\%$. The \emph{reliability policies} impose an explicit day-level
reliability control: a single \emph{common~$\alpha$} applied to every
case (we report
  the loose but always-feasible $\alpha=0.10$ and, separately, the
  best-feasible common
level per instance), and the case-level \emph{$\alpha$-menu}. All policies are
reoptimized on identical data under the worst-day posture~(M2 model with
  Cantelli-based buffers with $\alpha \in
{0.005,\ 0.01,\ 0.02,\ 0.03,\ 0.04,\ 0.05,\ 0.075,\ 0.10}$).
Each run uses a
1800-second solver time limit. Each schedule is evaluated by
discrete-event simulation under the realized durations, propagating
delays through the room sequence whenever a surgery exceeds its
allocated time.

Following the payment-based logic of
Section~\ref{sec:day-level-posture}, a hospital's relevant risk object
depends on how it is paid. HLA Moncloa operates under increasingly
common value-based and bundled-payment arrangements, where a single
disrupted day --- not the average week --- drives episode-level quality
penalties and payer relationships. We therefore adopt the worst-day
posture (M2), which our framework designates as natural for such
settings and which directly optimizes our core metric, the
worst-day failure probability $\varepsilon_{\max}$.

\subsection{Aggregate Results: Comparison Across Policies}
\label{sec:case-study-results}

Table~\ref{tab:policy-comparison} reports performance
metrics averaged across the ten instances. We compare the six
scheduling policies along seven performance metrics, all
averaged across instances and realizations. The worst-day failure probability
$\varepsilon_{\max}$ is the fraction of instance-days on which at
least one surgery exceeds its planned (buffered) duration, so that
lower values indicate more
reliable day-level performance. The number of surgeries delayed
counts, on average, how many surgeries per instance begin after their
planned start time,
capturing how widely disruption spreads across the day rather than
its magnitude. The maximum delay and the 95th-percentile delay
report, respectively, the
mean of the largest and of the 95th-percentile start-time slippage in
minutes, summarizing the extreme and upper-tail behavior of the delay
distribution. The
count of delays exceeding 90 minutes records the average number of
severely delayed surgeries per instance, a threshold of particular
operational concern. The
last-surgery delay gives the mean delay of the final surgery of the
day in minutes, measuring end-of-day slippage and the associated risk
of after-hours
running. Finally, overtime reports realized room overtime in minutes.

\begin{table}[t]
  \centering
  \caption{Policy comparison averaged across the ten case-study
  instances. All values are means across instances and realizations.}
  \label{tab:policy-comparison}
  \small
  \setlength{\tabcolsep}{4.5pt}
  \renewcommand{\arraystretch}{1.05}
  \begin{tabular}{@{}l ccccc c c@{}}
    \toprule
    \rowcolor{hdrgray}& $\varepsilon_{\max}$ & Surg.\ & Max & 95th pct.\ & Delays & Last-surg.\
    & Overtime \\
    \rowcolor{hdrgray}Policy & (worst day) & delayed & delay & delay & $>$90\,min & delay
    & (min)\\
    \midrule
    \multicolumn{8}{@{}l}{\textit{Reliability policies}}\\
    \quad $\alpha$-menu (robust)      & 0.20 & 0.1
    & 22 & 7 & 0.1 & 8 &  347 \\
    \quad Common $\alpha$ (best feas.)& 0.26 & 0.2 & 25 &  16 & 0.2 &
    19 &  391 \\
    \quad Common $\alpha=0.10$        & 0.29 & 0.2 & 25 &  18 & 0.2 &
    20 & 392 \\
    \midrule
    \multicolumn{8}{@{}l}{\textit{Planner heuristics}}\\
    \quad Deterministic ($\sigma=0$)  & 1.00 & 7.9 & 631 & 626 & 3.0
    & 251 & 597 \\
    \quad Fixed buffer $+30\%$         & 0.86 & 1.8 & 194 & 101 & 0.4
    &  65 &  598 \\
    \quad Fixed buffer $+50\%$         & 0.73 & 1.3 & 353 & 182 & 0.6
    & 119 & 603 \\
    \bottomrule
  \end{tabular}
\end{table}

The comparison separates two questions. Against the \emph{planner
heuristics}, the
robust $\alpha$-menu is dominant on every dimension: it cuts the
worst-day failure
probability from~$1.00$ (a bad day on every instance) to~$0.20$,
reduces the number of
delayed surgeries from~$7.9$ to~$0.1$, and compresses the delay tail
by more than an order of magnitude (95th-percentile delay $7$
  vs.\ $626$ minutes for the deterministic
baseline; $7$ vs.\ $182$ even against $+50\%$ uniform padding). Against the
\emph{scalar reliability policies}, the picture is more refined: the
menu achieves the
lowest worst-day failure probability ($0.20$ vs.\ $0.26$--$0.29$),
reflecting its ability
to concentrate protection on the cases that most endanger the day.
In addition, the robust schedule obtains a slightly smaller \emph{maximum}
single-surgery delay. Finally, the menu's overtime ($347$
minutes) is better than both the scalar policies and the planner
heuristics, confirming that its reliability gains do not come at a
systematic overtime
cost once reserved slack is reclaimed.

The $+50\%$ buffer does not uniformly improve on $+30\%$ (average
maximum delay $353$ vs.\ $194$ minutes). The reason is a packing
effect: inflating every buffer makes each surgery a larger, less
divisible block, and under binding room capacity larger blocks pack
more coarsely---some rooms end up with long sequential chains while
others are left with unused slack. A single early overrun in a long
chain then cascades through more downstream cases. The lesson is that
the \emph{amount} of uniform slack is not a reliable lever;
reliability depends on matching buffers to each surgery's variance,
as the $\alpha$-menu does.

\subsection{Mechanism Analysis: The Cascade-Control Test}
\label{sec:case-study-mechanism}

The aggregate tail reductions above show that robust scheduling
outperforms the deterministic baseline, but they do not reveal why.
Two mechanisms could produce these gains: per-surgery shock
absorption (each Cantelli buffer absorbs duration variability
before it propagates) and cascade control (residual shocks that
survive the buffer compound less across the day). The first is
visible directly in residual-overrun statistics; to test the
second, we run a paired regression on the realized schedules.

For each surgery $j$ at position $k$ in room $r$ on day $d$ in
instance $\ell$ under schedule $s\in\{\text{rob},\text{det}\}$, let
$D$ denote the realized delay (start-time slippage), $R_s = 1$ if
$s = \text{rob}$ and $0$ otherwise, and define the cumulative
upstream pressure as
\[
  P_{rdks\ell} \;=\; \sum_{k' < k}\,
  \max\!\bigl(0,\; X_{k'} - \mu_{k'}\bigr),
\]
where $X_{k'}$ and $\mu_{k'}$ denote the realized duration and the
population mean of the surgery at position $k'$ in room $r$ on day
$d$ under schedule $s$ in instance $\ell$. The quantity $P$ depends
only on realized data and population means, so both schedules face
the same physical disturbance at each position --- a necessary
property for a between-schedule comparison of how shocks propagate.
(Measuring shocks against the schedule's own buffered duration would
  conflate the buffer's absorption effect with the propagation we want
to isolate; we return to this in the Robustness discussion below.)
We then estimate
\begin{align*}
  D_{jrdks\ell} = {} & \beta_0 + \beta_1\,P + \beta_2\,k + \beta_3\,R_s \\
  & + \beta_4\,(R_s \times P) + \beta_5\,(k \times P) \\
  & + \beta_6\,(R_s \times k \times P) + \alpha_\ell +
  \varepsilon_{jrdks\ell},
\end{align*}
with instance fixed effects $\alpha_\ell$ and standard errors
clustered at the instance level $\ell$. The cascade-control
hypothesis predicts
$\beta_5 > 0$ (cascade build-up under deterministic) and $\beta_6 < 0$
(build-up suppressed under robust).

Table~\ref{tab:cascade-regression} reports the regression and
Figure~\ref{fig:cascade-propagation} plots the implied propagation
rate as a function of position. The fit explains 92\% of the
variation in realized delay and the joint test that $\beta_4 =
\beta_6 = 0$ is rejected at $p < 10^{-3}$ ($F = 153.3$). The
regression discriminates two distinct mechanisms by which the
robust schedule outperforms the deterministic baseline.

The first mechanism is surgery-level absorption. Before propagation
enters the picture, the Cantelli buffers absorb most of each
surgery's individual shock. The mean residual overrun (realized
minus planned) drops from 52 minutes per surgery under the
deterministic schedule to 17 minutes under the robust schedule ---
a 67\% reduction in the shock that would otherwise enter the
cascade. This is the operationally simplest mechanism: each surgery
has a buffered duration calibrated to a target reliability
$\alpha$, and under typical realized durations the surgery
completes within it.

The second mechanism is cascade-flattening across the day. The
residual shocks that survive the buffer enter the daily cascade,
where the regression shows a sharp structural difference. Under the
deterministic schedule, propagation grows from $0.21$ minutes of
delay per upstream-shock-minute at position 1 to $0.73$ at position
10 --- a cascade build-up, with $\beta_5 = +0.057$ capturing the
increase per position. Under the robust
schedule, the position-dependent growth is suppressed: $\beta_6 =
-0.043$ cancels most of the build-up, and
propagation rises only mildly from $0.38$ at position 1 to $0.51$
at position 10. The two propagation lines cross around position 5
(Figure~\ref{fig:cascade-propagation}): early in the day, the
deterministic schedule's inter-surgery gaps absorb residual shocks
somewhat better than the robust schedule does (which has packed its
slack into per-surgery buffers); by mid-day, the deterministic
schedule has exhausted those gaps, and from there on the robust
schedule is structurally more protected against further
compounding.

Together, these two mechanisms --- absorption at the surgery level
and flattening of the cascade across the day --- account for the
tail reductions reported in
Table~\ref{tab:policy-comparison}. The 67\%
reduction in residual overrun directly compresses each individual
delay; the absence of compounding with position under the robust
schedule prevents surgeries late in the day from inheriting the
worst of the day's accumulated shocks. The flattening result is
robust to alternative clustering of standard errors, controls for
surgery characteristics, instance exclusion, alternative
definitions of the outcome and the shock measure, and a quadratic
in position; details in \ECappref{ec:cascade-robustness} of
the Electronic Companion.

\begin{table}[tbp]
  \centering
  \caption{Cascade-control regression. Dependent variable:
    realized start delay (minutes). Standard errors clustered at
    the instance level; instance fixed effects suppressed.
    $n = 1219$ surgery-schedule observations across $10$ instances.
  $R^2 = 0.92$.}
  \label{tab:cascade-regression}
  \begin{tabular}{@{}lrr@{}}
    \toprule
    \rowcolor{hdrgray}Term & Coefficient & SE \\
    \midrule
    $P$ (cum.\ upstream pressure) & $+0.212^{***}$ & $0.058$ \\
    $R$ (robust indicator) & $-12.44^{***}$ & $0.43$ \\
    $R \times P$ ($\beta_4$) & $+0.171^{**}$ & $0.058$ \\
    $k$ (position-in-day) & $+8.99^{**}$ & $2.88$ \\
    $k \times P$ ($\beta_5$) & $+0.057^{***}$ & $0.002$ \\
    $R \times k \times P$ ($\beta_6$) &
    $-0.043^{***}$ & $0.003$ \\
    \midrule
    \multicolumn{3}{@{}l}{Joint $F$ test ($\beta_4 = \beta_6 = 0$):
    $F = 153.3$, $p < 10^{-3}$} \\
    \bottomrule
    \multicolumn{3}{@{}l}{\footnotesize $^{**}\,p < 0.01$;
    $^{***}\,p < 0.001$. Instance FE included.}
  \end{tabular}
\end{table}

\begin{figure}[t]
  \centering
  \includegraphics[width=0.55\textwidth]{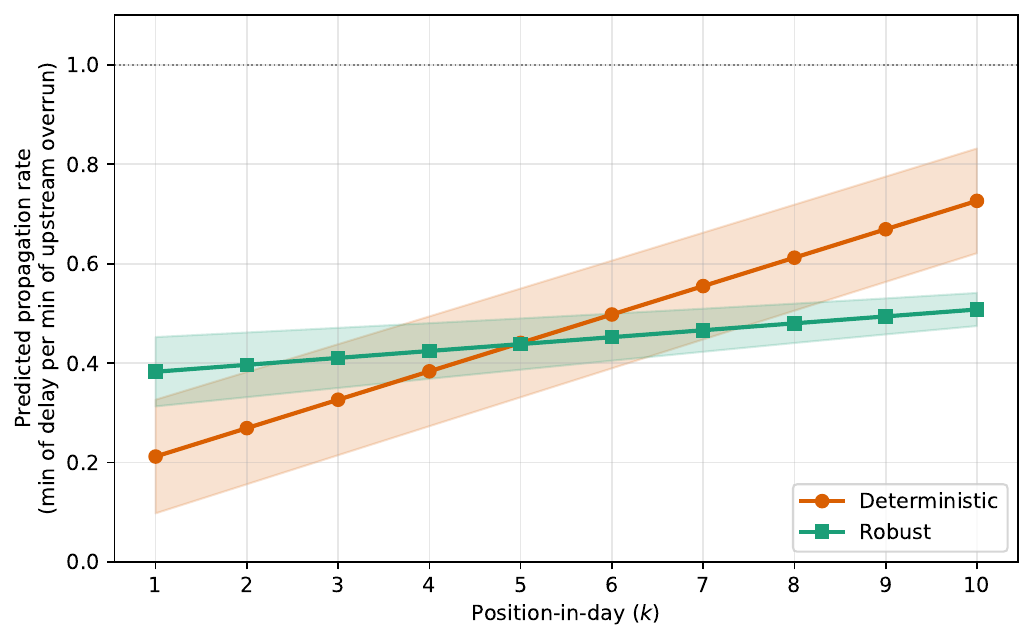}
  \caption{Implied propagation rate (minutes of delay per minute of
    cumulative upstream shock) by schedule and position-in-day, with
  95\% confidence bands.}
  \label{fig:cascade-propagation}
\end{figure}

\subsection{Managerial Implications}
\label{sec:case-study-implications}

The results of the case study carry three implications for how
hospitals should evaluate and adopt scheduling under uncertainty.

\paragraph{The value comes from controlling the tail.}
The most pronounced benefits of robust scheduling over the
deterministic approach appear in the right tail of the delay distribution.
Severe delays exceeding 90 minutes are nearly eliminated, falling from
$3.0$ to $0.1$ surgeries per instance, and the
$95$th-percentile delay drops from $626$ to $7$ minutes (a reduction of
roughly $620$ minutes, or ten hours). The robust $\alpha$-menu dominates
the deterministic baseline on every metric, cutting the worst-day failure
probability from $1.00$ to $0.20$, the number of delayed surgeries from
$7.9$ to $0.1$, and the end-of-day (last-surgery) delay from $251$ to
$8$ minutes.

The cascade-control regression of
Section~\ref{sec:case-study-mechanism} identifies the source: the
$\alpha$-menu's variance-aware buffers flatten the propagation curve,
insulating late-day surgeries from accumulated upstream drift. The
adoption question is whether buffers respond to the variance of
each procedure.

\paragraph{Two emergent consequences: less overtime and
endogenously balanced workload.}
Two findings run against the intuition that adding buffers must come
at a cost. First, total overtime falls by 42\% under the robust
schedule rather than increasing: planned slack absorbs realized
shocks before they become reactive overtime.
Second, although the model contains no term that balances workload, the robust
schedules are also more level across rooms than the
deterministic baseline, with the workload coefficient of variation
falling modestly from $0.32$ to $0.27$, a 15\% reduction. This mild
balancing is not
separately optimized for: it arises from the same mechanism as the
reliability gains --- placing buffers in proportion to each surgery's
duration variance --- so that reliability control and a more even
distribution of load emerge together from a single design choice.

\paragraph{The economic value extends beyond the operating room.}
The overtime reduction translates into avoided staffing and
facility costs, and the value of OR schedule reliability extends
beyond the OR itself: chronic unpredictability in the schedule
propagates through downstream wards, drives reliance on agency
staffing at premium rates, and contributes to OR nurse turnover.
The robust schedule eliminates approximately 250 minutes per
surgical week of overtime, or roughly 12{,}500 minutes per year
over $\sim$50 surgical weeks.
Table~\ref{tab:economic-impact} converts this into annualized cost
savings under three assumptions on the cost rate of OR overtime,
drawn from the published US literature on OR economics.

The figures are illustrative: the savings in minutes come from the
back test at HLA Moncloa, while the dollar conversion applies US
overtime cost rates for the OR \citep{Moody2020,
ChildersMaggardGibbons2018, Smith2022, DexterEpsteinMarsh2001}
and excludes the downstream effects on wards described above. If we read
it as an order of magnitude, the conservative scenario amounts to
roughly a quarter of a million dollars per year of avoided overtime
cost for a single surgical service of medium size.

\begin{table}[!htb]
  \centering
  \caption{Annualized overtime cost savings under three
    assumptions on the cost rate of OR overtime. Rates apply a
    $1.5\times$--$1.75\times$ overtime multiplier
    (\citealt{DexterEpsteinMarsh2001}) to regular-time benchmarks
  adjusted to 2024 USD.}
  \label{tab:economic-impact}
  \renewcommand{\arraystretch}{0.9}
  \begin{tabular}{@{}lrr@{}}
    \toprule
    \rowcolor{hdrgray}Scenario & Cost (\$/min) & Savings \\
    \midrule

    Conservative & \$22 & $\sim$\$0.28M \\
    Moderate     & \$60 & $\sim$\$0.75M \\
    Optimistic   & \$80 & $\sim$\$1.0M  \\
    \bottomrule
  \end{tabular}
\end{table}

\subsection{Robustness and Sensitivity}
\label{sec:case-study-robustness}

To ensure that the observed improvements are not driven by a
particular instance realization or parameter configuration, we
conduct several robustness checks.

\paragraph{Calibration check (nominal vs.\ realized durations).}
We check whether the gains merely reflect underestimated nominal
durations --- if $\hat\mu$ understates true durations, the robust
approach would only be correcting a biased mean. For each surgery we
compute $X/\hat\mu$, with $X$ the realized duration and $\hat\mu$
estimated out-of-sample from data before the window.
Table~\ref{tab:calibration} shows modest over-runs (surgery-weighted
mean $1.06$; per-instance range $0.96$--$1.20$), and the robust
schedule dominates the deterministic baseline at both ends ---
instance~7 (ratio $1.20$, worst-calibrated) and instance~5 (ratio
$0.96$, realized shorter than nominal).

\begin{table}[t]
  \centering
  \caption{Out-of-sample calibration of nominal duration estimates.
  Last row: surgery-count-weighted summary.}
  \label{tab:calibration}
  \footnotesize
  \setlength{\tabcolsep}{4pt}
  \renewcommand{\arraystretch}{0.95}
  \begin{tabular}{@{}cccccc@{}}
    \toprule
    \rowcolor{hdrgray}& \multicolumn{2}{c}{Instance design} &  &
    \multicolumn{2}{c}{Calibration} \\
    \cmidrule(lr){2-3}\cmidrule(l){5-6}
    \rowcolor{hdrgray}Instance $\ell$ & $L_\ell$ (days) & $R_\ell$ (rooms) & \# Surg. &
    $\overline{X/\hat{\mu}}$ & $\mathrm{med}(X/\hat{\mu})$ \\
    \midrule
    1  & 2 & 6  & 58  & 0.98 & 0.97 \\
    2  & 5 & 10 & 214 & 1.07 & 1.04 \\
    3  & 4 & 7  & 96  & 1.03 & 1.02 \\
    4  & 3 & 9  & 145 & 1.12 & 1.09 \\
    5  & 2 & 8  & 77  & 0.96 & 0.95 \\
    6  & 5 & 9  & 227 & 1.05 & 1.03 \\
    7  & 2 & 3  & 45  & 1.20 & 1.15 \\
    8  & 4 & 8  & 180 & 1.10 & 1.07 \\
    9  & 2 & 3  & 45  & 1.01 & 1.00 \\
    10 & 3 & 7  & 132 & 1.06 & 1.05 \\
    \midrule
    All & \multicolumn{2}{c}{--} & 1219 & 1.06 & 1.04 \\
    \bottomrule
  \end{tabular}
\end{table}

\paragraph{The menu allocates reliability heterogeneously.} Across the ten
instances the optimizer spreads surgeries over all eight menu levels
rather than collapsing onto one: $4.0\%$ at $\alpha=0.005$, $3.2\%$ at
$0.01$, $12.1\%$ at $0.02$, $50.8\%$ at $0.03$, $6.5\%$ at $0.04$,
$4.8\%$ at $0.05$, $0.8\%$ at $0.075$, and $17.7\%$ at $0.10$. This is
the mechanism of Section~\ref{sec:exp-risk-postures} at work on field
data: a case's capacity cost of protection scales with $\sigma_j$
while its reliability-budget cost is $\sigma$-independent, so the
optimizer tightens reliability where it is comparatively cheap ---
something a single global $\alpha$ cannot do, and which is muted only
when idle capacity is abundant.

\section{Conclusions}
\label{sec:conclusions}

In this paper we develop a robust chance-constrained framework for
elective surgery scheduling under uncertain durations. The framework
is built around a modular $\alpha$-menu architecture: a finite set of
reliability levels $\{\alpha_1, \dots, \alpha_T\}$ defines per-surgery
buffered durations, and the optimizer selects one level per case
subject to a day-level reliability budget. The mapping that produces
buffered durations from reliability levels is implemented by
interchangeable engines --- Cantelli (moment-based),
Wasserstein-$W_\infty$ (max-shift), and Wasserstein-$W_1$
(average-shift) --- that share a common interface with the rest of
the model. Three risk-posture variants (M1 average, M2 worst-day,
M3 hard-target) embed the reliability budget into the objective in
different ways, each corresponding to a distinct way a hospital can
internalize tail risk. We evaluate the framework on ten
rolling-origin instances drawn from HLA Moncloa Hospital and identify
the empirical mechanism through which it delivers its gains.

As discussed in Section~\ref{sec:day-level-posture}, the three
postures map to distinct payment environments --- average-reliability M1
to throughput-driven (fee-for-service) settings, worst-day M2 to
value-based or reputationally exposed ones (bundled payments, Spain's
\emph{concierto}, NHS block contracts), and hard-target M3 to SLA or
regulatory floors; M3 is also the most tractable, requiring no weight
parameter.
In the synthetic experiments, M2 achieves day-level reliability
within a band of less
than one percentage point across all five days of the planning
horizon --- a degree of uniformity that M1 cannot match even at very
high reliability weights. The
framework thus makes the alignment between scheduling objective and
financial risk structure explicit: a hospital adopting
M1 under contracts that penalize tail events is misaligned with its
risk environment, just as one adopting M2 under pure fee-for-service
may be overinvesting in robustness relative to its actual exposure.

The case study quantifies the operational
value of the framework: the
robust schedule cuts severe delays by 97\% and total overtime by
42\% relative to the deterministic baseline, translating to roughly
\$0.3--\$1.0 million per year of avoided overtime cost under
standard US OR rates. The cascade-control regression identifies the
source of these gains: variance-aware buffers --- slack scaled with each
surgery's duration uncertainty rather than a uniform inflation of nominal
durations. The framework's value should therefore be largest where
two conditions hold: rooms host long chains of consecutive surgeries
on the same day (so the cascade has room to build) and surgery
variance differs widely across cases (so allocating slack
selectively pays off relative to a uniform allocation). Hospitals
with short rooms, with one surgery per day, or with little variance
across procedures are expected to see proportionally smaller gains.

The present work has two scope limitations. First, we model only
elective surgeries. Emergencies can be added to the framework in two
ways without changing the core formulation. If the emergency
is expected (a
typical daily volume of trauma cases, for example), it can be
included up front as an anticipated emergency case with
duration drawn from
historical emergency data; the model then schedules it like any
other case and leaves a gap of the appropriate size. If the
emergency arrives during the day, the model can be re-solved on the
fly: surgeries already completed are locked in place, the new case
is inserted, and the rest of the day is rescheduled. In both cases,
the buffers placed on elective surgeries provide slack that absorbs
part of the emergency. Second, our estimates and the Wasserstein
radius $\rho$ are calibrated under an i.i.d.\ assumption, whereas
hospital duration distributions are inherently nonstationary ---
protocols evolve, surgical teams change, patient demographics
shift --- so $\rho$ can be reinterpreted as protection against
distributional drift rather than only against sampling noise.
\citet{KeehanAndersonWiesemann2025} develop principled methods for
setting $\rho$ when historical observations are generated by a
time-varying process; integrating these methods into the
buffer-engine layer would complete the data-driven pipeline from
historical records to robust schedules. More broadly, the results
suggest that reliability is best treated as an allocable scheduling
resource: when duration uncertainty is heterogeneous and delays cascade
through the day, choosing \emph{where} to place protection can
matter as much
as choosing \emph{how much} to add.

\bibliographystyle{plainnat}
\bibliography{references}

@article{Gupta2007,
  author  = {Gupta, Diwakar},
  title   = {Surgical Suites' Operations Management},
  journal = {Production and Operations Management},
  year    = {2007},
  volume  = {16},
  number  = {6},
  pages   = {689--700},
  doi     = {10.1111/j.1937-5956.2007.tb00289.x}
}

@article{Cardoen2010,
  author    = {Cardoen, Brecht and Demeulemeester, Erik and Beli{\"e}n, Jeroen},
  title     = {Operating room planning and scheduling: A literature review},
  journal   = {European Journal of Operational Research},
  volume    = {201},
  number    = {3},
  pages     = {921--932},
  year      = {2010},
  doi       = {10.1016/j.ejor.2009.04.011}
}

@article{MaySpanglerStrumVargas2011,
  author  = {May, Jerrold H. and Spangler, William E. and Strum, David P. and Vargas, Luis G.},
  title   = {The Surgical Scheduling Problem: Current Research and Future Opportunities},
  journal = {Production and Operations Management},
  year    = {2011},
  volume  = {20},
  number  = {3},
  pages   = {392--405},
  doi     = {10.1111/j.1937-5956.2011.01221.x}
}

@article{LamiriXieDolguiGrimaud2008,
  author  = {Lamiri, Mehdi and Xie, Xiaolan and Dolgui, Alexandre and Grimaud, Fr{\'e}d{\'e}ric},
  title   = {A Stochastic Model for Operating Room Planning with Elective and Emergency Demand for Surgery},
  journal = {European Journal of Operational Research},
  year    = {2008},
  volume  = {185},
  number  = {3},
  pages   = {1026--1037},
  doi     = {10.1016/j.ejor.2006.02.057}
}

@article{DentonMillerBalasubramanianHuschka2010,
  author  = {Denton, Brian T. and Miller, Andrew J. and Balasubramanian, Hari and Huschka, Todd R.},
  title   = {Optimal Allocation of Surgery Blocks to Operating Rooms Under Uncertainty},
  journal = {Operations Research},
  year    = {2010},
  volume  = {58},
  number  = {4},
  pages   = {802--816},
  doi     = {10.1287/opre.1090.0791}
}

@article{JebaliDiabat2017,
  author  = {Jebali, Aida and Diabat, Ali},
  title   = {A Chance-Constrained Operating Room Planning with Elective and Emergency Cases under Downstream Capacity Constraints},
  journal = {Computers \& Industrial Engineering},
  year    = {2017},
  volume  = {114},
  pages   = {329--344},
  doi     = {10.1016/j.cie.2017.07.015}
}

@article{RoshanaeiLuongAlemanUrbach2017,
  author  = {Roshanaei, Vahid and Luong, Curtiss and Aleman, Dionne M. and Urbach, David R.},
  title   = {Collaborative Operating Room Planning and Scheduling},
  journal = {INFORMS Journal on Computing},
  year    = {2017},
  volume  = {29},
  number  = {3},
  pages   = {558--580},
  doi     = {10.1287/ijoc.2017.0745}
}

@article{WangZhangTang2019,
  author  = {Wang, Yu and Zhang, Yu and Tang, Jiafu},
  title   = {A Distributionally Robust Optimization Approach for Surgery Block Allocation},
  journal = {European Journal of Operational Research},
  year    = {2019},
  volume  = {273},
  number  = {2},
  pages   = {740--753},
  doi     = {10.1016/j.ejor.2018.08.037}
}

@article{BandiGupta2020,
  author  = {Bandi, Chaithanya and Gupta, Diwakar},
  title   = {Operating Room Staffing and Scheduling},
  journal = {Manufacturing \& Service Operations Management},
  year    = {2020},
  volume  = {22},
  number  = {5},
  pages   = {958--974},
  doi     = {10.1287/msom.2019.0781}
}

@article{ShehadehPadman2021,
  author  = {Shehadeh, Karmel S. and Padman, Rema},
  title   = {A Distributionally Robust Optimization Approach for Stochastic Elective Surgery Scheduling with Limited Intensive Care Unit Capacity},
  journal = {European Journal of Operational Research},
  year    = {2021},
  volume  = {290},
  number  = {3},
  pages   = {901--913},
  doi     = {10.1016/j.ejor.2020.09.001}
}

@article{CarewNagarajanShechterArnejaSkarsgard2021,
  author  = {Carew, Stephanie and Nagarajan, Mahesh and Shechter, Steven and Arneja, Jugpal and Skarsgard, Erik},
  title   = {Dynamic Capacity Allocation for Elective Surgeries: Reducing Urgency-Weighted Wait Times},
  journal = {Manufacturing \& Service Operations Management},
  year    = {2021},
  volume  = {23},
  number  = {2},
  pages   = {407--424},
  doi     = {10.1287/msom.2019.0846}
}

@article{NaderiRoshanaeiBegenAlemanUrbach2021,
  author  = {Naderi, Bahman and Roshanaei, Vahid and Begen, Mehmet A. and Aleman, Dionne M. and Urbach, David R.},
  title   = {Increased Surgical Capacity without Additional Resources: Generalized Operating Room Planning and Scheduling},
  journal = {Production and Operations Management},
  year    = {2021},
  volume  = {30},
  number  = {8},
  pages   = {2608--2635},
  doi     = {10.1111/poms.13397}
}

@article{ZhouParlarVerterFraser2021,
  author  = {Zhou, Yun and Parlar, Mahmut and Verter, Vedat and Fraser, Shannon},
  title   = {Surgical Scheduling with Constrained Patient Waiting Times},
  journal = {Production and Operations Management},
  year    = {2021},
  volume  = {30},
  number  = {9},
  pages   = {3253--3271},
  doi     = {10.1111/poms.13427}
}

@article{AzarCarrascoMondschein2022,
  author  = {Azar, Macarena and Carrasco, Rodrigo A. and Mondschein, Susana},
  title   = {Dealing with Uncertain Surgery Times in Operating Room Scheduling},
  journal = {European Journal of Operational Research},
  year    = {2022},
  volume  = {299},
  number  = {1},
  pages   = {377--394},
  doi     = {10.1016/j.ejor.2021.09.010}
}

@article{Shehadeh2022Wasserstein,
  author  = {Shehadeh, Karmel S.},
  title   = {Data-Driven Distributionally Robust Surgery Planning in Flexible Operating Rooms over a {W}asserstein Ambiguity},
  journal = {Computers \& Operations Research},
  year    = {2022},
  volume  = {146},
  pages   = {105927},
  doi     = {10.1016/j.cor.2022.105927}
}

@article{WangZhangZhouTang2023,
  author  = {Wang, Yu and Zhang, Yu and Zhou, Minglong and Tang, Jiafu},
  title   = {Feature-Driven Robust Surgery Scheduling},
  journal = {Production and Operations Management},
  year    = {2023},
  volume  = {32},
  number  = {6},
  pages   = {1921--1938},
  doi     = {10.1111/poms.13949}
}

@article{WangZhangTang2024,
  author  = {Wang, Yu and Zhang, Yu and Tang, Jiafu},
  title   = {Wasserstein Distributionally Robust Surgery Scheduling with Elective and Emergency Patients},
  journal = {European Journal of Operational Research},
  year    = {2024},
  volume  = {314},
  number  = {2},
  pages   = {509--522},
  doi     = {10.1016/j.ejor.2023.10.026}
}

@article{FuQiYangYe2024,
  author  = {Fu, Xiaojin and Qi, Jin and Yang, Chen and Ye, Han},
  title   = {Elective Surgery Sequencing and Scheduling Under Uncertainty},
  journal = {Manufacturing \& Service Operations Management},
  year    = {2024},
  volume  = {26},
  number  = {3},
  pages   = {893--910},
  doi     = {10.1287/msom.2022.0029}
}

@article{DengShenDenton2019,
  author  = {Deng, Yan and Shen, Siqian and Denton, Brian T.},
  title   = {Chance-Constrained Surgery Planning Under Conditions of Limited and Ambiguous Data},
  journal = {INFORMS Journal on Computing},
  year    = {2019},
  volume  = {31},
  number  = {3},
  pages   = {559--575},
  doi     = {10.1287/ijoc.2018.0835}
}

@article{TsangEtAl2025,
  author  = {Tsang, Man Yiu and Shehadeh, Karmel S. and Curtis, Frank E. and Hochman, Beth R. and Brentjens, Tricia E.},
  title   = {Stochastic Optimization Approaches for an Operating Room and Anesthesiologist Scheduling Problem},
  journal = {Operations Research},
  year    = {2025},
  volume  = {73},
  number  = {3},
  pages   = {1430--1458},
  doi     = {10.1287/opre.2022.0258}
}

@article{FreemanMeloukMittenthal2016,
  author  = {Freeman, Nickolas K. and Melouk, Sharif H. and Mittenthal, John},
  title   = {A Scenario-Based Approach for Operating Theater Scheduling Under Uncertainty},
  journal = {Manufacturing \& Service Operations Management},
  year    = {2016},
  volume  = {18},
  number  = {2},
  pages   = {245--261},
  doi     = {10.1287/msom.2015.0557}
}

@article{GulDentonFowlerHuschkaBard2011,
  author  = {Gul, Serhan and Denton, Brian T. and Fowler, James W. and Huschka, Todd R. and Bard, Jonathan F.},
  title   = {Bi-Criteria Scheduling of Surgical Services for an Outpatient Surgery Center},
  journal = {Production and Operations Management},
  year    = {2011},
  volume  = {20},
  number  = {3},
  pages   = {406--417},
  doi     = {10.1111/j.1937-5956.2011.01232.x}
}

@article{MakRongZhang2015,
  author  = {Mak, Ho-Yin and Rong, Ying and Zhang, Jiawei},
  title   = {Appointment Scheduling with Limited Distributional Information},
  journal = {Management Science},
  year    = {2015},
  volume  = {61},
  number  = {2},
  pages   = {316--334},
  doi     = {10.1287/mnsc.2013.1881}
}

@article{BertsimasPopescu2005,
  author    = {Bertsimas, Dimitris and Popescu, Ioana},
  title     = {Optimal inequalities in probability theory: A convex optimization approach},
  journal   = {SIAM Journal on Optimization},
  volume    = {15},
  number    = {3},
  pages     = {780--804},
  year      = {2005},
  doi       = {10.1137/S1052623401399903}
}

@article{MohajerinKuhn2018,
  author    = {Mohajerin Esfahani, Peyman and Kuhn, Daniel},
  title     = {Data-driven distributionally robust optimization using the {W}asserstein metric: performance guarantees and tractable reformulations},
  journal   = {Mathematical Programming},
  volume    = {171},
  number    = {1--2},
  pages     = {115--166},
  year      = {2018},
  doi       = {10.1007/s10107-017-1172-1}
}

@misc{KeehanAndersonWiesemann2025,
  author       = {Keehan, Dominic S. T. and Anderson, Edward J. and Wiesemann, Wolfram},
  title        = {Don't Look Back in Anger: {W}asserstein Distributionally Robust Optimization with Nonstationary Data},
  year         = {2025},
  howpublished = {arXiv preprint arXiv:2510.18566},
  url          = {https://arxiv.org/abs/2510.18566}
}

@article{DexterEpsteinMarsh2001,
  author  = {Dexter, Franklin and Epstein, Richard H. and Marsh, H. Michael},
  title   = {A Statistical Analysis of Weekday Operating Room Anesthesia Group Staffing Costs at Nine Independently Managed Surgical Suites},
  journal = {Anesthesia \& Analgesia},
  year    = {2001},
  volume  = {92},
  number  = {6},
  pages   = {1493--1498},
  doi     = {10.1097/00000539-200106000-00028}
}

@article{ChildersMaggardGibbons2018,
  author  = {Childers, Christopher P. and Maggard-Gibbons, Melinda},
  title   = {Understanding Costs of Care in the Operating Room},
  journal = {JAMA Surgery},
  year    = {2018},
  volume  = {153},
  number  = {4},
  pages   = {e176233},
  doi     = {10.1001/jamasurg.2017.6233}
}

@article{Moody2020,
  author  = {Moody, Alastair E. and Gurnea, Taylor P. and Shul, Craig P. and Althausen, Peter L.},
  title   = {True Cost of Operating Room Time: Implications for an Orthopaedic Trauma Service},
  journal = {Journal of Orthopaedic Trauma},
  year    = {2020},
  volume  = {34},
  number  = {5},
  pages   = {271--275},
  doi     = {10.1097/BOT.0000000000001688}
}

@article{Smith2022,
  author  = {Smith, Tyler and Evans, Justin and Moriel, Karla and Tihista, Mikel and Bacak, Christopher and Dunn, John and Rajani, Rajiv and Childs, Benjamin},
  title   = {Cost of {OR} Time is \${46.04} per Minute},
  journal = {Journal of Orthopaedic Business},
  year    = {2022},
  volume  = {2},
  number  = {4},
  pages   = {10--13},
  doi     = {10.55576/job.v2i4.23}
}

\ECSwitch

\ECHead{Proofs and Supplemental Material}

\section{Cantelli: Tightness Construction}
\label{ec:cantelli}
Cantelli's inequality is tight: for any $a>0$, there exists a distribution with mean $\mu$ and variance $\sigma^2$ that achieves equality in \eqref{eq:cantelli}. One such extremal distribution is the two-point distribution
\begin{equation}
\label{eq:cantelli-tight-dist}
X =
\begin{cases}
\mu - \dfrac{\sigma^2}{a} & \text{with probability } \dfrac{a^2}{\sigma^2+a^2}, \\[10pt]
\mu + a & \text{with probability } \dfrac{\sigma^2}{\sigma^2+a^2}.
\end{cases}
\end{equation}
A direct verification shows this distribution has mean $\mu$, variance $\sigma^2$, and satisfies
$\Pr(X-\mu\ge a)=\sigma^2/(\sigma^2+a^2)$.

\section{Wasserstein $W_\infty$: Proof of Proposition~\ref{prop:Winf-buffer}}
\label{ec:winf}
\proof{Proof of Proposition~\ref{prop:Winf-buffer}.}
Under the constraint $W_\infty(Q,\hat{P}_N)\le \rho$, each Dirac mass at $x_i$ can be transported to any point in $[x_i-\rho,\;x_i+\rho]$. For any threshold $\tau$, the exceedance probability is maximized by transporting every point to $x_i+\rho$, which produces a shifted empirical distribution. Therefore,
\[
\sup_{Q:W_\infty(Q,\hat{P}_N)\le \rho} Q(X>\tau)
=
\hat{P}_N(X>\tau-\rho).
\]
Let $\hat{Q}_{1-\alpha}$ denote the empirical $(1-\alpha)$-quantile. Choosing $\tau=\hat{Q}_{1-\alpha}+\rho$ ensures
$\hat{P}_N(X>\tau-\rho)=\hat{P}_N(X>\hat{Q}_{1-\alpha})\le \alpha$,
so the buffer is feasible. Conversely, for any
$p<\hat{Q}_{1-\alpha}+\rho$ we have
$\hat{P}_N(X\le p-\rho)<1-\alpha$ by definition of the (left)
empirical quantile, hence the worst-case exceedance
$\hat{P}_N(X>p-\rho)$ exceeds $\alpha$; thus
$\hat{Q}_{1-\alpha}+\rho$ is also the smallest feasible buffered
duration, which proves the claim.
\Halmos
\endproof

\section{Wasserstein $W_1$: Proof of Proposition~\ref{prop:W1-tail}}
\label{ec:w1-proof}
\proof{Proof of Proposition~\ref{prop:W1-tail}.}
Under $\widehat P_N=(1/N)\sum_{i=1}^N\delta_{x_i}$, each sample $x_i$ carries
mass $1/N$. Any mass already above $\tau$ may be left in place at zero cost,
contributing $\frac1N\sum_{i=1}^N \mathbf 1\{x_i>\tau\}$ to the tail probability.
For each $i$ with $x_i\le \tau$, let $z_i\in[0,1]$ denote the fraction of the
atom at $x_i$ transported into $(\tau,\infty)$; such mass moves at least distance
$\tau-x_i$, so any coupling with average transport cost at most $\rho$ satisfies
$\frac1N\sum_{i:x_i\le\tau}(\tau-x_i)z_i\le \rho$. Hence the added tail mass is at
most the fractional knapsack value
$\frac1N \max_z\{\sum_{i:x_i\le\tau} z_i:
\sum_{i:x_i\le\tau}(\tau-x_i)z_i\le N\rho,\;0\le z_i\le 1\}$. Conversely, moving
those fractions from $x_i$ to $\tau+\delta$ and letting $\delta\downarrow0$
attains this value, so the bound is exact. Since each unit of moved mass has the
same value and cost $\tau-x_i$, the knapsack is solved greedily---spend the
budget on the atoms closest to $\tau$ from below, with at most one partial
atom---an $O(N)$ scan on sorted samples.
\Halmos
\endproof

\section{Sample-path Feasibility: Full Proof of Proposition~\ref{prop:sample-path}}
\label{ec:samplepath}

\MV{The probabilistic bounds asserted in the proposition are already established
in Section~\ref{sec:probabilistic-interp} of the main paper: the
independence bound~\eqref{eq:eps-bound} and the dependence-robust union
bound~\eqref{eq:union-bound-main}. What remains is the underlying sample-path inclusion $E_d\subseteq F_d$, from which
$\Pr(F_d^c)\le\Pr(E_d^c)$ and hence both bounds follow.}

\proof{Proof of Proposition~\ref{prop:sample-path}.}
Fix a day $d$ and a feasible solution of M$_b$. For each surgery $j\in\mathcal J_d$,
let $(r(j),k(j),t(j))$ denote its selected room, surgeon, and menu level, so that
$w_{j\,d\,r(j)\,k(j)\,t(j)}=1$. By constraints
\eqref{eq:core-z}--\eqref{eq:core-dsg}, this implies $z_{j\,d\,r(j)}=1$ and
$y_{j\,d\,k(j)}=1$, and the buffered duration assigned to surgery $j$ is
$\delta_j:=p_{j\,r(j)\,k(j)}^{(t(j))}=\delta^{\mathrm{rm}}_{j\,d\,r(j)}=\delta^{\mathrm{sg}}_{j\,d\,k(j)}$.

Now fix any outcome $\omega\in E_d$. By definition of $E_d$,
$X_{j\,r(j)\,k(j)}(\omega)\le \delta_j$ for all $j\in\mathcal J_d$.

We prove the three claims.

\medskip
\noindent\textbf{(i) No overlap on any room timeline.}
Fix a room $r\in\mathcal R$ and any two distinct surgeries $j,i\in\mathcal J_d$ assigned to room $r$ on day $d$. Without loss of generality, assume $j<i$. Since $z_{jdr}=z_{idr}=1$,
feasibility of M$_b$ and constraints \eqref{eq:room-seq1}--\eqref{eq:room-seq2} imply
\begin{align}
s^{\mathrm{rm}}_{jdr}+\delta^{\mathrm{rm}}_{jdr}
&\le s^{\mathrm{rm}}_{idr}
+ M_d^{\mathrm{rm}}(1-u^{\mathrm{rm}}_{ji\,dr}),
\label{eq:sp-room-1}\\
s^{\mathrm{rm}}_{idr}+\delta^{\mathrm{rm}}_{idr}
&\le s^{\mathrm{rm}}_{jdr}
+ M_d^{\mathrm{rm}}u^{\mathrm{rm}}_{ji\,dr}.
\label{eq:sp-room-2}
\end{align}
Because $u^{\mathrm{rm}}_{ji\,dr}\in\{0,1\}$, either
$s^{\mathrm{rm}}_{jdr}+\delta^{\mathrm{rm}}_{jdr}\le s^{\mathrm{rm}}_{idr}$ or
$s^{\mathrm{rm}}_{idr}+\delta^{\mathrm{rm}}_{idr}\le s^{\mathrm{rm}}_{jdr}$.
Thus the two surgeries do not overlap in buffered time on room $r$.

Since $\omega\in E_d$, realized durations are no larger than buffered durations. Therefore buffered completion times dominate realized completion times, and the same precedence relation remains valid in realization $\omega$. Hence surgeries $j$ and $i$ do not overlap in realized time on room $r$.

Because the pair $(j,i)$ was arbitrary, no two surgeries assigned to room $r$ overlap in realization $\omega$. Since $r$ was arbitrary, no room timeline overlap occurs on day $d$.

\medskip
\noindent\textbf{(ii) No overlap on any surgeon timeline.}
Fix a surgeon $k\in\mathcal K$ and any two distinct surgeries $j,i\in\mathcal J_d$ assigned to surgeon $k$ on day $d$. Without loss of generality, assume $j<i$. Since $y_{jdk}=y_{idk}=1$,
constraints \eqref{eq:sg-seq1}--\eqref{eq:sg-seq2} imply
\begin{align}
s^{\mathrm{sg}}_{jdk}+\delta^{\mathrm{sg}}_{jdk}
&\le s^{\mathrm{sg}}_{idk}
+ M_d^{\mathrm{sg}}(1-u^{\mathrm{sg}}_{ji\,dk}),
\label{eq:sp-sg-1}\\
s^{\mathrm{sg}}_{idk}+\delta^{\mathrm{sg}}_{idk}
&\le s^{\mathrm{sg}}_{jdk}
+ M_d^{\mathrm{sg}}u^{\mathrm{sg}}_{ji\,dk}.
\label{eq:sp-sg-2}
\end{align}
Again, because $u^{\mathrm{sg}}_{ji\,dk}\in\{0,1\}$, either
$s^{\mathrm{sg}}_{jdk}+\delta^{\mathrm{sg}}_{jdk}\le s^{\mathrm{sg}}_{idk}$ or
$s^{\mathrm{sg}}_{idk}+\delta^{\mathrm{sg}}_{idk}\le s^{\mathrm{sg}}_{jdk}$.
Hence the two surgeries do not overlap in buffered time on surgeon $k$'s timeline.

Since $\omega\in E_d$, realized durations do not exceed buffered durations, so the same precedence relation remains valid in realized time. Therefore surgeries $j$ and $i$ do not overlap on surgeon $k$ in realization $\omega$.

As the pair and surgeon were arbitrary, no surgeon timeline overlap occurs on day $d$.

\medskip
\noindent\textbf{(iii) Completion within planned horizons plus overtime.}
Let $j\in\mathcal J_d$ be any surgery, with selected room $r(j)$ and surgeon $k(j)$.
Since $z_{j\,d\,r(j)}=1$, constraint \eqref{eq:core-room-comp} gives
$s^{\mathrm{rm}}_{j\,d\,r(j)}+\delta^{\mathrm{rm}}_{j\,d\,r(j)}\le H_d + o^{\mathrm{rm}}_{d\,r(j)}$.
Because $\omega\in E_d$,
$X_{j\,r(j)\,k(j)}(\omega)\le \delta^{\mathrm{rm}}_{j\,d\,r(j)}$, and therefore
$s^{\mathrm{rm}}_{j\,d\,r(j)}+X_{j\,r(j)\,k(j)}(\omega)\le H_d + o^{\mathrm{rm}}_{d\,r(j)}$.
Thus surgery $j$ completes within the planned room horizon plus allowed room overtime.

Similarly, since $y_{j\,d\,k(j)}=1$, constraint \eqref{eq:core-sg-comp} gives
$s^{\mathrm{sg}}_{j\,d\,k(j)}+\delta^{\mathrm{sg}}_{j\,d\,k(j)}\le C_{k(j)d}+o^{\mathrm{sg}}_{d\,k(j)}$.
Using again that $\omega\in E_d$,
$X_{j\,r(j)\,k(j)}(\omega)\le \delta^{\mathrm{sg}}_{j\,d\,k(j)}$, hence
$s^{\mathrm{sg}}_{j\,d\,k(j)}+X_{j\,r(j)\,k(j)}(\omega)\le C_{k(j)d}+o^{\mathrm{sg}}_{d\,k(j)}$.
Thus surgery $j$ also completes within the planned surgeon horizon plus allowed surgeon overtime.

Combining (i)--(iii), the realized schedule on day $d$ is feasible for every outcome $\omega\in E_d$.
\Halmos
\endproof

\section{Arbitrary Dependence and Worst-Case Tightness of the Linear Budget}
\label{sec:ec-bonferroni}

This section develops the arbitrary-dependence counterpart of the log-budget interpretation. The marginal Cantelli guarantee holds regardless of dependence; only the day-level aggregation changes, with the union bound replacing the product formula.

Fix a day $d$. For each surgery $j\in\mathcal J_d$, let $E_j := \{X_j \le p_j\}$ and $E_j^c := \{X_j > p_j\}$, where $p_j$ is the buffered duration induced by the chosen assignment and menu level, and set $E_d := \bigcap_{j\in\mathcal J_d} E_j$, $E_d^c := \bigcup_{j\in\mathcal J_d} E_j^c$. If surgery $j$ is assigned level $\alpha_j:=\alpha_{t(j)}$, the single-surgery Cantelli guarantee gives $\Pr(E_j^c)\le \alpha_j$, so without any independence assumption,
\begin{equation}
\label{eq:ec-union-bound}
\Pr(E_d^c)
=
\Pr\!\Bigl(\bigcup_{j\in\mathcal J_d} E_j^c\Bigr)
\le
\sum_{j\in\mathcal J_d}\Pr(E_j^c)
\le
\sum_{j\in\mathcal J_d}\alpha_j.
\end{equation}
This motivates the day-specific linear reliability budget $\sum_{j\in\mathcal J_d}\alpha_j \le \varepsilon_d$, which guarantees $\Pr(E_d^c)\le \varepsilon_d$ under arbitrary dependence. The next proposition shows this bound is worst-case tight: it is the best universal day-level guarantee obtainable from the marginal Cantelli bounds alone.

\begin{proposition}[Worst-case tightness of the linear budget]
\label{prop:bonferroni-tight}
Fix a day $d$ with surgeries $j=1,\dots,m$, buffers
$p_j=\mu_j+\kappa_j\sigma_j$ ($\kappa_j=\sqrt{(1-\alpha_j)/\alpha_j}$,
$\alpha_j\in(0,1)$, $\sigma_j>0$), and $E_j^c:=\{X_j>p_j\}$; assume
$\sum_{j=1}^m\alpha_j\le1$. Then for every $\eta>0$ there is a joint distribution
of $(X_1,\dots,X_m)$ under which each marginal has mean $\mu_j$, variance
$\sigma_j^2$, and $\Pr(E_j^c)\le\alpha_j$, the events $E_1^c,\dots,E_m^c$ are
mutually exclusive, and $\Pr\bigl(\bigcup_{j=1}^m E_j^c\bigr)\ge\sum_{j=1}^m\alpha_j-\eta$.
Consequently no dependence-robust guarantee $\Pr(E_d^c)\le\varepsilon_d$ can hold
uniformly over all joint distributions with the prescribed marginal means and
variances whenever $\varepsilon_d<\sum_{j=1}^m\alpha_j$.
\end{proposition}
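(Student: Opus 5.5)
The plan is to build the extremal joint law from Cantelli two-point marginals, as in \ECappref{ec:cantelli}, and to drive them all from one common uniform random variable.

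\textbf{Step 1 (reduction).} It suffices to treat $0<\eta<m\min_j\alpha_j$. For larger $\eta$, the construction for a smaller $\eta'$ in that range already gives $\Pr(\bigcup_j E_j^c)\ge\sum_j\alpha_j-\eta'\ge\sum_j\alpha_j-\eta$. Set $q_j:=\alpha_j-\eta/m\in(0,\alpha_j)$, so that $\sum_j q_j=\sum_j\alpha_j-\eta<1$.

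\textbf{Step 2 (marginals).} For each $j$, let $X_j$ be two-point:
\[
X_j=\mu_j+c_j \text{ with probability } q_j, \qquad X_j=\mu_j-\frac{q_jc_j}{1-q_j} \text{ with probability } 1-q_j,
\]
where $c_j:=\sigma_j\sqrt{(1-q_j)/q_j}$. A direct computation, the same as for \eqref{eq:cantelli-tight-dist}, gives mean $\mu_j$ and variance $q_jc_j^2/(1-q_j)=\sigma_j^2$. The upper atom exceeds $p_j$ strictly. Indeed, $(1-q)/q$ is strictly decreasing in $q$ and $q_j<\alpha_j$, so
\[
c_j>\sigma_j\sqrt{(1-\alpha_j)/\alpha_j}=\kappa_j\sigma_j.
\]
The lower atom lies below $\mu_j<p_j$. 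Hence $E_j^c$ is exactly the event that $X_j$ sits at its upper atom, and $\Pr(E_j^c)=q_j\le\alpha_j$.

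\textbf{Step 3 (coupling).} Take a single $U\sim\mathrm{Unif}[0,1]$ and partition $[0,\sum_j q_j)$ into consecutive disjoint intervals $A_j$ of length $q_j$; this is possible because $\sum_jq_j<1$. Define $X_j$ to be its upper atom on $\{U\in A_j\}$ and its lower atom otherwise. Each marginal is the law from Step 2, so the moment and tail conditions hold. The events $E_j^c=\{U\in A_j\}$ are mutually exclusive, so
\[
\Pr\Bigl(\bigcup_jE_j^c\Bigr)=\sum_jq_j=\sum_j\alpha_j-\eta.
\]

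\textbf{Step 4 (consequence).} Suppose $\varepsilon_d<\sum_j\alpha_j$. Choose $\eta\in\bigl(0,\sum_j\alpha_j-\varepsilon_d\bigr)$. Step 3 then yields an admissible joint law with $\Pr(E_d^c)>\varepsilon_d$, so no uniform guarantee of the form $\Pr(E_d^c)\le\varepsilon_d$ can hold.

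The only delicate point is the strict inequality $X_j>p_j$ on the upper atom. The exact Cantelli extremal law places its mass precisely at $p_j$, which does not belong to $E_j^c=\{X_j>p_j\}$. This is why the construction shrinks the tail mass to $q_j<\alpha_j$, pushes the atom past $p_j$, and accepts the resulting $\eta$ slack.
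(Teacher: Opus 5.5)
Your proof is correct and follows essentially the paper's route: Cantelli two-point marginals whose tail mass is pushed just below $\alpha_j$ so the upper atom lies strictly above $p_j$, coupled so that at most one surgery overruns at a time. Your uniform-$U$ partition is the paper's $(m+1)$-scenario coupling. The only difference is cosmetic: you fix $q_j=\alpha_j-\eta/m$ directly, whereas the paper shifts the atom by $\delta_j$ and invokes $q_j\uparrow\alpha_j$. Your choice makes the $\eta$ bookkeeping exact.
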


\proof{Proof.}
Fix $\eta>0$. For each $j$ pick $\delta_j>0$ and set
$h_j:=\mu_j+\kappa_j\sigma_j+\delta_j$, $\ell_j:=\mu_j-\sigma_j^2/(h_j-\mu_j)$, and
$q_j:=\sigma_j^2/[\sigma_j^2+(h_j-\mu_j)^2]$. The two-point law placing mass $q_j$
at $h_j$ and $1-q_j$ at $\ell_j$ has mean $\mu_j$, variance $\sigma_j^2$, and
$\Pr(X_j>p_j)=q_j$; since $h_j-\mu_j>\kappa_j\sigma_j$ we get $q_j<\alpha_j$, with
$q_j\uparrow\alpha_j$ as $\delta_j\downarrow0$, so the $\delta_j$ can be chosen
with $\sum_j q_j\ge\sum_j\alpha_j-\eta$. Couple these marginals through $m+1$
scenarios: $(X_1,\dots,X_m)=x^{(j)}$ with probability $q_j$, where
$x^{(j)}:=(\ell_1,\dots,\ell_{j-1},h_j,\ell_{j+1},\dots,\ell_m)$, and
$(\ell_1,\dots,\ell_m)$ with the remaining probability $1-\sum_j q_j\ge0$. Each
marginal is the two-point law above, and in every scenario at most one component
exceeds its buffer, so the $E_j^c$ are mutually exclusive and
$\Pr\bigl(\bigcup_{j=1}^m E_j^c\bigr)=\sum_j q_j\ge\sum_j\alpha_j-\eta$.
\Halmos
\endproof

\MV{
\section{\texorpdfstring{Max-Min Reliability and Equalization under Transferable Load}{Max-Min Reliability and Equalization under Transferable Load}}
\label{ec:m2-equalization}

This section clarifies what Model M2 implies, and does not imply, about the distribution of reliability across days.

\subsection{M2 as a max-min formulation and large-weight limit}

Let $\mathcal X$ be the feasible set of schedules, and let
$C(x):=w_{\rm idle}C_{\rm idle}(x)+w_{\rm room}C_{\rm room}(x)
+w_{\rm doc}C_{\rm doc}(x)+w_{\rm start}C_{\rm start}(x)$ denote operating
cost. For day log-budgets $b_d(x)$, write
$m(x):=\min_{d\in D}b_d(x)$. In M2, $b_{\min}\le b_d(x)$ for all $d$ and
$b_{\min}$ is rewarded in the objective. Hence $b_{\min}=m(x)$ at optimality.
Writing $w=w_\varepsilon$, M2 is equivalent to
\begin{equation}
    \label{eq:ec-m2-reduced}
    \min_{x\in\mathcal X}\; C(x)-w m(x).
\end{equation}
Because $e^{b_d(x)}$ is the day-level reliability lower bound and
$1-e^{b_d(x)}$ is the corresponding violation proxy, M2 pays for improving the
least reliable day.

Let $B^*:=\max_{x\in\mathcal X}m(x)$, equivalently
$B^*=\max_{x\in\mathcal X}\min_{d\in D}b_d(x)$, be the best achievable
worst-day log-budget. As $w$ grows, M2 drives the bottleneck day toward $B^*$.
Managerially, M2 is a max-min reliability policy: it protects the weakest day
first. This does not guarantee equal reliability across all days. Equalization
requires transferability, i.e., capacity or workload must be movable from slack
days to the bottleneck day.

\begin{proposition}[Large-weight limit of M2]
\label{prop:ec-m2-maxmin-limit}
Assume $\mathcal X$ is nonempty and that $C$ is bounded on $\mathcal X$:
there exist finite constants $C_{\min}$ and $C_{\max}$ such that
$C_{\min}\le C(x)\le C_{\max}$ for all $x\in\mathcal X$. Let $x^w$ be an
optimal solution of \eqref{eq:ec-m2-reduced}. Then $m(x^w)\to B^*$ as
$w\to\infty$.
If, in addition, $\mathcal X$ is compact and $C$ and $b_d$ are continuous, every
limit point of $\{x^w\}_{w\ge 0}$ belongs to
$\operatorname*{arg\,max}_{x\in\mathcal X} m(x)$.
\end{proposition}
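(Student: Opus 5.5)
The argument is a standard penalty-limit argument built on comparing $x^w$ with a near-maximizer of $m$. One subtlety comes first: $B^*$ is defined as a maximum, but in general it is only a supremum. Since $b_d(x)\le 0$, $m$ is bounded above by $0$, so $B^*:=\sup_{x\in\mathcal X}m(x)$ is finite. In the actual MILP, $\mathcal X$ has finitely many integer configurations, so the supremum is attained; in the general statement I will work with the supremum for the first claim. Trivially, $m(x^w)\le B^*$ for every $w$, so only the lower bound needs proof.

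\textbf{Step 1 (lower bound via optimality).} Fix $\eta>0$ and pick $\bar x\in\mathcal X$ with $m(\bar x)\ge B^*-\eta$. Optimality of $x^w$ in \eqref{eq:ec-m2-reduced} gives
\[
C(x^w)-w\,m(x^w)\le C(\bar x)-w\,m(\bar x).
\]
Rearranging and using the cost bounds yields
\[
m(x^w)\ge m(\bar x)-\frac{C(x^w)-C(\bar x)}{w}\ge B^*-\eta-\frac{C_{\max}-C_{\min}}{w}.
\]
Letting $w\to\infty$ and then $\eta\downarrow 0$ gives $\liminf_{w} m(x^w)\ge B^*$. Combined with $m(x^w)\le B^*$, this proves $m(x^w)\to B^*$. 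The argument implicitly assumes an optimal $x^w$ exists, which the statement takes as given.

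\textbf{Step 2 (limit points).} Under compactness and continuity, $m=\min_d b_d$ is a minimum of finitely many continuous functions, hence continuous. So $B^*$ is attained and $\operatorname*{arg\,max} m$ is nonempty. Take a sequence $w_n\to\infty$ with $x^{w_n}\to\hat x$; such $\hat x$ lies in $\mathcal X$ by closedness. Continuity gives $m(\hat x)=\lim_n m(x^{w_n})=B^*$ by Step 1, so $\hat x\in\operatorname*{arg\,max}_{\mathcal X} m$.

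A limit point along $w\to\infty$ is meant in this sequential sense. If the statement is read as including limit points of the family over bounded $w$, it would be false in general, so I will state explicitly that limits are taken as $w\to\infty$.

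\textbf{Main obstacle.} The main obstacle is bookkeeping rather than depth. First, attainment of $B^*$ is needed only for the second claim; for the first claim the near-maximizer $\bar x$ handles it. Second, the claim that $b_{\min}=m(x)$ at optimality, which underlies the reduction \eqref{eq:ec-m2-reduced}, requires $w>0$. For $w>0$, decreasing $b_{\min}$ below $m(x)$ strictly worsens the objective, so I will note this reduction at the start.
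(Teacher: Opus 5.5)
Your argument is correct and is essentially the paper's: compare $x^w$ with a (near-)maximizer of $m$ through the optimality inequality to obtain $B^*-m(x^w)\le \eta+(C_{\max}-C_{\min})/w$, then use continuity of $m$ along a convergent subsequence with $w_k\to\infty$. Your near-maximizer is a harmless refinement that removes the paper's implicit assumption that $B^*$ is attained. There is one sign slip: rearranging the optimality inequality gives $m(x^w)\ge m(\bar x)-\bigl(C(\bar x)-C(x^w)\bigr)/w$, not $m(\bar x)-\bigl(C(x^w)-C(\bar x)\bigr)/w$, but since $\bigl|C(\bar x)-C(x^w)\bigr|\le C_{\max}-C_{\min}$ your final bound is unaffected.
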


\proof{Proof.}
Let $x^*$ be any maximizer of $m(x)$ over $\mathcal X$, so that $m(x^*)=B^*$.
By optimality of $x^w$ in \eqref{eq:ec-m2-reduced},
$C(x^w)-w\,m(x^w)\le C(x^*)-w B^*$.
Rearranging gives
$0\le B^*-m(x^w)\le (C(x^*)-C(x^w))/w \le (C_{\max}-C_{\min})/w$.
The right-hand side converges to zero, proving the first claim. If
$\mathcal X$ is compact, any subsequence has a convergent subsubsequence. If
$x^{w_k}\to \bar x$, continuity gives
$m(\bar x)=\lim_k m(x^{w_k})=B^*$, so
$\bar x\in\operatorname*{arg\,max}_x m(x)$.
\Halmos
\endproof

Proposition~\ref{prop:ec-m2-maxmin-limit} is the strongest conclusion that
follows from the M2 objective alone. It implies convergence of the worst day to
its best achievable reliability level, but it does not imply
$b_d(x^w)\to B^*$ for every day $d$. Some days may remain strictly above the
worst-day level. Below we provide a simple illustration of this phenomenon.
}

\MV{
\subsection{A Counterexample: M2 Need Not Equalize Reliability}
\label{ec:m2-counterexample-compact}

We give a minimal instance in which M2 does not equalize day-level reliability. Consider two days, one room and one surgeon per day, and one surgery per day, with overtime permitted up to the room cap $\bar O^{\mathrm{rm}}$. Buffered durations are $p_d(\alpha_d)=\mu_d+\sigma_d\kappa(\alpha_d)$ with $\kappa(\alpha)=\sqrt{(1-\alpha)/\alpha}$, and the menu is $\Acal=\{0.01,0.05,0.10\}$. The operating cost is the idle-plus-overtime cost $C_{\mathrm{op}}$ of~\eqref{eq:cop}; with a single room and surgeon per day, room and surgeon overtime coincide, so we write the idle time as $I_d=(H_d-p_d)^+$ and the overtime as $o_d=(p_d-H_d)^+$ and take unit coefficients $c^{\mathrm{idle}}=c^{\mathrm{rm}}=1$, $c^{\mathrm{sg}}=0$, giving $C_{\mathrm{op}}=\sum_d (I_d+o_d)$. Take $H_d=480$, an overtime cap $\bar O^{\mathrm{rm}}=60$, a high-variance bottleneck day $(\mu_1,\sigma_1)=(330,50)$, and a low-variance slack day $(\mu_2,\sigma_2)=(436.41,10)$.

Day~1 is capacity-bound: even $\alpha_1=0.05$ would require $67.9>\bar O^{\mathrm{rm}}$ minutes of overtime, so the only feasible choice is $\alpha_1=0.10$, with $p_1=480=H_d$ and $R_1=0.90$. The worst day is therefore pinned at $R_1=0.90$ and cannot be improved. Day~2 has slack: $\alpha_2=0.05$ fills the day exactly ($p_2=480$, no idle and no overtime, $R_2=0.95$), while $\alpha_2=0.01$ would raise it to $R_2=0.99$ at the price of $55.9$ minutes of overtime. Since day~1 is always the worst day, the worst-case reward $w_\varepsilon b_{\min}=w_\varepsilon\log 0.90$ is constant regardless of the day~2 choice, so M2 simply minimizes cost on the slack day and leaves it at its cheapest level $\alpha_2=0.05$. For every $w_\varepsilon$, M2 selects $(R_1,R_2)=(0.90,0.95)$: the optimum is not equalized (Figure~\ref{ec-fig:m2-counterex-m1m2}, left panel). Note that equalizing is in fact feasible here---setting $\alpha_2=0.10$ would give $(R_1,R_2)=(0.90,0.90)$---but it yields the same worst-day reliability ($b_{\min}=\log 0.90$, pinned by day~1) while wasting $13.59$ idle minutes, so M2 declines it. M2 is thus genuinely max-min: indifferent among schedules with equal worst-day reliability and breaking ties on cost, rather than equality-seeking. This is the failure mode the section is about---when the bottleneck day is capacity-bound and reliability cannot be shifted onto it, M2 protects that day as far as feasibility permits but cannot raise it to the level of the slack day, so reliabilities remain unequal. Equalization re-emerges precisely when capacity is transferable across days, the regime in which the multi-room, multi-surgery instances of the main paper operate and in which M2 is empirically observed to equalize.

It is instructive to contrast the average posture M1 on the same instance. Unlike M2, M1 is rewarded for \emph{any} reliability gain, including on a day that is not the bottleneck. Day~1 remains pinned at $R_1=0.90$, but M1 now pays the overtime to lift the already-safe day~2 from $0.95$ to $0.99$ as soon as $w_\varepsilon>2\times 55.9/(\log 0.99-\log
0.95)\approx 2{,}711$ (the factor $2=|\mathcal D|$ reflects the
$1/|\mathcal D|$ normalization in $\bar b$), choosing $(R_1,R_2)=(0.90,0.99)$ (Figure~\ref{ec-fig:m2-counterex-m1m2}, right panel). Neither posture equalizes, since day~1 is capacity-bound below both; but they differ in \emph{where} reliability is bought. M2 concentrates effort on the bottleneck and spends nothing on the already-protected day, keeping the two days as close as feasibility permits; M1 spreads protection onto the strong day, widening the reliability gap and raising cost. This separation is consistent with the main paper: among the postures, M2 is the one that leans toward equalization, declining to buy reliability where it is not scarce.

\begin{figure}[t]
\centering
\resizebox{0.95\linewidth}{!}{%
\begin{tikzpicture}[x=0.018cm,y=0.62cm, font=\scriptsize]
\tikzset{used/.style={draw=black,fill=blue!25},
         idle/.style={draw=black,fill=gray!18,pattern=north east lines},
         ot/.style={draw=red!70!black,fill=red!35},
         worst/.style={draw=red!70!black,line width=0.8pt}}
\node[font=\bfseries] at (240,1.20) {M2 (worst-case), large $w_\varepsilon$};
\node[anchor=east] at (-15,0.25) {Day 1};
\draw[used,worst] (0,0) rectangle (480,0.5);
\node at (240,0.25) {$\alpha_1=0.10,\ R_1=0.90$};
\node[anchor=east] at (-15,-0.65) {Day 2};
\draw[used] (0,-0.9) rectangle (480,-0.4);
\node at (240,-0.65) {$\alpha_2=0.05,\ R_2=0.95$};
\draw[dashed] (480,-1.05) -- (480,0.80);
\node[font=\tiny] at (480,0.97) {$H_d$};
\node at (240,-1.60) {$(R_1,R_2)=(0.90,0.95),\quad C_{\mathrm{op}}=0$};
\def\xb{640}
\node[font=\bfseries] at (\xb+240,1.20) {M1 (average), large $w_\varepsilon$};
\draw[used,worst] (\xb+0,0) rectangle (\xb+480,0.5);
\node at (\xb+240,0.25) {$\alpha_1=0.10,\ R_1=0.90$};
\draw[used] (\xb+0,-0.9) rectangle (\xb+480,-0.4);
\draw[ot] (\xb+480,-0.9) rectangle (\xb+535.91,-0.4);
\node at (\xb+240,-0.65) {$\alpha_2=0.01,\ R_2=0.99$};
\draw[dashed] (\xb+480,-1.05) -- (\xb+480,0.80);
\node[font=\tiny] at (\xb+480,0.97) {$H_d$};
\draw[red!70!black,dotted] (\xb+540,-1.05) -- (\xb+540,0.80);
\node[font=\tiny] at (\xb+540,0.97) {$\bar O^{\mathrm{rm}}$};
\node[font=\tiny] at (\xb+508,-1.18) {$o_2\approx 55.9$};
\node at (\xb+240,-1.60) {$(R_1,R_2)=(0.90,0.99),\quad C_{\mathrm{op}}\approx 55.9$};
\end{tikzpicture}%
}
\caption[Worst-case versus average posture on the same instance.]{\MV{Worst-case versus average posture on the same instance. Day~1 is capacity-bound at $R_1=0.90$ under both. M2 leaves the slack day~2 at its cheapest feasible level $R_2=0.95$ (it earns no reward for protecting a non-bottleneck day), whereas M1 spends overtime (red) to raise day~2 to $R_2=0.99$. Neither equalizes; the postures differ in \emph{where} reliability is bought---M2 concentrates on the bottleneck, M1 spreads onto the already-safe day.}}
\label{ec-fig:m2-counterex-m1m2}
\end{figure}
}

\section{Mechanism Diagnostics for the $\alpha$-Menu}
\label{ec:mechanism}

This section provides supporting mechanism diagnostics for the M1--M3 risk-posture comparison reported in Section~\ref{sec:exp-risk-postures} of the main paper. The headline quantitative results are summarized in Tables~\ref{tab:m1m2m3-summary} and~\ref{tab:multi-instance-outcomes}; here we present two complementary visualizations that illuminate how the optimizer translates the day-level reliability budget into per-surgery decisions. \MV{We then add a fixed-schedule analytical relaxation that explains the empirical menu patterns: monotone assignment by variance, depolarization under high reliability prices, and the capacity value of case-specific buffers.}

\subsection{Depolarization of the $\alpha$-Menu under Increasing Reliability Penalty}
\label{ec:depolarization}

As the reliability penalty grows, the solver depolarizes: extreme $\alpha$ choices vanish and all surgeries converge to a moderate band. The reliability differences between M1 and M2 arise from how each model allocates per-surgery risk budgets. Figure~\ref{ec-fig:alpha-stacked} displays the $\alpha$-menu distribution across all seven runs (M1$\times 3w_\varepsilon$ + M2$\times 3w_\varepsilon$ + M3). At low $w_\varepsilon$, the solver makes polarized choices: it assigns many surgeries the most aggressive levels ($\alpha = 0.075$--$0.10$, short buffers) while a few receive conservative levels ($\alpha \le 0.01$, long buffers) --- 46\% of M1's and 56\% of M2's choices fall at these extremes. As $w_\varepsilon$ increases, the distribution does not simply shift toward conservative values; rather, it depolarizes. At $w_\varepsilon = 1{,}000$, the extreme share drops to 20\% (M1) and 48\% (M2), and at $w_\varepsilon = 100{,}000$ it vanishes entirely: every surgery is assigned a moderate $\alpha \in \{0.02, 0.03, 0.04\}$. The solver finds that a uniformly moderate risk allocation --- neither overly aggressive nor overly conservative on any individual surgery --- yields the best trade-off between operational cost and aggregate reliability. M3 exhibits a similar pattern at 80\% moderate, with a small tail at $\alpha = 0.075$ reflecting the freedom that its hard floor constraint leaves the optimizer.

\subsection{Surgery-Size Differentiation within the Moderate Band}
\label{ec:size-correlation}

The moderate band is not uniform: the solver differentiates by surgery characteristics. Figure~\ref{ec-fig:alpha-vs-duration} plots the per-surgery $\alpha$ against expected duration $\mu_j$ for M2 at $w_\varepsilon = 100{,}000$. Longer surgeries receive systematically higher $\alpha$ (Pearson $r = 0.58$): small procedures average $\alpha = 0.025$, medium $\alpha = 0.032$, and large $\alpha = 0.039$. \MV{This is operationally intuitive: because $b_j=\sigma_j\kappa(\alpha_j)$, a higher-variance surgery can receive more absolute buffer time even when it is assigned a larger $\alpha_j$. The $\alpha$-menu enables this surgery-specific fine-tuning; a fixed-$\alpha$ policy imposes the same standardized buffer multiple $\kappa(\alpha)$ on every procedure, so buffer minutes scale linearly with $\sigma_j$, forgoing the efficiency gains from adapting to surgery heterogeneity.} The degree of $\alpha$-differentiation scales with the diversity of the surgical caseload: in settings with greater procedure heterogeneity, the full menu would be exploited.

\begin{figure}[ht]
\centering
\begin{subfigure}[t]{0.49\textwidth}
  \centering
  \includegraphics[width=\textwidth]{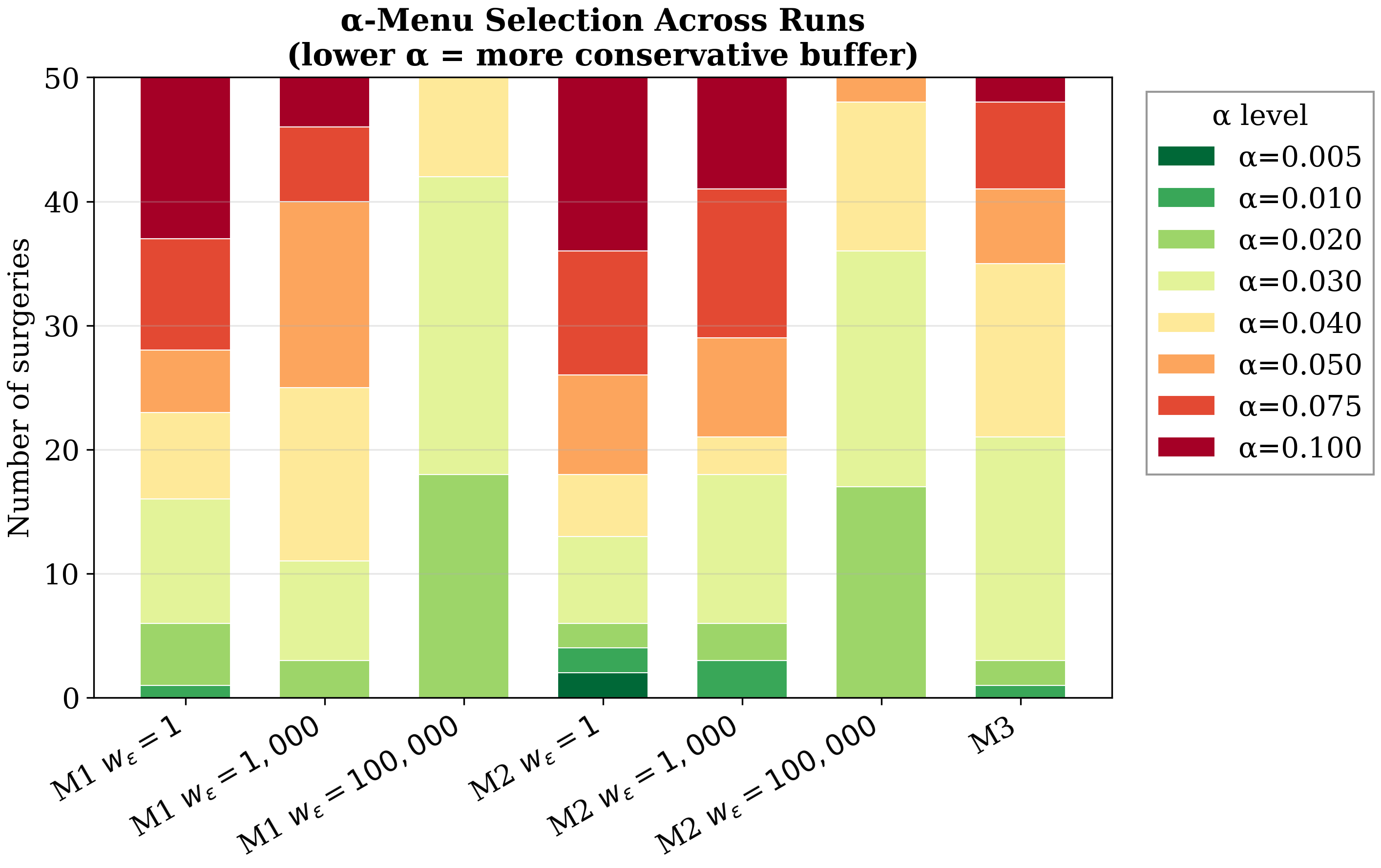}
  \caption{$\alpha$-menu selection across all runs. Lower $\alpha$ (green): longer buffers; higher $\alpha$ (red): shorter buffers.}
  \label{ec-fig:alpha-stacked}
\end{subfigure}\hfill
\begin{subfigure}[t]{0.49\textwidth}
  \centering
  \includegraphics[width=\textwidth]{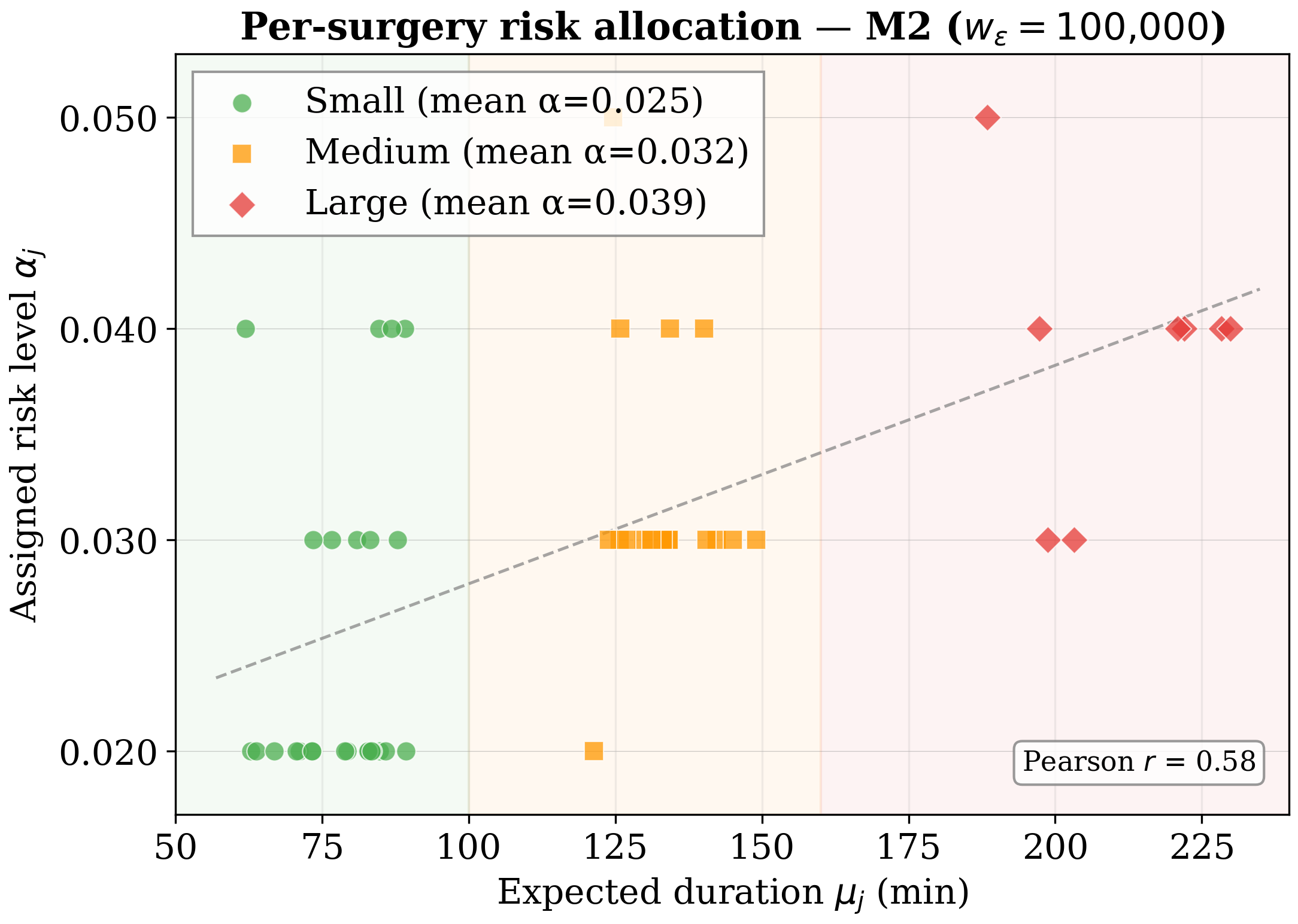}
  \caption{Per-surgery $\alpha$ vs.\ expected duration $\mu_j$ at $w_\varepsilon = 100{,}000$ (M2). Shape and color indicate surgery type. Pearson $r = 0.58$.}
  \label{ec-fig:alpha-vs-duration}
\end{subfigure}
\end{figure}

\MV{
\subsection{Analytical Explanation: Structure and Value of the $\alpha$-Menu}
\label{ec:menu-value-condensed}

The preceding diagnostics show how the optimizer uses the $\alpha$-menu; the
fixed-schedule relaxation below explains why those patterns are not accidental.
Fixing the day composition, room assignment, and within-day order and varying
only the per-surgery reliability levels, we obtain stylized relaxations of the
full MILP that isolate the reliability-allocation mechanism and explain why the
model sorts reliability by variance, why the menu depolarizes at high
reliability prices, the $\sigma^{2/3}$ buffer-scaling rule, and when it improves
on scalar buffer rules. For surgery $j$ the Cantelli engine links buffer minutes
and tail probability by $\alpha_j=\sigma_j^2/(\sigma_j^2+b_j^2)$, equivalently
$b_j=\kappa(\alpha_j)\sigma_j$ with $\kappa(\alpha)=\sqrt{(1-\alpha)/\alpha}$, so
a lower $\alpha_j$ buys stronger protection and more buffer minutes; the key
asymmetry is that reliability consumption depends on $\alpha_j$ while capacity
consumption scales with $\sigma_j\kappa(\alpha_j)$. The fixed-order assumption is
a deliberate simplification---the full MILP also chooses rooms, surgeons,
sequencing, overtime, and day assignments---isolating the reliability-allocation
mechanism conditional on those decisions.

\subsubsection{Monotone menu assignment}

Consider one day with surgeries $j=1,\ldots,m$ and a finite menu
$0<\alpha^1<\cdots<\alpha^T<1$. The day log-budget depends on the selected
multiset through $\sum_j\log(1-\alpha_j)$, whereas total buffered workload is
$\sum_j\sigma_j\kappa(\alpha_j)$.

The cost monotonicity used below is a \emph{tight-load} condition, not a global
property of the main-paper objective, which charges both idle time and overtime.
In a one-resource abstraction with horizon $H$, buffered load $L$, and overtime
$o$, the minimum of $c^{\rm idle}(H+o-L)+c^{\rm ot}o$ is $c^{\rm idle}(H-L)$ when
$L\le H$ and $c^{\rm ot}(L-H)$ when $L\ge H$: an extra buffered minute lowers
measured idle on a slack room but raises overtime or consumes scarce slack once
the resource is tight. The proposition therefore applies where capacity has a
positive shadow price; where a day has abundant idle capacity, the sorting
result becomes secondary.

\begin{proposition}[Monotone reliability assignment]
\label{prop:ec-menu-monotone}
\MV{Suppose all reliability constraints are invariant to permutations of the
selected menu levels, and all remaining $\alpha$-dependent feasibility
constraints depend on the assignment only through total buffered workload and
are preserved when that workload decreases. Assume the comparison is in a
tight-load regime in which operational cost is weakly increasing in total
buffered workload.} Then there
exists an optimal assignment with
$\sigma_i<\sigma_j\Rightarrow \alpha_i^*\le\alpha_j^*$. If the
operational cost is strictly increasing in total buffered workload, no optimal
solution contains an inversion with $\sigma_i<\sigma_j$ and
$\alpha_i^*>\alpha_j^*$.
\end{proposition}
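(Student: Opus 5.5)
The plan is an exchange (swap) argument. Take any optimal assignment $(\alpha_1^*,\dots,\alpha_m^*)$ and suppose it has an inversion: a pair $i,j$ with $\sigma_i<\sigma_j$ and $\alpha_i^*>\alpha_j^*$. Form the swapped assignment $\tilde\alpha$, with $\tilde\alpha_i=\alpha_j^*$, $\tilde\alpha_j=\alpha_i^*$, and all other levels unchanged.

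The first step is reliability feasibility. The swap permutes the selected menu levels, so the multiset of levels is the same. Hence $\sum_j\log(1-\alpha_j)$ is unchanged, and by the permutation-invariance hypothesis every reliability constraint (the log-budget, the M2 floor $b_{\min}\le b_d$, the M3 target) still holds, with the same value of any reliability reward $\Psi$.

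The second step is the workload comparison. Since $\kappa(\alpha)=\sqrt{(1-\alpha)/\alpha}$ is strictly decreasing, $\alpha_i^*>\alpha_j^*$ gives $\kappa(\alpha_i^*)<\kappa(\alpha_j^*)$. The change in total buffered workload is
\[
\Delta=\sigma_i\kappa(\alpha_j^*)+\sigma_j\kappa(\alpha_i^*)-\sigma_i\kappa(\alpha_i^*)-\sigma_j\kappa(\alpha_j^*)=-(\sigma_j-\sigma_i)\bigl(\kappa(\alpha_j^*)-\kappa(\alpha_i^*)\bigr)<0.
\]
This is the rearrangement inequality: the larger multiplier should sit on the smaller $\sigma$. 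Because workload strictly decreases, the remaining $\alpha$-dependent feasibility constraints are preserved by hypothesis. In the tight-load regime, operational cost is weakly increasing in workload, so the swapped assignment is feasible with objective no worse, and therefore also optimal.

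The third step shows the process ends. Each swap strictly reduces the number of inversions, measured as pairs with $\sigma_i<\sigma_j$ and $\alpha_i>\alpha_j$. To justify this cleanly, I would use the standard observation that swapping the values at an inverted pair reduces the inversion count of a sequence ordered by $\sigma$. Ties in $\sigma$ need a fixed tie-breaking order of indices and are never counted as inversions. A cleaner alternative is to note that total workload strictly decreases at each swap and only finitely many assignments exist, so no cycle can occur. Either way, after finitely many swaps we reach an optimal assignment with no inversions, which is the first claim.

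For the strict claim, suppose operational cost is strictly increasing in workload. Then any optimal solution containing an inversion would be strictly improved by the swap, because $\Delta<0$, which contradicts optimality.

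The main obstacle is bookkeeping rather than mathematics. The hypotheses must be stated so that buffered durations enter feasibility and cost only through total workload. In the full MILP, durations also affect start times and overlap, so the proof should state that it works in the fixed-schedule abstraction described before the proposition. That is where $\delta_j=\mu_j+\sigma_j\kappa(\alpha_j)$ and the $\mu_j$ terms cancel under the swap.
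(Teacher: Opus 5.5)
Your proposal is correct and follows the paper's proof essentially step for step. Both swap the levels at an inverted pair and use permutation invariance to leave every reliability term unchanged. Both then use $\Delta=-(\sigma_j-\sigma_i)\{\kappa(\alpha_j^*)-\kappa(\alpha_i^*)\}<0$ to show the workload, and hence the cost, does not increase, and repeat the swap until no inversions remain. Your termination argument (strictly decreasing workload over finitely many menu assignments) is a slightly more careful version of the paper's ``repeating adjacent swaps.''
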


\proof{Proof.}
The function $\kappa(\alpha)=((1-\alpha)/\alpha)^{1/2}$ is strictly decreasing
on $(0,1)$. Suppose $\sigma_i<\sigma_j$ but $\alpha_i>\alpha_j$. Swapping the two
menu levels leaves the day log-budget and all permutation-invariant reliability
constraints unchanged. The workload change from the inverted assignment to the
swapped assignment is
$\Delta=\sigma_i\kappa(\alpha_j)+\sigma_j\kappa(\alpha_i)
-\sigma_i\kappa(\alpha_i)-\sigma_j\kappa(\alpha_j)
=-(\sigma_j-\sigma_i)\{\kappa(\alpha_j)-\kappa(\alpha_i)\}<0$.
Thus the swap weakly improves the objective and, under strict monotonicity,
strictly improves it. Repeating adjacent swaps removes all inversions.
\Halmos
\endproof

\begin{remark}[Managerial reading]
The implication is operational rather than cosmetic: the menu buys probability
protection where it consumes the fewest minutes. High-variance cases may still
receive more absolute buffer time, but they optimally receive weaker tail
probabilities than comparable low-variance cases.
\end{remark}

\subsubsection{Equi-marginal allocation}

The monotone-assignment proposition above is a finite-menu interchange result.
The following stylized continuous relaxation is its positive-shadow-price
counterpart: buffer minutes are treated as scarce capacity, so the problem
minimizes total buffered workload while meeting a reliability budget.
Let $g(\alpha):=-\log(1-\alpha)$ and consider a fixed day with budget
$B>0$:
\begin{equation}
\label{eq:ec-menu-cont}
    \min_{\alpha\in(0,\bar\alpha]^m}
    \sum_{j=1}^m \sigma_j\kappa(\alpha_j)
    \quad\text{s.t.}\quad
    \sum_{j=1}^m g(\alpha_j)\le B .
\end{equation}
We call a solution \emph{interior} if $0<\alpha_j^*<\bar\alpha$ for every
$j$; at such a point the variable-bound multipliers are zero.

\begin{lemma}[Convexity of the continuous relaxation]
\label{lem:ec-menu-convexity}
The function $\kappa$ is strictly convex on $(0,3/4)$, and $g$ is convex and
increasing on $(0,1)$. Hence the continuous relaxation above is a convex
program for any $\bar\alpha\le 3/4$.
\end{lemma}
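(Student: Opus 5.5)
The plan is to verify each piece by direct differentiation and then invoke the standard fact that minimizing a convex objective over a convex feasible set is a convex program.

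\emph{Convexity of $\kappa$.} I would write $\kappa(\alpha)=(1-\alpha)^{1/2}\alpha^{-1/2}=(\alpha^{-1}-1)^{1/2}$ and compute the first two derivatives. With $h(\alpha):=\alpha^{-1}-1$ we have $h'=-\alpha^{-2}$ and $h''=2\alpha^{-3}$. The chain rule gives
\[
\kappa''=\tfrac12 h^{-1/2}h''-\tfrac14 h^{-3/2}(h')^2=\tfrac14 h^{-3/2}\bigl(2hh''-(h')^2\bigr).
\]
Substituting,
\[
2hh''-(h')^2=4\alpha^{-3}(\alpha^{-1}-1)-\alpha^{-4}=\alpha^{-4}\bigl(4(1-\alpha)-1\bigr)=\alpha^{-4}(3-4\alpha).
\]
This is strictly positive exactly when $\alpha<3/4$. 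Since $h>0$ on $(0,1)$, we get $\kappa''>0$ on $(0,3/4)$, which is strict convexity there. The same formula also shows where the threshold $3/4$ comes from: at $\alpha=3/4$ the second derivative vanishes, and beyond it $\kappa$ is concave. This algebraic step is the only one that needs care, and I expect it to be the main obstacle only in the sense of getting the sign bookkeeping right.

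\emph{Properties of $g$.} For $g(\alpha)=-\log(1-\alpha)$ the derivatives are $g'(\alpha)=1/(1-\alpha)>0$ and $g''(\alpha)=1/(1-\alpha)^2>0$ on $(0,1)$. So $g$ is increasing and (strictly) convex there.

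\emph{Convexity of the program \eqref{eq:ec-menu-cont}.} Take $\bar\alpha\le 3/4$, so that $(0,\bar\alpha]^m$ is a convex box lying inside the region where $\kappa$ is convex. The objective $\sum_j\sigma_j\kappa(\alpha_j)$ is a nonnegative combination ($\sigma_j>0$) of convex functions of separate coordinates, hence convex on the box; it is in fact strictly convex. The constraint function $\sum_j g(\alpha_j)$ is convex, so its sublevel set $\{\sum_j g(\alpha_j)\le B\}$ is convex. Intersecting this sublevel set with the box gives a convex feasible set. A convex objective over a convex feasible set is a convex program, which proves the claim.

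One remark on the endpoint $\bar\alpha=3/4$: there $\kappa''$ vanishes only at the single boundary point. Because $\kappa$ is continuous, convexity on $(0,3/4)$ extends to $(0,3/4]$, so nothing changes.
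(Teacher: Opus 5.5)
Your proof is correct and follows essentially the same route as the paper. Both arguments reduce strict convexity of $\kappa$ to the sign of the factor $3-4\alpha$: the paper shows that $\kappa'=-\tfrac12\alpha^{-3/2}(1-\alpha)^{-1/2}$ is strictly increasing, while you compute $\kappa''$ directly from $\kappa=(\alpha^{-1}-1)^{1/2}$. Both then check $g'>0$ and $g''>0$, and conclude that the program minimizes a nonnegatively weighted separable convex objective over a box intersected with a convex sublevel set; your continuity remark covering the endpoint $\bar\alpha=3/4$ is a small piece of care that the paper's proof omits.
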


\proof{Proof.}
The derivatives are
$\kappa'(\alpha)=-1/\{2\alpha^{3/2}(1-\alpha)^{1/2}\}$,
$g'(\alpha)=1/(1-\alpha)$, and $g''(\alpha)=1/(1-\alpha)^2>0$.
For $h(\alpha)=\alpha^{-3/2}(1-\alpha)^{-1/2}$,
$h'(\alpha)=\alpha^{-5/2}(1-\alpha)^{-3/2}(2\alpha-3/2)<0$
on $(0,3/4)$. Since $\kappa'=-h/2$, $\kappa'$ is strictly increasing on this
interval, so $\kappa$ is strictly convex there. The claim follows because the
objective is a nonnegative weighted sum of convex terms and the feasible set is
a sublevel set of the convex function $\sum_j g(\alpha_j)$.
\Halmos
\endproof

\begin{proposition}[Equi-marginal rule and conservative limit]
\label{prop:ec-menu-equi}
\MV{Assume $\sigma_j>0$ for all $j$; the statements below describe interior
optima, with active menu bounds giving the corresponding truncated KKT
allocation.} At any interior optimum of the continuous relaxation with a binding reliability
budget and multiplier $\lambda>0$,
$\sigma_j\varphi(\alpha_j^*)=\lambda$, where
$\varphi(\alpha):=(1-\alpha)^{1/2}/(2\alpha^{3/2})$.
Consequently:
\begin{enumerate}[label=(\roman*),leftmargin=2em]
\item $\alpha_j^*=\varphi^{-1}(\lambda/\sigma_j)$ is strictly increasing
in $\sigma_j$;
\item as the budget tightens ($B\downarrow$), $\lambda$ increases and every
$\alpha_j^*$ decreases;
\item in the conservative regime $\lambda\to\infty$,
\begin{equation}
\label{eq:ec-menu-two-thirds}
\begin{aligned}
    \alpha_j^* &=\left(\frac{\sigma_j}{2\lambda}\right)^{2/3}(1+o(1)),\\
    b_j^* &=(2\lambda)^{1/3}\sigma_j^{2/3}(1+o(1)).
\end{aligned}
\end{equation}
Moreover
$\max_j\alpha_j^*-\min_j\alpha_j^*=O(\lambda^{-2/3})$, while
$\alpha_i^*/\alpha_j^*\to(\sigma_i/\sigma_j)^{2/3}$.
\end{enumerate}
\end{proposition}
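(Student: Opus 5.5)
The plan is to read everything off the KKT conditions of the continuous relaxation \eqref{eq:ec-menu-cont}. By Lemma~\ref{lem:ec-menu-convexity} this relaxation is convex, and Slater's condition holds: take all $\alpha_j$ small, so that $\sum_j g(\alpha_j)<B$. Hence KKT is necessary and sufficient. At an interior optimum the bound multipliers vanish, and stationarity gives
\[
\sigma_j\kappa'(\alpha_j^*)+\lambda g'(\alpha_j^*)=0 .
\]
Substituting $\kappa'(\alpha)=-1/\{2\alpha^{3/2}(1-\alpha)^{1/2}\}$ and $g'(\alpha)=1/(1-\alpha)$, I then multiply through by $(1-\alpha)$. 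This yields $\sigma_j(1-\alpha_j^*)^{1/2}/(2\alpha_j^{*3/2})=\lambda$, which is exactly $\sigma_j\varphi(\alpha_j^*)=\lambda$.

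For (i), I will show that $\varphi$ is strictly decreasing on $(0,1)$. Its numerator decreases and its denominator increases, and $\varphi$ runs from $+\infty$ down to $0$, so it is a bijection. Then $\alpha_j^*=\varphi^{-1}(\lambda/\sigma_j)$, and $\varphi^{-1}$ is decreasing. A larger $\sigma_j$ means a smaller argument $\lambda/\sigma_j$, hence a larger $\alpha_j^*$.

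For (ii), I use the binding budget. Define $S(\lambda):=\sum_j g(\varphi^{-1}(\lambda/\sigma_j))$. This map is strictly decreasing in $\lambda$, since every $\alpha_j$ decreases in $\lambda$ and $g$ is increasing. The condition $S(\lambda)=B$ therefore pins down $\lambda$ uniquely. Lowering $B$ forces $\lambda$ up, and each $\alpha_j^*=\varphi^{-1}(\lambda/\sigma_j)$ then falls. This is a monotone-inverse argument and needs no implicit-function machinery.

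For (iii), I work with the asymptotics of $\varphi$ as $\lambda\to\infty$. In this limit $\lambda/\sigma_j\to\infty$, so $\alpha_j^*\to0$. Near zero, $\varphi(\alpha)=\tfrac12\alpha^{-3/2}(1+O(\alpha))$. Solving $\tfrac12\alpha^{-3/2}(1+O(\alpha))=\lambda/\sigma_j$ gives $\alpha_j^*=(\sigma_j/(2\lambda))^{2/3}(1+O(\lambda^{-2/3}))$. For the buffer, $b_j^*=\sigma_j\kappa(\alpha_j^*)=\sigma_j\alpha_j^{*-1/2}(1+O(\alpha_j^*))$. Substituting the expansion for $\alpha_j^*$ gives
\[
b_j^*=\sigma_j(2\lambda/\sigma_j)^{1/3}(1+o(1))=(2\lambda)^{1/3}\sigma_j^{2/3}(1+o(1)).
\]
The spread statement follows because every $\alpha_j^*=\Theta(\lambda^{-2/3})$ with constants depending only on the fixed $\sigma$'s. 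Hence $\max_j\alpha_j^*-\min_j\alpha_j^*\le\max_j\alpha_j^*=O(\lambda^{-2/3})$. The ratio statement follows directly from the leading-order terms: $\alpha_i^*/\alpha_j^*\to(\sigma_i/\sigma_j)^{2/3}$.

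The only delicate step is making the inversion $\alpha=\varphi^{-1}(y)$ rigorous as $y\to\infty$. I would handle it by writing $\varphi(\alpha)^{-2/3}=2^{2/3}\alpha(1-\alpha)^{-1/3}$. This function is smooth and increasing near $0$ with derivative $2^{2/3}$ at $0$. Its inverse is therefore smooth with $\alpha=2^{-2/3}u(1+O(u))$, where $u=y^{-2/3}$. Plugging in $u=(\lambda/\sigma_j)^{-2/3}$ yields the stated expansion cleanly.
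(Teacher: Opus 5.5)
Your proof is correct and follows the paper's route essentially step for step: KKT stationarity of the convex relaxation gives $\sigma_j\varphi(\alpha_j^*)=\lambda$. Strict monotonicity of $\varphi$ and of the binding-budget map $\lambda\mapsto\sum_j g(\varphi^{-1}(\lambda/\sigma_j))$ then gives (i)--(ii), and the small-$\alpha$ expansions $\varphi(\alpha)=\tfrac12\alpha^{-3/2}(1+O(\alpha))$ and $\kappa(\alpha)=\alpha^{-1/2}(1+O(\alpha))$ give (iii). Your inversion through $\varphi^{-2/3}$ with an explicit $O(\lambda^{-2/3})$ error is a slightly sharper version of the same step. One small point: the convexity lemma you cite requires $\bar\alpha\le 3/4$, although at an interior optimum stationarity already follows from the constraint qualification $\nabla\sum_j g(\alpha_j)\neq 0$.
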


\proof{Proof.}
By Lemma~\ref{lem:ec-menu-convexity}, the KKT conditions are necessary and
sufficient when $\bar\alpha\le3/4$. The Lagrangian is
$L(\alpha,\lambda)=\sum_{j=1}^m \sigma_j\kappa(\alpha_j)
+\lambda\bigl(\sum_{j=1}^m g(\alpha_j)-B\bigr)$.
Interior stationarity gives
$0=\sigma_j\kappa'(\alpha_j^*)+\lambda g'(\alpha_j^*)$.
Substituting $\kappa'$ and $g'$ yields
$\lambda=\sigma_j(1-\alpha_j^*)^{1/2}/\{2(\alpha_j^*)^{3/2}\}$,
which is the stated first-order rule,
$\sigma_j\varphi(\alpha_j^*)=\lambda$. Since
$d\log\varphi(\alpha)/d\alpha=-1/\{2(1-\alpha)\}-3/(2\alpha)<0$,
$\varphi^{-1}$ is decreasing, and
$\alpha_j^*=\varphi^{-1}(\lambda/\sigma_j)$ is increasing in $\sigma_j$.

The budget binds: otherwise, increasing some $\alpha_j$ would preserve
feasibility and reduce $\sigma_j\kappa(\alpha_j)$. Define
$H(\lambda):=\sum_{j=1}^m g\{\varphi^{-1}(\lambda/\sigma_j)\}$.
The binding condition is $H(\lambda)=B$. Because $\varphi^{-1}$ is decreasing
and $g$ is increasing, $H$ is strictly decreasing; hence tightening $B$ raises
$\lambda$ and decreases each $\alpha_j^*$.

For the conservative limit, $B\downarrow0$ is equivalent to
$\lambda\to\infty$, so $\alpha_j^*\downarrow0$. Since
$\varphi(\alpha)=\frac12\alpha^{-3/2}(1+O(\alpha))$ as $\alpha\downarrow0$,
the first-order rule gives
$(\alpha_j^*)^{3/2}=\sigma_j(1+O(\alpha_j^*))/(2\lambda)$ and
therefore
$\alpha_j^*=(\sigma_j/(2\lambda))^{2/3}(1+o(1))$. This implies
$\max_j\alpha_j^*-\min_j\alpha_j^*=O(\lambda^{-2/3})$ and
$\alpha_i^*/\alpha_j^*\to(\sigma_i/\sigma_j)^{2/3}$. Finally,
$\kappa(\alpha)=\alpha^{-1/2}(1+O(\alpha))$, so
$b_j^*=\sigma_j\kappa(\alpha_j^*)
=(2\lambda)^{1/3}\sigma_j^{2/3}(1+o(1))$.
\Halmos
\endproof

\begin{remark}[Managerial reading]
High-variance surgeries receive more minutes, but sublinearly
($b_j^*\propto\sigma_j^{2/3}$): a slope in $\sigma$ between uniform-minutes
padding (zero) and a fixed-$\alpha$ rule (one).
\end{remark}

\subsubsection{Risk posture as a day-level price}

The preceding fixed-budget relaxation is most directly related to M3, because
it treats day reliability as a binding target. The same within-day allocation
logic, however, is not tied to that risk posture. For a fixed multi-day
schedule, write
$W_d(\alpha):=\sum_{j\in\mathcal J_d}\sigma_j\kappa(\alpha_j)$ and
$G_d(\alpha):=\sum_{j\in\mathcal J_d}-\log(1-\alpha_j)$. Let
$\mathcal A:=\prod_{d\in D}\prod_{j\in\mathcal J_d}
[\underline\alpha_j,\bar\alpha_j]$ be the continuous analogue of the finite
menu, with $0<\underline\alpha_j<\bar\alpha_j<1$; surgery $j$ is interior if
$\underline\alpha_j<\alpha_j^*<\bar\alpha_j$. Writing
$F(\alpha):=\sum_d W_d(\alpha)$, the fixed-schedule relaxations are
\[
\begin{aligned}
\text{M1:}\quad&
\min_{\alpha\in\mathcal A}
F(\alpha)+\frac{w_\varepsilon}{|D|}
\sum_d G_d(\alpha),\\
\text{M2:}\quad&
\min_{\alpha\in\mathcal A,\ r}
F(\alpha)+w_\varepsilon r,\\
&G_d(\alpha)\le r,\qquad d\in D,\\
\text{M3:}\quad&
\min_{\alpha\in\mathcal A} F(\alpha),\\
&G_d(\alpha)\le B_d,\qquad d\in D .
\end{aligned}
\]
Here $r=-b_{\min}$ and $B_d=-b_{\rm rhs}$, allowing day-specific targets.

\begin{proposition}[Posture invariance of the within-day rule]
\label{prop:ec-menu-posture}
\MV{Within the displayed fixed-schedule relaxation, in which capacity is priced
uniformly across rooms, surgeons, and days,} for any day with positive day-level
reliability price $\lambda_d$, every
interior surgery satisfies
$\sigma_j\varphi(\alpha_j^*)=\lambda_d$, equivalently
$\alpha_j^*=\varphi^{-1}(\lambda_d/\sigma_j)$, with
\[
\lambda_d=
\begin{cases}
w_\varepsilon/|D|, & \text{in M1},\\
\eta_d, & \text{in M2},\\
\xi_d, & \text{in M3}.
\end{cases}
\]
Here $\eta_d\ge0$, $\sum_d\eta_d=w_\varepsilon$, and
$\eta_d(G_d-r)=0$ in M2, while $\xi_d\ge0$ and
$\xi_d(G_d-B_d)=0$ in M3. Days with zero price are boundary cases: the
relaxation assigns no marginal value to additional reliability on those days,
so the interior rule does not apply.
\end{proposition}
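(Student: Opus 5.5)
The plan is to write the KKT conditions of each of the three fixed-schedule relaxations and observe that, for an interior surgery $j\in\mathcal J_d$, the stationarity condition in $\alpha_j$ always has the form
\[
\sigma_j\kappa'(\alpha_j^*)+\lambda_d\,g'(\alpha_j^*)=0 ,
\]
where $\lambda_d$ is whatever multiplies $G_d$ in the respective Lagrangian. Two facts make this work. First, $F$ is separable, so $\partial F/\partial\alpha_j=\sigma_j\kappa'(\alpha_j)$ regardless of the day. Second, $G_d$ involves only the surgeries of day $d$, with $\partial G_d/\partial\alpha_j=g'(\alpha_j)=1/(1-\alpha_j)$ for $j\in\mathcal J_d$ and zero otherwise. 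The box multipliers of $\mathcal A$ vanish at interior coordinates. Substituting $\kappa'$ and $g'$ exactly as in the proof of Proposition~\ref{prop:ec-menu-equi} then gives $\sigma_j\varphi(\alpha_j^*)=\lambda_d$. Since $\varphi$ is strictly decreasing (shown there), we can invert to obtain $\alpha_j^*=\varphi^{-1}(\lambda_d/\sigma_j)$ whenever $\lambda_d>0$.

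The remaining work is to identify $\lambda_d$ in each posture. M1 is unconstrained apart from the box, so its objective is $F+\frac{w_\varepsilon}{|D|}\sum_dG_d$ and differentiation gives $\lambda_d=w_\varepsilon/|D|$ directly. In M3 the Lagrangian is $F+\sum_d\xi_d(G_d-B_d)$ with $\xi_d\ge0$ and complementary slackness, so $\lambda_d=\xi_d$. In M2 the Lagrangian is
\[
F+w_\varepsilon r+\sum_d\eta_d\,(G_d-r).
\]
Stationarity in the free variable $r$ gives $\sum_d\eta_d=w_\varepsilon$. Stationarity in $\alpha_j$ gives $\lambda_d=\eta_d$, and $\eta_d\ge0$ with $\eta_d(G_d-r)=0$ is complementary slackness.

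The step I expect to need the most care is justifying that KKT multipliers exist, i.e.\ that the conditions are necessary at the optimum. This holds for two reasons. The box $\mathcal A$ is compact and the objectives are continuous, so optima exist. The constraints $G_d\le r$ and $G_d\le B_d$ are convex (Lemma~\ref{lem:ec-menu-convexity}: $g$ is convex), and the M2 constraints are linear in $r$. For M2, Slater's condition holds trivially by taking $r$ large. For M3 I would assume strict feasibility (some $\alpha\in\mathcal A$ with $G_d<B_d$ for all $d$), or else note that the constraints involve smooth functions of disjoint coordinate blocks with nonzero gradients, so LICQ holds at any active point. Necessity of KKT does not require convexity of $\kappa$, so restricting to $\bar\alpha\le3/4$ is needed only if sufficiency is also wanted.

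Finally, for days with $\lambda_d=0$, stationarity reads $\sigma_j\kappa'(\alpha_j)=0$, which is impossible because $\kappa'<0$. Hence no surgery on such a day can be interior: every $\alpha_j$ is pushed to $\bar\alpha_j$. This justifies the boundary-case remark in the statement and completes the plan.
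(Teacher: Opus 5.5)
Your plan is essentially the paper's proof: write the Lagrangian of each fixed-schedule relaxation, read off interior stationarity $\sigma_j\kappa'(\alpha_j^*)+\lambda_d/(1-\alpha_j^*)=0$, i.e.\ $\sigma_j\varphi(\alpha_j^*)=\lambda_d$, and identify $\lambda_d$ as $w_\varepsilon/|D|$, $\eta_d$ (with $\sum_d\eta_d=w_\varepsilon$ from stationarity in $r$), or $\xi_d$; then note that a zero price admits no interior solution. Your justification that the multipliers exist (Slater implying MFCQ, with no convexity of $\kappa$ needed) goes beyond the paper, which does not address it, but the LICQ alternative you offer for M3 is overstated: LICQ fails at points where every surgery of an active day sits at a box bound, so rely instead on your strict-feasibility route (or an MFCQ check on each day that has an interior surgery), which suffices because such a day has no interior surgery and M3 separates by day.
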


\proof{Proof.}
For $j\in\mathcal J_d$, $\partial W_d/\partial\alpha_j
=\sigma_j\kappa'(\alpha_j)$, $\partial G_d/\partial\alpha_j=1/(1-\alpha_j)$,
and $\kappa'(\alpha)=-1/\{2\alpha^{3/2}(1-\alpha)^{1/2}\}$.

\emph{M1.} Interior stationarity gives
$0=\sigma_j\kappa'(\alpha_j^*)
+(w_\varepsilon/|D|)/(1-\alpha_j^*)$, which rearranges to
$\sigma_j\varphi(\alpha_j^*)=w_\varepsilon/|D|$.

\emph{M2.} Let $\eta_d\ge0$ multiply $G_d(\alpha)-r\le0$. The Lagrangian is
$\sum_d W_d(\alpha)+w_\varepsilon r+\sum_d\eta_d\{G_d(\alpha)-r\}$.
Interior stationarity gives
$\sigma_j\varphi(\alpha_j^*)=\eta_d$; stationarity in $r$ gives
$\sum_d\eta_d=w_\varepsilon$, and complementarity gives
$\eta_d(G_d-r)=0$. Hence only worst-risk days can have positive price. If
$\eta_d=0$, the equation has no interior solution because $\sigma_j>0$ and
$\varphi(\alpha)>0$.

\emph{M3.} Let $\xi_d\ge0$ multiply $G_d(\alpha)-B_d\le0$. The Lagrangian is
$\sum_d W_d(\alpha)+\sum_d\xi_d\{G_d(\alpha)-B_d\}$.
Interior stationarity gives
$\sigma_j\varphi(\alpha_j^*)=\xi_d$, while complementarity gives
$\xi_d(G_d-B_d)=0$. If $\xi_d=0$, the same boundary logic applies. Since
$\varphi$ is strictly decreasing, every positive day price yields
$\alpha_j^*=\varphi^{-1}(\lambda_d/\sigma_j)$.
\Halmos
\endproof

\begin{remark}[Managerial reading]
The postures differ only in how they price days, not in how a priced day
allocates reliability across its surgeries.
\end{remark}

\subsubsection{Variance heterogeneity as an audit for menu value}

The menu is most valuable when the day is variance-heterogeneous. To illustrate
this mechanism in closed form, consider a second stylized relaxation in which a
planner has $S$ buffer minutes to allocate on a fixed day, with
$\sum_j b_j\le S$. The exact Cantelli tail contribution induced by buffer
$b_j$ is $\alpha_j(b_j)=\sigma_j^2/(\sigma_j^2+b_j^2)$. \MV{Here the day-level risk is the linear sum $\sum_j\alpha_j(b_j)$---the union-bound, dependence-robust budget of Section~\ref{sec:ec-bonferroni}, rather than the independence (log) budget used earlier in this subsection. At the small reliability levels on the menu the two nearly coincide ($-\log(1-\alpha)=\alpha+O(\alpha^2)$), so the allocation rule derived below is the same under either budget.} We use the convex
surrogate
\[
\bar\varepsilon_d(b):=\sum_j\frac{\sigma_j^2}{b_j^2}
\ge
\sum_j\frac{\sigma_j^2}{\sigma_j^2+b_j^2}.
\]
For one surgery, the inflation factor is
$(\sigma_j^2/b_j^2)/\{\sigma_j^2/(\sigma_j^2+b_j^2)\}
=1/(1-\alpha_j(b_j))$: it is about $1.111$ at $\alpha_j=0.10$ and about
$1.010$ at $\alpha_j=0.01$.

\begin{proposition}[Closed-form heterogeneity audit]
\label{prop:ec-menu-audit}
Let $S>0$ and $\sigma_j>0$ for $j=1,\ldots,m$. The solution of
$\min_{b>0}\{\bar\varepsilon_d(b):\sum_jb_j\le S\}$ is
\begin{equation}
\label{eq:ec-menu-audit-opt}
    b_j^*
    =
    \frac{S\sigma_j^{2/3}}{\sum_i\sigma_i^{2/3}},
    \qquad
    \bar\varepsilon_d^*
    =
    \frac{\left(\sum_{j=1}^m\sigma_j^{2/3}\right)^3}{S^2}.
\end{equation}
Relative to the optimal allocation in \eqref{eq:ec-menu-audit-opt}, define the
multiplicative loss of a feasible rule $b$ as
$\Gamma(b):=\bar\varepsilon_d(b)/\bar\varepsilon_d^*$.
Thus $\Gamma(b)=1$ means that the rule attains the optimal surrogate value,
whereas $\Gamma(b)=1.25$ means that it produces a 25\% larger surrogate
day-level failure bound using the same total buffer budget $S$. For the best
fixed-$\alpha$ rule, $b_j^{\rm fix}=S\sigma_j/\sum_i\sigma_i$, and for the
uniform-minutes rule, $b_j^{\rm unif}=S/m$. Their loss factors are
\[
\begin{aligned}
\Gamma_{\rm fix}
&:=
\frac{\bar\varepsilon_d(b^{\rm fix})}{\bar\varepsilon_d^*}
=
\frac{m(\sum_j\sigma_j)^2}
{(\sum_j\sigma_j^{2/3})^3},\\
\Gamma_{\rm unif}
&:=
\frac{\bar\varepsilon_d(b^{\rm unif})}{\bar\varepsilon_d^*}
=
\frac{m^2\sum_j\sigma_j^2}
{(\sum_j\sigma_j^{2/3})^3}.
\end{aligned}
\]
Both factors are at least one, with equality if and only if all variances are
equal.
\end{proposition}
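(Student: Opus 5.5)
The optimization problem is convex, so I would solve it through its KKT conditions and then compute the two loss factors directly. The objective $\bar\varepsilon_d(b)=\sum_j\sigma_j^2 b_j^{-2}$ is strictly convex and strictly decreasing in each coordinate on $b>0$, and the feasible set $\{b>0:\sum_j b_j\le S\}$ is convex. Slater's condition holds, so KKT is necessary and sufficient. Because the objective is strictly decreasing in every $b_j$, the budget must bind. Stationarity then gives $2\sigma_j^2 b_j^{-3}=\lambda$ for a single multiplier $\lambda>0$, so $b_j\propto\sigma_j^{2/3}$. Normalizing by $\sum_jb_j=S$ gives $b_j^*=S\sigma_j^{2/3}/\sum_i\sigma_i^{2/3}$. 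Substituting back, each term becomes $\sigma_j^2(\sum_i\sigma_i^{2/3})^2/(S^2\sigma_j^{4/3})=\sigma_j^{2/3}(\sum_i\sigma_i^{2/3})^2/S^2$, and summing over $j$ yields $\bar\varepsilon_d^*=(\sum_j\sigma_j^{2/3})^3/S^2$. Strict convexity makes this minimizer unique.

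Next I would justify the form of the two benchmark rules. A fixed-$\alpha$ rule uses $b_j=\kappa(\alpha)\sigma_j$, so its buffers are proportional to $\sigma_j$. Among such rules, the surrogate is decreasing in the common scale, so the best one spends the whole budget: $b_j^{\rm fix}=S\sigma_j/\sum_i\sigma_i$. The same argument shows that the best uniform-minutes rule also exhausts the budget, giving $b_j^{\rm unif}=S/m$. Plugging these in gives $\bar\varepsilon_d(b^{\rm fix})=\sum_j\sigma_j^2(\sum_i\sigma_i)^2/(S^2\sigma_j^2)=m(\sum_i\sigma_i)^2/S^2$ and $\bar\varepsilon_d(b^{\rm unif})=m^2\sum_j\sigma_j^2/S^2$. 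Dividing each by $\bar\varepsilon_d^*$, the $S^2$ factors cancel and leave the stated expressions for $\Gamma_{\rm fix}$ and $\Gamma_{\rm unif}$.

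The inequalities $\Gamma\ge1$ follow immediately from optimality, because both rules are feasible. The substantive step is the equality case. Since the minimizer is unique, $\Gamma(b)=1$ holds if and only if $b=b^*$. For the fixed rule this requires $\sigma_j\propto\sigma_j^{2/3}$, i.e.\ $\sigma_j^{1/3}$ constant. For the uniform rule it requires $\sigma_j^{2/3}$ constant. In both cases all $\sigma_j$ must be equal, and conversely equal $\sigma_j$ make the factors equal to one. As a cross-check that avoids invoking uniqueness, I would also verify the inequalities directly with Hölder or power-mean inequalities. Writing $a_j=\sigma_j^{2/3}$, the fixed-rule claim is $(\sum_j a_j)^3\le m(\sum_j a_j^{3/2})^2$, which is the power-mean inequality between orders $1$ and $3/2$. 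The uniform-rule claim is $(\sum_j a_j)^3\le m^2\sum_j a_j^3$, which is the power-mean inequality between orders $1$ and $3$. Both power-mean inequalities have equality exactly when all $a_j$ are equal.

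The main obstacle is being precise about what ``best fixed-$\alpha$'' means. The proof must argue that restricting to proportional allocations and spending the full budget is optimal within that class. The equality characterization is then routine given uniqueness.
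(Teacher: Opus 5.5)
Your proposal is correct and follows the paper's proof essentially step for step. Both use Lagrangian stationarity with a binding budget to get $b_j\propto\sigma_j^{2/3}$, uniqueness from strict convexity, direct substitution of the two benchmark rules, and power-mean inequalities for $\Gamma\ge1$ with equality iff all $\sigma_j$ coincide. The only difference is that you derive the bound and the equality case mainly from feasibility plus uniqueness of $b^*$, whereas the paper uses the chain $(m^{-1}\sum_j\sigma_j^{2/3})^3\le(m^{-1}\sum_j\sigma_j)^2\le m^{-1}\sum_j\sigma_j^2$ directly; your cross-check in $a_j=\sigma_j^{2/3}$ is that same chain, reparametrized.
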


\proof{Proof.}
Recall $\bar\varepsilon_d(b)=\sum_{j=1}^m \sigma_j^2/b_j^2$ with $b_j>0$ and
$\sum_{j=1}^m b_j\le S$. Each term $\sigma_j^2/b_j^2$ is decreasing in $b_j$, so
the budget is active at the optimum: $\sum_{j=1}^m b_j=S$. Form the Lagrangian
\[
L(b,\lambda)=\sum_{j=1}^m \frac{\sigma_j^2}{b_j^2}
+\lambda\Bigl(\sum_{j=1}^m b_j-S\Bigr).
\]
The first-order condition is
$\partial L/\partial b_j=-2\sigma_j^2/b_j^3+\lambda=0$, hence
$\lambda=2\sigma_j^2/b_j^3$, so $b_j^3=2\sigma_j^2/\lambda$ and
\[
b_j=\Bigl(\tfrac{2}{\lambda}\Bigr)^{1/3}\sigma_j^{2/3};
\]
the optimal buffers are proportional to $\sigma_j^{2/3}$. Imposing
$\sum_j b_j=S$ gives $(2/\lambda)^{1/3}\sum_{i=1}^m\sigma_i^{2/3}=S$, i.e.
$(2/\lambda)^{1/3}=S/\sum_{i=1}^m\sigma_i^{2/3}$, and therefore
\[
b_j^*=\frac{S\,\sigma_j^{2/3}}{\sum_{i=1}^m\sigma_i^{2/3}}.
\]
Substituting into the objective,
\[
\begin{aligned}
\bar\varepsilon_d(b^*)
&=\sum_{j=1}^m
\frac{\sigma_j^2}{\bigl(S\sigma_j^{2/3}/\sum_i\sigma_i^{2/3}\bigr)^2}\\
&=\frac{\bigl(\sum_i\sigma_i^{2/3}\bigr)^2}{S^2}\sum_{j=1}^m\sigma_j^{2/3}
=\frac{\bigl(\sum_{j=1}^m\sigma_j^{2/3}\bigr)^3}{S^2},
\end{aligned}
\]
the stated $\bar\varepsilon_d^*$. Because each map $b_j\mapsto\sigma_j^2/b_j^2$
is strictly convex on $b_j>0$, $\bar\varepsilon_d$ is strictly convex and this
first-order solution is the unique global optimum.

For the fixed-$\alpha$ rule, every surgery carries the same standardized
multiplier: the Cantelli buffer above the mean is $b_j=\sigma_j\kappa(\alpha)$
with $\kappa(\alpha)=\sqrt{(1-\alpha)/\alpha}$, so a common $\alpha$ gives
$b_j=c\,\sigma_j$ with common constant $c=\kappa(\alpha)>0$. Imposing the same
total budget, $\sum_{j=1}^m b_j=c\sum_{j=1}^m\sigma_j=S$, fixes
$c=S/\sum_{i=1}^m\sigma_i$, and therefore
$b_j^{\rm fix}=S\sigma_j/\sum_{i=1}^m\sigma_i$. Substituting,
$\bar\varepsilon_d^{\rm fix}=\sum_j\sigma_j^2/(b_j^{\rm fix})^2
=m(\sum_i\sigma_i)^2/S^2$; dividing by
$\bar\varepsilon_d^*$ gives $\Gamma_{\rm fix}$. For uniform minutes,
$b_j=S/m$, so
$\bar\varepsilon_d^{\rm unif}=m^2\sum_j\sigma_j^2/S^2$, and division by
$\bar\varepsilon_d^*$ gives $\Gamma_{\rm unif}$.

Finally, applying the standard mean inequality to the positive numbers
$\sigma_j$ gives
$(m^{-1}\sum_j\sigma_j^{2/3})^3\le(m^{-1}\sum_j\sigma_j)^2
\le m^{-1}\sum_j\sigma_j^2$, with equality if and only if all $\sigma_j$ are
equal. Rearranging yields the two lower bounds and their equality condition.
\Halmos
\endproof

\begin{remark}[Managerial reading]
The audit factors are computable from historical duration variances before
solving the full MILP: values near one indicate a nearly homogeneous day, for
which scalar buffer rules approximate the menu, while large values indicate that
the same total slack can be reallocated more effectively. Figure~\ref{fig:menu-dose-response}
of the main paper illustrates this across four dispersion profiles indexed by
this factor (their $\Gamma$ is the $\Gamma_{\rm fix}$ defined here): the menu's
certified advantage over the uniform policies widens as $\Gamma$ grows.
\end{remark}

}

\section{Robustness of the Cascade-Control Regression}
\label{ec:cascade-robustness}

This section reports robustness checks for the cascade-control regression of Section~\ref{sec:case-study-mechanism} of the main paper. The headline finding --- that the robust schedule attenuates the position-dependent build-up of propagation captured by $\beta_6 < 0$ --- holds under six alternative specifications.

(R1) Clustering standard errors at the room-day level (rather than instance) yields $F = 74.7$ ($p < 10^{-3}$); the substantive coefficients are identical.

(R2) Adding surgery-level controls for $\mu_j$ and $\sigma_j$ leaves $\beta_4$ and $\beta_6$ essentially unchanged.

(R3) Excluding the worst-calibrated instance (where the mean realized-to-planned ratio reaches 1.20) preserves the result.

(R4) Replacing the dependent variable with completion-time delay (realized end minus planned end) produces a similar pattern, with $\beta_6 = -0.034$ ($p < 10^{-3}$).

(R5) Allowing a quadratic in position adds no additional explanatory power: the cascade is approximately linear in position, and the linear interaction terms are stable.

(R6) An alternative specification that defines the upstream shock as $\max(0, X_i - p_{i})$ relative to each schedule's own planned duration --- and is therefore conflated with the buffer's surgery-level absorption effect --- shows the same position-dependent attenuation pattern ($\beta_5 = +0.057$, $\beta_6 = -0.057$, both $p < 10^{-3}$). The propagation rates themselves differ between the two specifications, as expected: under the buffer-relative measure, robust schedules show propagation rates near 1.0 at every position because by the time a residual shock survives the buffer, the day's slack has been exhausted --- which is itself informative about how aggressively the buffer absorbs shocks at the surgery level. The cascade-flattening result ($\beta_6 < 0$) is robust to either definition.

\section[Buffer Engines: Modularity and a Calibrated Plug-in Check]{\MV{Buffer-Engine Modularity and Held-Out Calibration}}
\label{ec:buffer-engines}

\MV{
A central design choice of Section~\ref{sec:uncertainty-model} is
that the MILP is buffer-agnostic: the optimization receives only the
buffered durations $\{p_{jrk}^{(t)}\}$ and is indifferent to how they
were produced. Cantelli is the engine used in the hospital case study
because it needs only $(\mu_{jrk},\sigma_{jrk})$ and is distribution
free over all laws with those moments. Wasserstein engines require
samples and a radius choice, but they fit the same interface. This
appendix therefore makes a narrow point: $W_\infty$ and $W_1$ are
retained as data-rich extensions of the buffer layer, not as a
validation.
}

\MV{
We verify the plug-in interface in a controlled synthetic experiment.
For each surgery we draw estimation, calibration, and held-out test
samples \GRN{($n_E=n_C=200$, $n_T=500$)} from the synthetic generator of
Section~\ref{sec:computational-experiments}. The schedule uses
$\Acal=\{0.01,0.05,0.10\}$ and the same M3 model with
$\varepsilon_{\rm target}=0.40$. The empirical-quantile engine uses the
estimation quantile directly. For each Wasserstein engine
$e\in\{W_\infty,W_1\}$ we choose, from a finite grid $G_e$ of candidate radii \MV{(in the synthetic experiments, expressed as dimensionless multiples of the estimated mean, $G_{W_\infty}=\{0,\allowbreak 0.005,\allowbreak 0.01,\allowbreak 0.02,\allowbreak 0.03,\allowbreak 0.05,\allowbreak 0.075,\allowbreak 0.10,\allowbreak 0.125,\allowbreak 0.15,\allowbreak 0.175,\allowbreak 0.20,\allowbreak 0.25,\allowbreak 0.30,\allowbreak 0.35,\allowbreak 0.40,\allowbreak 0.45,\allowbreak 0.50\}$ and $G_{W_1}=\{0,\allowbreak 0.001,\allowbreak 0.0025,\allowbreak 0.005,\allowbreak 0.01,\allowbreak 0.02,\allowbreak 0.03,\allowbreak 0.05\}$)},
a single radius
$\eta_{e}^{*}=\min\{\eta\in G_e:v_{e,t}(\eta)\le\alpha_t\ \text{for all }t\}$,
where $\alpha_t$ ranges over the menu $\Acal$ and $v_{e,t}(\eta)$ is the
\emph{calibration exceedance} --- \MV{the pooled, observation-weighted
fraction, across all surgery--room--surgeon cells, of} calibration-sample
observations that fall above the buffer engine $e$ produces at menu level
$\alpha_t$ with radius $\eta$\MV{; cells thus contribute in proportion to their
sample counts, so this criterion controls the aggregate exceedance rather than
each cell's individually}. \MV{Formally, writing $\mathcal V$ for the set of feasible surgery--room--surgeon cells, $N_C$ for the number of calibration draws per cell, $X^{(n)}_{jrk,C}$ for the $n$-th calibration observation in cell $(j,r,k)$, and $p^{(t)}_{jrk,e}(\eta)$ for the buffered duration engine $e$ produces at menu level $\alpha_t$ with radius $\eta$, we have $v_{e,t}(\eta)=\tfrac{1}{|\mathcal V|N_C}\sum_{(j,r,k)\in\mathcal V}\sum_{n=1}^{N_C}\mathbf 1\{X^{(n)}_{jrk,C}>p^{(t)}_{jrk,e}(\eta)\}$.}
That is, $\eta_e^*$ is the smallest grid
radius whose buffers meet the nominal target $\alpha_t$ at every menu level; if
no grid value qualifies, we use the largest radius and record the miss. The
test sample is used only after scheduling.
Concretely, the calibrated multiplier sets the ambiguity radius
$\rho_{jrk}=\eta_e^*\,\hat\mu_{jrk,E}$ for every cell --- proportional to the
estimated mean, so that $\eta$ is dimensionless across surgeries of different
size --- with $W_\infty$ shifting the empirical quantile by $\rho_{jrk}$ and
$W_1$ taking the worst-case tail buffer over the $W_1$ ball of radius
$\rho_{jrk}$ (Propositions~\ref{prop:Winf-buffer} and~\ref{prop:W1-tail}).
We stress that $\eta_e^*$ is fixed by held-out calibration to match nominal
coverage, \emph{not} by a measure-concentration or finite-sample guarantee on
the radius as in \citet{MohajerinKuhn2018}; this appendix demonstrates the
plug-in interface, not a certified ambiguity set.
The stationary regime uses one log-normal law throughout, while the
tail-drift regime draws estimation data from the baseline law and
calibration/test data from a mildly tail-contaminated future law, in which
each duration is independently inflated by a factor $U[1.25,1.75]$ with
probability $0.10$ and left unchanged otherwise. Thus
the design is intentionally limited: log-normal data make Cantelli
conservative by construction, and the drift is detectable because it
appears in the calibration sample. We use a single tight, high-variance
instance \GRN{design} --- 25 surgeries across 3 rooms and 4 surgeons, with a
large-case-heavy mix\GRN{, drawn at three seeds} --- that separates the engines strongly but solves only to
large MIP gaps, so we report \GRN{these} costs as incumbents rather than certified
optima.

\paragraph{Out-of-sample evaluation.} Each solved schedule is replayed on the
held-out test draws: within a room, each case starts at the later of its planned
start and its predecessor's realized finish, so delays propagate down the day.
From the replay we report, per engine, the selected-buffer exceedance $\Pr(X>p)$
(the fraction of test draws in which a case's realized duration exceeds its
assigned buffer at its selected~$\alpha$, pooled over all scheduled cases regardless of menu level); the \emph{day violation} rate (the
fraction of day--draw pairs in which at least one case on the day exceeds its
buffer); \emph{overtime} (the realized minutes a room's last case runs past the
day horizon, averaged over draws); and \emph{p95 delay} (the $95$th-percentile
case start delay, realized minus planned start, averaged over draws). The
\emph{buffer/case} column is the mean planned buffer above the \GRN{estimated ($E$-sample)} case mean.
}

\MV{
\paragraph{From samples to buffers.}
The left (buffer) columns of Table~\ref{ec:tab-appH-engines} show the buffers
averaged over all cases. Cantelli is roughly twice the true $1\%$ tail
\GRN{in the stationary regime (about $1.5\times$ under drift)};
the empirical quantile is cheapest but, computed on $E$, barely moves under
drift, while the calibrated $W_\infty$ and $W_1$ buffers inflate to track
the heavier tail seen in $C$.
}
\begin{table}[H]\centering
\caption[Buffer engines]{\MV{The four buffer engines on this instance\GRN{, with
single-$\eta$ calibration for $W_\infty$ and $W_1$} (3-seed mean). \emph{Left:} buffers (min), mean
over all cells, with the true $1-\alpha$ quantile for reference. \emph{Right:}
realized out-of-sample metrics; buffer/case is incumbent-based (large MIP gaps),
$\Pr(X{>}p)$ is the overall exceedance --- the fraction of case-realizations whose duration exceeds the assigned buffer, pooled over all selected menu levels --- overtime and delay in minutes.}}
\label{ec:tab-appH-engines}
\MV{\small\setlength{\tabcolsep}{4pt}
\resizebox{\textwidth}{!}{%
\begin{tabular}{ll rrr !{\vrule width 0.4pt} rrrrr}
\toprule
 \rowcolor{hdrgray}& & \multicolumn{3}{c!{\vrule width 0.4pt}}{Buffers (min)}
   & \multicolumn{5}{c}{Realized (3-seed mean)} \\
\cmidrule(lr){3-5}\cmidrule(lr){6-10}
\rowcolor{hdrgray}Regime & Engine & $\alpha{=}.01$ & $.05$ & $.10$
 & buffer/case & $\Pr(X{>}p)$ & day viol. & overtime & p95 del. \\
\midrule
\multirow{5}{*}{Stationary}
 & Cantelli            & 469.9 & 296.8 & 254.8 & 139.3 & 0.3\% &  1.5\% &  0.36 & 0.00 \\
 & Empirical quantile  & 242.1 & 216.4 & 202.4 &  83.7 & 1.5\% &  7.3\% &  2.21 & 0.06 \\
 & $W_\infty$          & 250.0 & 224.2 & 210.2 &  91.4 & 1.0\% &  4.9\% &  1.08 & 0.03 \\
 & $W_1$               & 272.0 & 228.9 & 211.4 & 113.1 & 0.3\% &  1.6\% &  0.41 & 0.00 \\
 & \textit{true quantile} & \textit{245.5} & \textit{216.2} & \textit{202.6} & & & & & \\
\addlinespace
\multirow{5}{*}{Tail drift}
 & Cantelli            & 469.3 & 296.5 & 254.5 & 140.4 & 2.4\% & 11.7\% & 13.77 & 0.29 \\
 & Empirical quantile  & 243.9 & 216.6 & 201.8 &  85.6 & 5.7\% & 25.6\% & 28.61 & 1.18 \\
 & $W_\infty$          & 319.4 & 292.0 & 277.2 & 138.7 & 1.7\% &  8.0\% &  7.16 & 0.18 \\
 & $W_1$               & 366.5 & 254.5 & 227.7 & 138.3 & 3.7\% & 17.4\% & 10.52 & 1.36 \\
 & \textit{true quantile} & \textit{312.6} & \textit{248.2} & \textit{221.3} & & & & & \\
\bottomrule
\end{tabular}}
}
\end{table}

\MV{
The experiment supports exactly the modularity claim: sample-based engines can
be calibrated before scheduling and inserted into the same $\alpha$-menu MILP
through $p_{jrk}^{(t)}$ without reformulation, and each is then validated out of
sample on identical instances. Because the interface is modular, the same
instance also lets us \emph{observe} how the engines behave. \MV{As the exceedance
columns of Table~\ref{ec:tab-appH-engines} show, under tail drift the empirical
quantile under-covers most, while the distribution-free Cantelli and the
calibrated Wasserstein engines keep exceedance lower ($W_\infty$ best).} \GRN{Day
violation rises in parallel; it is the realized out-of-sample analogue of the
day-reliability budget $\varepsilon\le 0.40$, and reflects both how much of that
budget a schedule spends and how well its buffers cover the realized durations.}

We report these as observations, not a ranking. This is a single synthetic
instance \GRN{design (three seeds)} solved only to incumbent (not certified) cost, so we make no dominance
claim between ambiguity sets, and the analysis does not replace the
Cantelli-based hospital study. Which ambiguity geometry and calibration rule best
convert a drift signal into out-of-sample coverage --- across instances, drift
regimes, and under genuine, unobserved nonstationarity --- is left to future work,
as is replacing our calibrated radius with a principled rule, whether
concentration-based \citep{MohajerinKuhn2018} or data-reweighted; the weighted
empirical framework of \citet{KeehanAndersonWiesemann2025} is a natural candidate
to plug into this same buffer layer.
}

\end{document}